\newcommand{\sL}{{L_*^\eps}}
\newcommand{\csL}{{\cL_*^\eps}}
\newcommand{\csLd}{{(\cL_*^\eps)^\dagger}}
\newcommand{\csLe}{{\cL_*^{\eps,\eta}}}
\font\tenronde=rsfs10
\font\sevenronde=rsfs7
\font\fiveronde=rsfs5
\def\ronde{\fam\rondefam\tenronde}
\newcommand{\sS}{{\ronde S}}
\newcommand{\scL}{{\ronde L}}
\newcommand{\scB}{{\ronde B}}
\def\eps{\varepsilon }
\newcommand{\vp}{\varphi}
\def\blockdiag{\text{\rm blockdiag}}
\renewcommand{\Re}{\text{\rm Re }}
\def\g{\gamma}
\newcommand{\RR}{{\mathbb R}}
\newcommand{\mA}{{\mathbb A}}
\newcommand{\PP}{{\mathbb P}}
\newcommand{\HH}{{\mathbb H}}
\newcommand{\MM}{{\mathbb M}}
\newcommand{\UU}{{\mathbb U}}
\newcommand{\VV}{{\mathbb V}}
\newcommand\cB{{\mathcal  B}} 
\newcommand\cU{{\mathcal  U}}
\newcommand\cV{{\mathcal  V}}
\newcommand\cR{{\mathcal  R}}
\newcommand\cK{{\mathcal  K}}
\newcommand\cL{{\mathcal  L}}
\newcommand\cN{{\mathcal  N}}
\newcommand\cF{{\mathcal  F}}
\newcommand\cT{{\mathcal T}}
\newcommand\cS{{\mathcal S}}
\newcommand{\mez}{{\frac{1}{2}}}
\newcommand{\rmu}{{\mathrm{u} }}
\def\eps{\varepsilon }
\def\blockdiag{\text{\rm blockdiag}}
\def\D{\partial }
\newcommand\adots{\mathinner{\mkern2mu\raise1pt\hbox{.}
\mkern3mu\raise4pt\hbox{.}\mkern1mu\raise7pt\hbox{.}}}
\newcommand{\Id}{{\rm Id }}
\newcommand{\im}{{\rm Im }\, }
\newcommand{\re}{{\rm Re }\, }
\newcommand{\la}{\langle }
\newcommand{\ra}{\rangle }
\newtheorem{theo}{Theorem}[section]
\newtheorem{prop}[theo]{Proposition}
\newtheorem{cor}[theo]{Corollary}
\newtheorem{lem}[theo]{Lemma}
\newtheorem{ass}[theo]{Assumption}
\newtheorem{rem}[theo]{Remark}
\numberwithin{equation}{section}
\newcommand\be{\begin{equation}}
\newcommand\ee{\end{equation}}
\newcommand\bp{\begin{pmatrix}}
\newcommand\ep{\end{pmatrix}}
\newcommand\ba{\begin{aligned}}
\newcommand\ea{\end{aligned} }
\newcommand\bea{\begin{array}}
\newcommand\ena{\end{array}}
\begin{document}

\title{Existence and sharp localization in velocity\\
of small-amplitude Boltzmann shocks}

\author{\sc \small Guy M\'etivier\thanks{
IMB, Universit\'e de Bordeaux, CNRS, IMB, 
33405 Talence Cedex, France; metivier@math.u-bordeaux.fr.:
G.M. thanks Indiana University for its hospitality during a
visit in which this work was partly carried out,
},
Kevin Zumbrun\thanks{Indiana University, Bloomington, IN 47405;
kzumbrun@indiana.edu:
K.Z. thanks the University of Bordeaux I and University of Paris XIII
for their hospitality during visits in which
this work was initiated and partly carried out.
Research of K.Z. was partially supported
under NSF grants number DMS-0070765 and DMS-0300487.
 }}

%%%%%%%%%%%%%%%%%%

\maketitle

\begin{abstract}
Using a weighted $H^s$-contraction mapping argument 
based on the macro-micro decomposition of Liu and Yu,
we give an elementary proof of existence,
with sharp rates of decay and distance from the Chapman--Enskog approximation,
of small-amplitude shock profiles of 
the Boltzmann equation with hard-sphere potential,
recovering and slightly sharpening results obtained by Caflisch and Nicolaenko
using different techniques.
A key technical point in both analyses
is that the linearized collision operator $L$ is negative definite
on its range,
not only in the standard square-root Maxwellian weighted norm for which
it is self-adjoint, but also in norms with nearby weights.
Exploring this issue further, we show that $L$ is negative
definite on its range 
in a much wider class of norms including norms with weights
asymptotic nearly to a full Maxwellian rather than its square root.
This yields sharp localization in velocity at near-Maxwellian 
rate, rather than the square-root rate obtained in previous analyses.
\end{abstract}

%\clearpage
\tableofcontents

%\clearpage
%%%%%%%%%%%%%%%%%
%\bigbreak

\section{Introduction}\label{intro}

In this paper, we study existence and structure
of small-amplitude shock profiles
\be\label{prof}
f(x,\xi,t)= \bar f(x-st,\xi),
\quad
\lim_{z\to \pm \infty} \bar f(z)=f_\pm
\ee
of the one-dimensional Boltzman equation
\be\label{Boltz}
f_t + \xi_1 \partial_x f= \tau^{-1} Q(f,f),
\ee
$x$, $t\in \RR$, 
where $f(x,t,\xi)\in \RR$ denotes the distribution of
velocities $\xi\in \RR^3$ at point $x$, $t$, 
$\tau>0 $ is the Knudsen number, and
\begin{equation}
\label{colop}
Q (g, h) := \int \big( g( \xi') h (\xi'_*)  -  g(\xi) h(\xi_*) \big) C(\Omega, \xi - \xi_*) d \Omega d\xi_* 
\end{equation}
is the collision operator, with
\begin{equation}\label{Omega}
\begin{aligned}
&\xi \in \RR^3, \quad \xi_* \in \RR^3, \qquad \Omega \in S^2, 
\\
& \xi' =  \xi + \big( \Omega \cdot  (\xi_* - \xi) \big) \Omega 
\\
& \xi'_* =  \xi_* -  \big( \Omega \cdot  (\xi_* - \xi) \big) \Omega.
\end{aligned}
\end{equation} 
and various collision kernels $C$.
Our main example is the hard sphere case, for which 
\begin{equation}\label{hs}
C (\Omega, \xi) = \big| \Omega \cdot \xi \big|. 
\end{equation}
See, e.g., \cite{Gl} for further details.

Note that $Q$ is in this case not symmetric. 
Other standard examples we have in mind are associated with
the class of hard cutoff potentials defined by Grad \cite{G},
as considered in \cite{CN}.
By small-amplitude, we mean that the density 
$$
\rho(x,t):=\langle 1 \rangle_f(x,t):=\int_{\RR^3} f(x,t,\xi)d\xi
$$
is confined within an $\eps_0$-neighborhood of some fixed reference
density $\rho_0>0$ for all $x$, $t$, for $\eps_0>0$ sufficiently small,
where, throughout our analysis, we have fixed
$$
\tau \equiv 1.
$$
Substituting \eqref{prof} into \eqref{Boltz}, we seek, equivalently,
stationary solutions of the {\it traveling-wave equation}
\be\label{Bode}
(\xi_1-s) \partial_x f=  Q(f,f).
\ee
By frame-indifference, we may without loss of generality take $s=0$.

Recall \cite{G,Gl,KMN,CN,LY} that the set of collision invariants
$\langle \psi \rangle $, that is linear forms such that 
$$
 \int_{\RR^3}\psi(\xi)Q(g, g)( \xi)d\xi \equiv 0  , 
$$
is spanned by
\begin{equation}
\label{defR}
Rf:=\langle \Psi\rangle_f  
= \int \Psi (\xi ) f( \xi ) d\xi   \in \RR^5 , \qquad 
\Psi (\xi ) = (1 ,\xi_1 , \xi_2 , \xi_3 , \mez | \xi|^2 )^T.
\end{equation}
Associated with these invariants are the macroscopic
fluid-dynamical variables 
\begin{equation}\label{fvar}
\rmu  := Rf=:
(\rho , \rho v_1 , \rho v_2 ,  \rho v_3 , \rho E )^T,
\end{equation} 
where $\rho$ is density, $v=(v_1,v_2,v_3)$ is velocity,
$E= e +  \mez | v|^2 $ is total energy density, and $e$ is internal energy
density.
Here, we are assuming that $f(x,t,\cdot)$ is confined to a space $\HH$
to be specified later such that the integral converges for  $f \in \HH$. 

Taking moments of \eqref{Boltz} and applying definition \eqref{fvar},
we find that the fluid variables obey the one-dimensional Euler equations 
\be\label{euler}
\ba
\rho_t + \D_x(\rho v_1)=0\\
(\rho v)_t + \D_x(v_1 \rho  v+ pe_1)=0\\
(\rho E)_t + \D_x(v_1 ( \rho E +  p))=0,\\
\ea
\ee
$e_1=(1,0,0)^T$ the first standard basis element,
where the new variable $p=p(f)$, denoting pressure, depends in general
on higher, non-fluid-dynamical moments of $f$.

The set of equilibrium states $Q(f,f)=0$ are exactly (see, e.g., \cite{Gl}) 
the Maxwellians 
\begin{equation}
M_\rmu(\xi)  = \frac{\rho}{\sqrt{(4\pi e/3)^3}}  
e^{-  \frac{| \xi - v |^2}{4 e/3} } . 
\end{equation}
Making the equilibrium assumption $f=M_\rmu$, we obtain a closed
system of equations for the fluid-dynamical variables consisting
of the one-dimensional Euler equations \eqref{euler}
with pressure $p=p(\rho,E)$ given
by the monatomic ideal gas equation of state
\be\label{eos}
p=(2/3)\rho E.
\ee
This corresponds to the zeroth-order approximation obtained
by formal Chapman-Enskog expansion about a Maxwellian state \cite{G,KMN},
where the expansion can be taken equivalently in powers of $\tau$,
or, as pointed out in \cite{L,MaZ1},
in powers of $k$, where $k$ is the frequency in $x$, $t$ of perturbations.
In the present context, it is the latter derivation that is relevant,
since 
(as we shall see better in a moment)
we seek slowly varying solutions near a constant, Maxwellian, state.

The next-, and presumably more accurate,
first-order Chapman-Enskog approximation yields the one-dimensional 
Navier--Stokes equations
\be\label{NS1}
\ba
\rho_t + \D_x(\rho u)=0\\
(\rho v)_t + \D_x(v_1 \rho  v+ p)=(\mu v_{x})_x\\
(\rho E)_t + \D_x(v_1 ( \rho E +  p))=
(\mu v_1 v_x)_x+
(\kappa T_x)_x ,\\
\ea
\ee
where temperature $T$ is related to internal energy by
$e = \frac{3}{2} RT$, $R$ the universal gas constant, and
\be\label{coeffs}
\mu=\mu(T)>0 \quad \hbox{\rm and }\; \kappa=\kappa(T)>0
\ee
are coefficients of viscosity and heat conduction.
In the hard sphere case, 
these may be computed explicitly as
$\mu(T)= (RT)^{1/2}\mu(1/R)$, %CHECKKKKKK 
$\kappa(T)= (RT)^{1/2}\kappa(1/R)$ % CHECKKKK
 (Chapman's formulae).
For derivations,
see, e.g., \cite{KMN}, Section 3.

By \eqref{euler}, the fluid-dynamical variables associated with
a traveling wave \eqref{prof} must satisfy
\be\label{fode}
\ba
-s \D_x \rho + \D_x(\rho v_1)=0\\
-s\D_x (\rho v) + \D_x(v_1 \rho  v+ pe_1)=0\\
-s\D_x (\rho E) + \D_x(v_1 ( \rho E +  p))=0,\\
\ea
\ee
hence, integrating from $x=-\infty$ to $x=+\infty$,
the {\it Rankine--Hugoniot conditions}
\be\label{RH}
s [\rho] = [\rho v_1],\quad
s [\rho v] = [v_1 \rho  v+ pe_1],\quad
s [\rho E] = [v_1 ( \rho E +  p)],
\ee
where $[h]:=h(f_+)-h(f_-)$ denotes change in $h$ across the shock.

Noting that endstates $f_\pm$ of \eqref{prof} by \eqref{Bode} necessarily
satisfy $Q(f,f)_\pm=0$, we find that they are Maxwellians
$f_\pm=M_{u_\pm}$, and so the associated pressures $p_\pm=p(f_\pm)$ 
are given by the ideal gas formula \eqref{eos},
recovering the standard fact that
endstates of a Boltzmann shock \eqref{prof} are Maxwellians
with fluid-dynamical variables corresponding
to fluid-dynamical shock waves of the Euler equations with monatomic ideal
gas equation of state \cite{G,CN}.
%TODO: put this back? maybe not...
% necessarily \cite{Sm,Se1,Se2} of Lax type
%\be\label{laxtype}
%|v_1(-\infty)
%v_1(+\infty)
%\ee

This gives a rigorous if straightforward connection between Boltzmann
shocks and their zeroth order Chapman--Enskog approximation.
The following, main result of this paper gives a rigorous connection
to the first-order Chapman--Enskog approximation given by
the Navier--Stokes equations \eqref{NS1}
in the limit as shock amplitude goes to zero.

Recall \cite{Gi}, for an ideal-gas equation of state \eqref{eos}
under assumptions \eqref{coeffs}, that for each pair of end-states
$u_\pm$ satisfying the Rankine--Hugoniot conditions \eqref{RH},
the Navier--Stokes equations \eqref{NS1} admit a unique up to
translation smooth traveling-wave solution
$$
u(x,t)=\bar u_{NS}(x-st), 
\quad
\lim_{z\to \pm \infty} \bar u_{NS}(z)=u_\pm,
$$
or Navier--Stokes shock.
Moreover, denoting shock amplitude by $\eps:=|u_+-u_-|$,
we have for $\eps>0$ sufficiently small the asymptotic description
\cite{Pe}
\begin{equation}\label{NSbds1}
|\partial_x^k (\bar u_{NS}-u_\pm)|\le C_k \eps^{k+1}e^{-\theta_k \eps|x|},
\quad x\gtrless 0,
\quad
C_k,\,  \theta_k>0, \quad \hbox{\rm all} \; k\ge 0.
\end{equation}

Up to this point in the discussion, we have made essentially no
assumption on the nature of the collision kernel $C(\Omega, \xi)$.
For the analysis of exact profiles, we require specific properties
of $C$.
For simplicity of exposition,
we specialize hereafter to the hard-sphere case \eqref{hs}.
As discussed in Section \ref{genpot}, the arguments extend 
to a more general class of kernels including the hard cutoff potentials of
Grad \cite{G}.
Then, our main result is as follows.

\begin{theo}\label{mainthm}
In the hard-sphere case \eqref{hs},
%For the class of collision kernels described in Section \ref{collision},
%including the hard cutoff potentials of \cite{G},
for any given fluid-dynamical reference state ${u_0}$ and $\eta>0$, 
there exist $\eps_0 > 0$,  $\delta_k > 0$, and $C_k>0$ such that 
for $|u_+-u_0|\le \eps_0$ and $\eps=|u_+-u_-| \le \eps_0$,   
the standing-wave equation \eqref{Bode} has a solution   
$\bar f$ satisfying for all $k\ge 0$
\begin{equation}\label{finalbds}
\begin{aligned}
\big|\partial_x^k (\bar u - \bar u_{NS})(x)\big|
&\le C_k \eps^{k+2}e^{-\delta_k  \eps|x|},\\
\big|\partial_x^k (\bar f- f_{\bar u_{NS}})(x,\xi)\big|
&\le C_k \eps^{k+2}e^{-\delta_k  \eps|x|}
M_{u_0}(\xi)^{1-\eta},\\
\big|\partial_x^k (\bar f- f_{\pm})(x,\xi)\big|
&\le C_k \eps^{k+1}e^{-\delta_k  \eps|x|}
M_{u_0}(\xi)^{1-\eta},\\
\end{aligned}
\end{equation}  
%%%%%% TODO     $ f_{\bar u_{NS}}$ not yet defined   $f_ {} = M_{} $ ?? 
where $\bar u:=R \bar f$ is the associated fluid-dynamical profile.
Moreover, up to translation, this solution is unique
among functions satisfying for $0\le k\le 2$, $c_k$ sufficiently small,
the weaker estimate
\begin{equation}\label{uniquebds}
\big|\partial_x^k (\bar f- f_{\bar u_{NS}})(x,\xi)\big|
\le c_k \eps^{k+1}e^{-\delta_k  \eps|x|}
M_{u_0}(\xi)^{\frac{1}{2}}.
\end{equation}  
\end{theo}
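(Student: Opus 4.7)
The plan is to linearize \eqref{Bode} about the Maxwellian $M_{\bar u_{NS}}$ built on the Navier--Stokes profile of the same Rankine--Hugoniot endstates and solve for the deviation $h:=\bar f - M_{\bar u_{NS}}$ by Banach fixed point in an $x$-exponentially and $\xi$-Maxwellian-weighted $H^s$ space. Substituting $\bar f=M_{\bar u_{NS}}+h$ into \eqref{Bode}, using $Q(M,M)=0$, and setting $Lh:=2Q(M_{\bar u_{NS}},h)$ yields
\begin{equation*}
(\xi_1-s)\partial_x h - L h = Q(h,h) - (\xi_1-s)\partial_x M_{\bar u_{NS}}.
\end{equation*}
The forcing $(\xi_1-s)\partial_x M_{\bar u_{NS}}$ is pointwise of size $\eps^2$, but its projection onto $\ker L$---the fluid-dynamical component---is only $O(\eps^3)$ because $\bar u_{NS}$ satisfies the Navier--Stokes profile equation, which supplies exactly the viscous correction that cancels the Euler-level part. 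We then look for $h$ of amplitude $\eps^2 e^{-\delta\eps|x|} M_{u_0}^{1-\eta}$, in line with \eqref{finalbds}.

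Next, I apply the Liu--Yu macro-micro decomposition $h=Ph+(I-P)h$, with $P$ the $L^2(M_{u_0}^{-1/2})$-orthogonal projection onto $\ker L$. Projecting onto $\ker L$ produces, to leading order, the linearized Navier--Stokes profile equation for the fluid moments $R h=R\,Ph$; after the change of variables $y=\eps x$, this is uniformly invertible by Pego's classical analysis at small Lax shocks, producing the $e^{-\delta\eps|x|}$ decay and absorbing the $\eps^{-1}$ Jacobian. Projection onto the range of $L$ gives a nondegenerate transport equation for $(I-P)h$, inverted using the strict dissipativity of $L$ on its range together with nondegeneracy of the multiplier $\xi_1-s$ off the hyperplane $\{\xi_1=s\}$, whose contribution is absorbed by the micro-projection.

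The central technical obstacle is the sharp velocity weight $M_{u_0}^{1-\eta}$, much stronger than the standard square-root weight $M_{u_0}^{1/2}$ in which $L$ is self-adjoint. For this I would invoke the generalized spectral-gap inequality advertised in the abstract: $\langle -Lg,g\rangle_w \ge c\|g\|_w^2$ on $\range L$, for weights $w$ essentially of the form $M_{u_0}^{-2(1-\eta)}$ and $\eta$ small. The companion ingredient is a Grad-type trilinear estimate $\|Q(f,g)\|_w \lesssim \|f\|_{H^s_w}\|g\|_{H^s_w}$ in the same weight, which survives because, for small $\eta$, the pointwise inequality $M_{u_0}^{1-\eta}(\xi)M_{u_0}^{1-\eta}(\xi_*)\le C M_{u_0}^{1-\eta}(\xi')M_{u_0}^{1-\eta}(\xi_*')\,e^{-c|\xi-\xi_*|^2}$ together with the boundedness of the hard-sphere kernel is enough to absorb all loss. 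These two estimates control the nonlinearity $Q(h,h)$ at size $\eps^4$ and the cross term at $\eps^3$, both well below the $\eps^2$ target radius.

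Packaging everything, the map $h\mapsto [(\xi_1-s)\partial_x-L]^{-1}\bigl[Q(h,h)-(\xi_1-s)\partial_x M_{\bar u_{NS}}\bigr]$ contracts on the ball $\{\|h\|\le C\eps^2\}$ of the weighted $H^s$ space, yielding existence; the higher-$k$ bounds in \eqref{finalbds} follow by differentiating the fixed-point equation and iterating with loss of one power of $\eps$ per derivative. For uniqueness within the weaker bound \eqref{uniquebds}, I would take two such solutions $\bar f_1,\bar f_2$, fix the translation freedom by matching one fluid moment at $x=0$, subtract, and rerun the same inversion, but now in the larger $M_{u_0}^{-1/2}$-weighted space where only the self-adjoint spectral gap is required: the smallness of $c_k$ then yields a contraction factor below $1$ and forces $\bar f_1=\bar f_2$.
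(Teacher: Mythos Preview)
Your outline has the right architecture---macro--micro decomposition, Pego-type inversion of the linearized Navier--Stokes profile operator for the fluid part, spectral gap for the micro part, contraction mapping---and these are indeed the ingredients the paper assembles.  But there is one concrete gap that prevents the iteration from closing, and one structural difference from the paper worth flagging.

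\medskip
\textbf{The approximate solution is one order too crude.}  You linearize about the local Maxwellian $M_{\bar u_{NS}}$.  The microscopic residual $(I-P)\bigl[(\xi_1-s)\partial_x M_{\bar u_{NS}}\bigr]$ is then only $O(\eps^2 e^{-\theta\eps|x|})$, not $O(\eps^3)$.  You correctly observe that the \emph{macroscopic} projection of the forcing is $O(\eps^3)$ thanks to the Navier--Stokes equation, but that is not enough: when you invert the linearized operator, the macroscopic estimate is obtained only \emph{after} substituting the micro solution back into the fluid equation (this is what produces the viscous term $b_*u'$), so the $\eps^{-1}$ loss from the reduced ODE hits the full source, micro part included.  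With an $O(\eps^2)$ micro residual you therefore get $\|Ph\|\sim \eps^{-1}\cdot\eps^2=\eps$, and then $Q(h,h)\sim\eps^2$ is the same size as the forcing: the contraction factor is $O(1)$, not small.  The paper's remedy (see Remark~\ref{correctorrmk}) is to linearize about the \emph{first-order} Chapman--Enskog approximant $\bar U_{NS}=(\bar u_{NS},\bar v_{NS})$ with $\bar v_{NS}=v_*(\bar u_{NS})+c_*(\bar u_{NS})\bar u_{NS}'$, which makes the integrated macro equation \emph{exact} and drives the micro residual down to $O(\eps^3)$; only then does the iteration close at radius~$\eps^2$.

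\medskip
\textbf{Strong weight directly vs.\ bootstrap.}  You propose to run the whole contraction in the near-Maxwellian weight $M_{u_0}^{-(1-\eta)}$.  The paper instead carries out the full argument in the square-root weight $\HH^{1/2}$, where $L$ is genuinely self-adjoint and the macro--micro splitting is orthogonal, and only afterwards bootstraps to $\HH^s$ using the already-established control of $P_\UU U$.  The reason (Remark~\ref{nonsa}) is that in $\HH^s$, $s>\tfrac12$, the projection $P_\UU$ is not orthogonal and $L$ is not self-adjoint, so the basic energy estimate produces $\|P_\UU U\|$ on the right rather than $\eps\|P_\UU U\|$; that missing $\eps$ is exactly what is needed to close the loop between the micro estimate and the macro ODE.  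Your plan would therefore also have to be split into an $\HH^{1/2}$ step followed by a bootstrap (or else supply a different mechanism for recovering the lost $\eps$ in the non-self-adjoint norm).
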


Existence of small-amplitude Boltzmann profiles was established some 
time ago in \cite{CN} for the full class of hard cutoff potentials, 
viewing them as bifurcations from the constant
Maxwellian solution $f\equiv M_{u_-}$, with the somewhat weaker existence
result
$$
\big|\partial_x^k (\bar f- f_{\bar u_{NS}})(x,\xi)\big|
\le C_k \eps^{k+2}
 e^{-\delta_k  \eps|x| - \tau_k  |x|^\beta } 
M_{u_0}(\xi)^{\frac{1}{2}},
$$
$0\le \beta \le 1$,
but also the somewhat stronger result of uniqueness among solutions satisfying
\begin{equation}\label{CNunique}
\big|\bar f- M_{u_{0}}(x,\xi)\big|
\le C \eps e^{-\delta_k  \eps|x| - \tau_k  |x|^\beta } 
M_{u_0}(\xi)^{\frac{1}{2}}
\end{equation}  
for $C>0$ bounded and $\eps>0$ sufficiently small.
For the hard sphere potential,
positivity of profiles, and the improved estimate \eqref{uniquebds}
were shown by Liu and Yu \cite{LY} 
by a ``macro-micro decomposition''
method in which fluid (macroscopic, or equilibrium) and transient
(microscopic) effects are separated and estimated by different techniques.
This was used in \cite{LY} to establish time-evolutionary stability of
profiles with respect to perturbations of zero fluid-dynamical mass,
$\int u(x)dx=0$, and thus,
{\it assuming the existence result of \cite{CN}}, 
to establish positivity of Boltzmann profiles by the positive maximum
principle for the Boltzmann equation \eqref{Boltz}
together with convergence to the Boltzmann profile of its
own Maxwellian approximation: by definition, a perturbation of zero
relative mass in fluid-dynamical variables.

The purpose of the present paper is to obtain existence
from first principles by an elementary argument in the spirit of \cite{LY},
based on approximate Chapman--Enskog expansion combined with
Kawashima type energy estimates \cite{K} (the macro--micro decomposition
of the reference),
but carried out for the {\it stationary} (traveling-wave) rather than the
time-evolutionary equations, and estimating the finite-dimensional
fluid part using sharp ODE estimates in place of the 
sophisticated energy estimates of \cite{LY}.\footnote{
See also \cite{Go,HuZ} in the fluid-dynamical case.}
In this latter part, we are much aided by the more favorable properties
of the stationary fluid equations, a rather standard boundary
value ODE system, as compared to the time-evolutionary equations, a 
hyperbolic--parabolic system of PDE.
This in a sense completes the analysis of \cite{LY}, providing by
a common set of techniques both existence (through the present argument)
and (through the argument of \cite{LY}) positivity.
At the same time it gives a truly elementary 
proof of existence of Boltzmann profiles.

For similar results in the general finite-dimensional relaxation case,
see \cite{MeZ1,MTZ}.
The key new technical observations needed for the infinite-dimensional case
are a way of choosing Kawashima compensators of finite rank 
(see Remark \ref{Kcomp}),
and the fact that the linearized collision operator
remains negative definite on its range 
not only in norms of square-root Maxwellian
weight where it is self-adjoint, but also in norms with nearby weights;
this allows coordinatization with respect to a single global Maxwellian,
avoiding unbounded commutators associated with a changing 
local Maxwellian frame.

In passing, we obtain also the new result of sharp localization in
velocity at near-Maxwellian rate \eqref{finalbds}, which comes
from improved estimates on the linearized collision operator independent
of the basic argument.
A key technical point in all three analyses-- \cite{CN}, \cite{LY},
and the present one--
is that the linearized collision operator $L$ is negative definite
on its range,
not only in the standard square-root Maxwellian weighted norm,
%not only in the standard square-root Maxwellian weighted norm 
%for which it is self-adjoint, 
but also in norms with nearby weights.
Exploring this issue further, we show that $L$ is negative
definite on its range 
in a much wider class of norms including norms with weights
asymptotic nearly to a full Maxwellian rather than its square root.
This observation, of interest in its own right, yields through the
same existence argument sharp localization in velocity at near-Maxwellian 
rate, rather than the square-root rate obtained in previous analyses.

%We present also an independent proof of positivity of profiles
%different from that of \cite{LY}, based on the positive
%maximum principle and discrete kinetic approximation, involving
%only the stationary and not the time-evolutionary equations.
%This has the interesting aspect of separating logically
%the issues of stability and existence.

Finally, we note that stability of 
small-amplitude Boltzmann shocks has been shown in \cite{LY} 
with respect to small $H^s$ perturbations with zero mass in
fluid variables.
It would be very interesting to continue along the same lines to
obtain a complete nonlinear stability result as in \cite{SX} 
or \cite{MaZ1,MaZ2},
with respect to general, not necessarily zero mass, perturbations.

%%%%%%%%%%%%%%%%%%%%%%%%%%%%
%%%%%%%%%%
%%%%%  SECTION 2  
%%%%%%%%%%%%
\section{The nonlinear collision operator}\label{collop}

We begin by a careful study of the collision operator.
Related results may be found, for example, in \cite{C,GPS}.

\subsection{Splitting of the collision operator}\label{splitting}
In view of definition \eqref{colop},  we split
\begin{equation}
\label{splitQ}
Q (g, h)  = Q_+ (g, h) - Q_- (g, h) 
\end{equation}
into gain and loss parts \cite{G,Gl}, where, for
$\Omega$ defined as in \eqref{Omega},
 \begin{equation}
 \label{defQ+}
Q_+ (g, h) (\xi) =  \int Q_\Omega (g, h) d \Omega,
\qquad
   Q_\Omega (g, h)  =  \int  
  g(\xi')  h (\xi'_*)  C (\Omega, \xi - \xi')   d \xi_*
 \end{equation} 
and
\begin{equation}
\label{defQ-}
Q_- (g, h )  = g (\xi)  \nu_h(\xi)  , 
\qquad 
\nu_h (\xi) = \int C (\Omega, \xi_* - \xi) h (\xi_*) d \xi_* d \Omega .
\end{equation}

%========================================

\subsection{Estimates for $Q_-$} 

In the hard sphere case \eqref{hs},
\begin{equation}
\nu_h (\xi) = \int C (\Omega, \xi_* - \xi) h (\xi_*) d \xi_*  d \Omega 
=  c  \int | \xi  - \eta)\vert  \  h (\eta) d \eta.
\end{equation} 
Here and elsewhere, denote $\la \xi \ra:=(1+|\xi|^2)^{1/2}$
following standard convention.

\begin{lem} 
In the hard-sphere case \eqref{hs},
for $h \ge 0$ with $\la \xi \ra h \in L^1$,  $\nu_h  $ 
is positive, continuous and 
\begin{equation}\label{-wt}
 \la \xi \ra \lesssim   \nu_h (\xi )   
\le  \la \xi \ra \big\| \la \eta \ra h  \big\|_{L^1}. 
\end{equation}
\end{lem}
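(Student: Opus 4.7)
The plan is to split the proof into three parts: the upper bound, continuity and pointwise positivity, and the lower bound $\nu_h(\xi)\gtrsim \langle\xi\rangle$. Before starting I would perform the $\Omega$-integration: for any fixed vector $v\in\RR^3$, $\int_{S^2}|\Omega\cdot v|\,d\Omega$ equals a universal constant times $|v|$, so that
$$
\nu_h(\xi)=c\int |\xi-\eta|\,h(\eta)\,d\eta
$$
with an explicit $c>0$, reducing all estimates to this one-variable convolution form.

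For the upper bound I would use the elementary inequality $|\xi-\eta|\le |\xi|+|\eta|\le \langle\xi\rangle\langle\eta\rangle$, which follows by squaring and rearranging to $(|\xi||\eta|-1)^2\ge 0$. Pulling $\langle\xi\rangle$ outside the integral then yields $\nu_h(\xi)\le c\,\langle\xi\rangle\|\langle\eta\rangle h\|_{L^1}$, which is the stated bound (up to the overall constant $c$). For continuity, the reverse triangle inequality $\bigl|\,|\xi_1-\eta|-|\xi_2-\eta|\,\bigr|\le |\xi_1-\xi_2|$ together with $h\in L^1$ gives the Lipschitz estimate $|\nu_h(\xi_1)-\nu_h(\xi_2)|\le c\|h\|_{L^1}|\xi_1-\xi_2|$. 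For pointwise positivity, once we note that the case $h\equiv 0$ is trivial, $h\ge 0$ and $h\not\equiv 0$ imply $h>0$ on a set of positive Lebesgue measure, and since $|\xi-\eta|>0$ for a.e.\ $\eta$ the integrand is strictly positive on a set of positive measure, giving $\nu_h(\xi)>0$.

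For the lower bound $\nu_h(\xi)\gtrsim \langle\xi\rangle$ I would split into large and bounded $|\xi|$. For large $|\xi|$, if $|\eta|\le |\xi|/2$ then $|\xi-\eta|\ge |\xi|/2$, hence
$$
\nu_h(\xi)\ge \frac{c|\xi|}{2}\int_{|\eta|\le |\xi|/2}h(\eta)\,d\eta.
$$
Choosing $R$ large enough (depending on $h$) that $\int_{|\eta|\le R/2}h\ge \mez\|h\|_{L^1}=:c_0/2>0$, we get $\nu_h(\xi)\gtrsim |\xi|\gtrsim \langle\xi\rangle$ uniformly on $\{|\xi|\ge R\}$. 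For $|\xi|\le R$, the function $\nu_h$ is continuous (by the Lipschitz estimate above) and strictly positive on the compact ball $\overline{B_R}$, hence bounded below by some $m>0$; since $\langle\xi\rangle\le\sqrt{1+R^2}$ there, one obtains $\nu_h(\xi)\ge m(1+R^2)^{-1/2}\langle\xi\rangle$. Taking the minimum of the two constants finishes the proof.

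The principal obstacle is the lower bound: unlike the upper bound, it is not purely algebraic and requires combining a large-$\xi$ argument, which uses only that $h$ has positive total mass, with a compactness argument on bounded sets based on continuity and strict positivity. The implicit constant in $\langle\xi\rangle\lesssim \nu_h(\xi)$ therefore depends on $h$ through both its $L^1$ mass and the rate of decay of its $L^1$ tails.
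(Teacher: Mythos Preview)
Your proof is correct and follows essentially the same two-regime strategy as the paper: a direct algebraic upper bound, and a lower bound obtained by treating large $|\xi|$ and bounded $|\xi|$ separately, the latter via continuity and strict positivity on a compact set. The only minor difference is that for large $|\xi|$ the paper integrates the reverse triangle inequality $|\xi-\eta|\ge |\xi|-|\eta|$ over all $\eta$ to get $\nu_h(\xi)\ge A|\xi|-B$ with $A=c\|h\|_{L^1}$ and $B=c\||\eta|h\|_{L^1}$, rather than restricting to $|\eta|\le|\xi|/2$; this is slightly more direct but otherwise equivalent to your argument.
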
 

\begin{proof}
Evidently,
    $$
    \nu_h (\xi) \le A |\xi |  +  B , \qquad  A = \| h \|_{L^1}, \quad
    B = \| |  \eta |  h \|_{L^1}. 
    $$
    This implies the upper bound.  Next, 
    $$
    \nu_h (\xi) \ge A | \xi |  - B 
    $$
    which implies the lower bound for $| \xi | >  (B +1) / 2A$. For $\xi$ bounded, the 
    integral is continuous and bounded from below. 
\end{proof} 

%=================================

 \subsection{Estimates for $Q_+$}
  
  Consider the Maxwellians
  \begin{equation}
  \label{refwei}
  \omega_s (\xi ) = e^{ - s | \xi |^2} 
  \end{equation}
  and the weighted $L^2$ spaces 
  $ \HH^s  = \omega_s L^2(\RR^3)$  with norm
  \begin{equation}
 \big\|   f \big\|_{\HH^s}  = \Big(\int e^{ 2 s | \xi |^2} | f(\xi) |^2 d \xi \Big)^\mez
  \end{equation} 
  We use similar definitions for spaces $\HH^s_1$ [resp.  
  $\HH^s_2$ ] of functions of one variable $\xi_1$ [resp. two variables 
  $(\xi_2, \xi_3)$].

  \begin{prop}
  \label{propestQ+1} 
In the hard-sphere case \eqref{hs},
for any $s> 0$,
  \begin{equation}\label{wt}
  \big\| \la \xi \ra^{- \mez}  Q_+(g, h) \big\|_{\HH^s}  \lesssim 
  \big\|   g  \big\|_{\HH^s}  \big\|  \la \xi \ra^\mez  h \big\|_{\HH^s} .
  \end{equation}
  %%%%% INTERCHANGED g / h 
    \begin{equation}\label{nowt}
  \big\|   Q_+(g, h) \big\|_{\HH^s}  \lesssim 
  \big\|   g \big\|_{\HH^s}  \big\|  \la \xi \ra h \big\|_{\HH^s} .
  \end{equation}
  \end{prop}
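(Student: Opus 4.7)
The plan is to transfer the Maxwellian weight from the norm into the integrand via the collisional energy invariant, then exploit the kinematics of the collision to factorize the $\xi$-integration, and finally handle the resulting angular integral by direct computation. I expect the geometric factorization of $\xi'$, $\xi'_*$ along directions parallel and perpendicular to $\Omega$ to be the main technical point.

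Setting $G := e^{s|\cdot|^2}g$ and $H := e^{s|\cdot|^2}h$, so that $\|g\|_{\HH^s} = \|G\|_{L^2}$ with analogous identities for the weighted norms, the energy conservation $|\xi'|^2+|\xi'_*|^2=|\xi|^2+|\xi_*|^2$ built into \eqref{Omega} will convert the problem into an unweighted $L^2$ bound:
$$e^{s|\xi|^2}Q_+(g,h)(\xi) = \int e^{-s|\xi_*|^2}\,G(\xi')\,H(\xi'_*)\,|\Omega\cdot(\xi-\xi_*)|\,d\xi_*\,d\Omega.$$
I will then apply Cauchy--Schwarz in the measure $d\xi_*\,d\Omega$: for \eqref{wt}, with the weight split symmetrically so that one factor is $\int e^{-s|\xi_*|^2}|\Omega\cdot(\xi-\xi_*)|\,d\xi_*d\Omega\lesssim \la \xi\ra$ (by $|\Omega\cdot v|\le|v|$ and the Gaussian in $\xi_*$) and the other is the remainder $B(\xi)$ carrying $|G|^2|H|^2$, giving $\|\la\xi\ra^{-\mez}e^{s|\cdot|^2}Q_+\|_{L^2}^2 \le \int B(\xi)\,d\xi$.

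To compute $\int B(\xi)\,d\xi$, I decompose $v := \xi-\xi_* = v_\parallel\Omega + v_\perp$ with $v_\perp \perp \Omega$. A direct unpacking of \eqref{Omega} yields
$$\xi' = \xi_* + v_\perp,\qquad \xi'_* = \xi_* + v_\parallel\Omega,$$
so $\xi'$ depends only on $v_\perp$ while $\xi'_*$ depends only on $v_\parallel$. Fubini then factorizes the $\xi$-integration into an $L^2$ integral of $G$ over the 2-plane $\xi_* + \Omega^\perp$ times a one-dimensional weighted integral of $H$ along the line $\xi_* + \RR\Omega$. Substituting $u = \xi_*^\parallel + v_\parallel$ and parametrizing $\zeta = u\Omega + \xi_*^\perp\in\RR^3$, after exchanging orders of integration the estimate reduces to a bound by $\|G\|_{L^2}^2$ times
$$\int|H(\zeta)|^2\,\Phi(\zeta)\,d\zeta,\qquad \Phi(\zeta) := \int_{S^2}\la\zeta\cdot\Omega\ra\,e^{-s(|\zeta|^2-(\zeta\cdot\Omega)^2)}\,d\Omega.$$
Spherical coordinates with axis $\zeta/|\zeta|$ will show that the exponential is supported in two polar strips of width $\sim|\zeta|^{-1}$, whose integral against $\la\zeta\cdot\Omega\ra\sim\la\zeta\ra$ yields $\Phi(\zeta)\lesssim \la\zeta\ra^{-1}$, closing \eqref{wt}.

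For \eqref{nowt} the same scheme applies with a different Cauchy--Schwarz splitting: one factor is the constant $\int e^{-s|\xi_*|^2}d\xi_*d\Omega$, so the remainder carries $|\Omega\cdot(\xi-\xi_*)|^2$. The same geometric decoupling is used; splitting $v_\parallel^2 = (u-\xi_*^\parallel)^2 \le 2u^2 + 2(\xi_*^\parallel)^2$, the resulting angular integrals $\int_{S^2}(\zeta\cdot\Omega)^2 e^{-s(|\zeta|^2-(\zeta\cdot\Omega)^2)}d\Omega$ and $\int_{S^2}e^{-s(|\zeta|^2-(\zeta\cdot\Omega)^2)}d\Omega$ are bounded by $1$ and $\la\zeta\ra^{-2}$ respectively, yielding \eqref{nowt}. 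The one genuine technical point throughout is the factorization $\xi' = \xi_* + v_\perp$, $\xi'_* = \xi_* + v_\parallel\Omega$: elementary but critical, since it is the mechanism by which the surviving Maxwellian decay $e^{-s|\xi_*^\perp|^2}$ transverse to $\Omega$ provides the angular concentration needed to absorb the linear growth of the hard-sphere kernel in $|\xi-\xi_*|$.
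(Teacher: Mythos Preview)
Your argument is correct and takes a genuinely different route from the paper.  The paper freezes $\Omega$, rotates so that $\Omega=(1,0,0)$, and proves the bound for each $Q_\Omega$ separately via the explicit product factorization $Q_\Omega(g,h)(\xi)=I_g(\xi)\,J_h(\xi_1)$; integrating over $S^2$ then gives the proposition with no gain from the angular average.  You instead keep $\Omega$ inside the Cauchy--Schwarz, use the coordinate-free decomposition $\xi'=\xi_*+v_\perp$, $\xi'_*=\xi_*+v_\parallel\Omega$ to factorize the $\xi$-integral, and postpone the $\Omega$-integration to the end, where it becomes the angular weight $\Phi(\zeta)$.  This exploits the concentration of $e^{-s(|\zeta|^2-(\zeta\cdot\Omega)^2)}$ near the poles and in fact yields a \emph{stronger} bound than \eqref{wt}: your computation gives $\Phi(\zeta)\lesssim\la\zeta\ra^{-1}$, hence $\|\la\xi\ra^{-\mez}Q_+(g,h)\|_{\HH^s}\lesssim\|g\|_{\HH^s}\|\la\xi\ra^{-\mez}h\|_{\HH^s}$, which trivially implies \eqref{wt}.  (The paper does use precisely this angular-averaging mechanism later, in Proposition~2.6, but not here.)  One cosmetic point: your phrase ``strips of width $\sim|\zeta|^{-1}$'' undersells the gain---the angular width is $|\zeta|^{-1}$ but the spherical \emph{measure} of each polar cap is $\sim|\zeta|^{-2}$ (because of the $\sin\theta$ Jacobian), which is what actually produces $\Phi\lesssim\la\zeta\ra\cdot\la\zeta\ra^{-2}=\la\zeta\ra^{-1}$.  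Also note that passing from $|v_\parallel|=|u-\xi_*^\parallel|$ to $\la\zeta\cdot\Omega\ra$ requires the split $|u-\xi_*^\parallel|\le|u|+|\xi_*^\parallel|$ (or equivalent), which you do spell out for \eqref{nowt} but leave implicit for \eqref{wt}; this is routine since the $|\xi_*^\parallel|$ term is absorbed by the residual Gaussian $e^{-s(\xi_*^\parallel)^2}$.
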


  We first estimate $Q_\Omega$ (see definition \eqref{defQ+})  for a fixed $\Omega \in S^2$. 
    \begin{lem}\label{Q1lem}
In the hard-sphere case \eqref{hs},
  for  $ s > 0$, 
  \begin{equation}
   \big\|  \la \xi \ra^{- \mez} Q_\Omega (g, h)  \big\|_{\HH^s  }  \le  C_s 
    \big\| g \big\|_{\HH^s}  \big\|  \la \xi \ra^\mez
  h \big\|_{\HH^s }
  \end{equation}
  and 
   \begin{equation}
   \big\|  Q_\Omega(g, h)  \big\|_{\HH^s  }  \le  C_s 
    \big\| g \big\|_{\HH^s}  \big\|  \la \xi \ra 
  h \big\|_{\HH^s }
  \end{equation}
  \end{lem}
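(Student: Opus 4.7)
The plan is to rotate to coordinates aligned with $\Omega$, use the resulting product structure of the post-collision velocities, and then apply Cauchy--Schwarz twice. For fixed $\Omega \in S^2$, I would write $\xi = \xi_\parallel \Omega + \xi_\perp$ with $\xi_\parallel \in \RR$ and $\xi_\perp \in \Omega^\perp \cong \RR^2$, and similarly decompose $\xi_*$. A direct computation from \eqref{Omega} gives
\begin{equation*}
\xi' = \xi_{*\parallel}\Omega + \xi_\perp, \qquad \xi'_* = \xi_\parallel\Omega + \xi_{*\perp},
\end{equation*}
so only the parallel components are exchanged. In particular, energy conservation $|\xi'|^2+|\xi'_*|^2 = |\xi|^2+|\xi_*|^2$ is immediate and the kernel simplifies to $|\Omega\cdot(\xi-\xi_*)| = |\xi_\parallel-\xi_{*\parallel}|$.

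Setting $u := e^{s|\cdot|^2}g$ and $v := e^{s|\cdot|^2}h$, so that $\|u\|_{L^2} = \|g\|_{\HH^s}$ and $\|v\|_{L^2} = \|h\|_{\HH^s}$, energy conservation moves the output weight $e^{s|\xi|^2}$ entirely to the $\xi_*$-side:
\begin{equation*}
e^{s|\xi|^2} Q_\Omega(g,h)(\xi) = \int e^{-s|\xi_*|^2}\, u(\xi')\, v(\xi'_*)\, |\xi_\parallel-\xi_{*\parallel}|\, d\xi_*.
\end{equation*}
The crucial structural fact is that $u(\xi')$ depends only on $(\xi_{*\parallel},\xi_\perp)$ and $v(\xi'_*)$ only on $(\xi_\parallel,\xi_{*\perp})$. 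Splitting $d\xi_* = d\xi_{*\parallel}\, d\xi_{*\perp}$ therefore factorizes the integral: with
\begin{equation*}
\tilde V(\xi_\parallel) := \int_{\Omega^\perp} e^{-s|\xi_{*\perp}|^2}\, v(\xi_\parallel\Omega+\xi_{*\perp})\, d\xi_{*\perp},
\end{equation*}
one finds
\begin{equation*}
e^{s|\xi|^2} Q_\Omega(g,h)(\xi_\parallel,\xi_\perp) = \tilde V(\xi_\parallel) \int_\RR e^{-s\xi_{*\parallel}^2}\, u(\xi_{*\parallel}\Omega+\xi_\perp)\, |\xi_\parallel-\xi_{*\parallel}|\, d\xi_{*\parallel}.
\end{equation*}

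Two Cauchy--Schwarz applications then finish the job. Symmetric splitting of $e^{-s|\xi_{*\perp}|^2}$ and $e^{-s\xi_{*\parallel}^2}$ gives $|\tilde V(\xi_\parallel)|^2 \le C_s \int |v(\xi_\parallel\Omega+\xi_{*\perp})|^2\, d\xi_{*\perp}$, and together with the elementary moment bound $\int e^{-s\xi_{*\parallel}^2}(\xi_\parallel-\xi_{*\parallel})^2\, d\xi_{*\parallel} \lesssim \langle\xi_\parallel\rangle^2$ yields the pointwise estimate
\begin{equation*}
\big| e^{s|\xi|^2} Q_\Omega \big|^2 \le C_s\, \langle\xi_\parallel\rangle^2\, |\tilde V(\xi_\parallel)|^2 \int e^{-s\xi_{*\parallel}^2}\, |u(\xi_{*\parallel}\Omega+\xi_\perp)|^2\, d\xi_{*\parallel}.
\end{equation*}

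To conclude, I would integrate against $\langle\xi\rangle^{-1} d\xi_\parallel\, d\xi_\perp$ for the first inequality, using $\langle\xi\rangle^{-1} \le \langle\xi_\parallel\rangle^{-1}$ to absorb one factor of $\langle\xi_\parallel\rangle$, and then apply Fubini to separate the $\xi_\parallel$-integral $\int \langle\xi_\parallel\rangle |\tilde V|^2 d\xi_\parallel$ from the joint $(\xi_{*\parallel},\xi_\perp)$-integral of $|u|^2$, which is bounded by $\|u\|_{L^2}^2 = \|g\|_{\HH^s}^2$. The monotonicity $\langle\xi_\parallel\rangle \le \langle\xi_\parallel\Omega+\xi_{*\perp}\rangle$ converts the surviving $\tilde V$-integral into $C_s \|\langle\xi\rangle^{1/2} h\|_{\HH^s}^2$. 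The second inequality is identical but without the $\langle\xi\rangle^{-1}$ factor, so both $\langle\xi_\parallel\rangle^2$'s remain and produce $\|\langle\xi\rangle h\|_{\HH^s}^2$. The constants depend on $s$ only through Gaussian moments and are uniform in $\Omega \in S^2$. The main obstacle is purely bookkeeping: distributing the Gaussian weights in each Cauchy--Schwarz step so that exactly half of each remains to multiply the residual $|u|^2$ or $|v|^2$ and to match the $\langle\xi\rangle$-powers demanded on the right-hand side; the geometric input that makes the whole factorization possible is the observation that in the $\Omega$-adapted frame the collision is simply a swap of one-dimensional components.
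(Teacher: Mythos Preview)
Your proof is correct and follows essentially the same route as the paper's: rotate to the $\Omega$-adapted frame so that the collision becomes a swap of one-dimensional components, use the resulting product structure $Q_\Omega(g,h)(\xi)=I_g(\xi)\,J_h(\xi_\parallel)$, and estimate each factor by Cauchy--Schwarz with Gaussian weights. The only difference is organizational: you invoke energy conservation at the outset to push the entire output weight $e^{s|\xi|^2}$ onto $e^{-s|\xi_*|^2}$ before factorizing, whereas the paper tracks the weights $e^{-s|\xi_1|^2}$ and $e^{-s(\eta_2^2+\eta_3^2)}$ separately in each factor; both lead to the same pointwise bound and the same final integration.
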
 
  
  \begin{proof}  Choose the 
  orthonormal basis 
  such that 
  $$
  \Omega = (1, 0, 0). 
  $$
  In this case 
  $$
  \xi' = (\xi_{* 1},  \xi_2, \xi_3) , \quad \xi'_* = ( \xi_1, \xi_{* 2}, \xi_{*3}) , 
  \quad  C (\Omega, \xi_* - \xi) = | \xi_1 - \xi_{*1}|. 
  $$
  and $Q_\Omega$ has the more explicit form 
  \begin{equation}
  Q_1 (g, h) (\xi) = 
  \int   g (\eta_1, \xi_2, \xi_3)   h (\xi_1, \eta_2, \eta_3) 
  | \xi_1 - \eta_1|  d \eta 
  \end{equation}
 %with $\check g $ and $\check h$ deduced from $g$ and $h$ respectively 
 % through the symmetry $\xi_1 \mapsto - \xi_1$. 
  In particular
  \begin{equation}
  Q_1 (g, h) (\xi)  =  I_g (\xi_1, \xi_2, \xi_3 )  J_h (\xi_1)
  \end{equation}
  with 
  \begin{eqnarray} 
  I_g (\xi_1, \xi_2, \xi_3) & = &\int   | \xi_1 - \eta_1|   \check g (\eta_1, \xi_2, \xi_3) d \eta_1 
  \\ 
  J_h ( \xi_1) &=& \int \check h (\xi_1, \eta_2, \eta_3) d\eta_2 d \eta_3. 
  \end{eqnarray}
  
Note that 
  $ 
  J(\xi_1)  =  \la \xi_1  \ra^{- \mez} 
 e^{ -   s | \xi_1|^2}  j (\xi_1) $ with 
 $$
 j (\xi_1) = \la  \xi_1 \ra^\mez
 \int e^{ - s (\eta_2^2 + \eta_3^2)} (1 + \xi_1^2 + \eta_2^2 + \eta_3^2) ^{ - \frac{1}{4}} H(\xi_1, \eta_2, \eta_3) d\eta_2 d \eta_3
 $$
 with 
 $ \| H \|_{L^2} = \| \la \xi \ra ^\mez h \|_{\HH^s}$. Thus, 
 $$ 
  \| j \|_{ L^2} \lesssim  \big\|  \la \xi \ra^\mez
  h \big\|_{\HH^s }.
  $$
Therefore,  
  $$
    \la \xi \ra^{-\mez} e^{  s | \xi |^2} Q_1 (g, h) (\xi) \le q(\xi) :=  j (\xi_1)  G(\xi) 
  $$
with 
  $$
G(\xi) =    \int   k (\xi_1, \eta_1) \tilde g  (\eta_1, \xi_2, \xi_3) d \eta_1 , 
\qquad 
k(\xi_1, \eta_1) :=  \frac{ | \xi_1- \eta_1| e^{ - s | \eta_1|^2}}{\la \xi_1 \ra } 
  $$
and $\| \tilde g \|_{L^2} \le \| g \|_{\HH^s}$.  Note that 
$$
\sup_{\xi_1}  \big\| k (\xi_1, \cdot ) \big\|_{L^2}  = C < \infty. 
$$
Thus 
$$
G (\xi_1, \xi_2, \xi_3)^2   \le C^2   \phi  (\xi_2, \xi_3) , \qquad \big\| \phi \big\|_{L^1} \le 
\big\| \tilde g \big\|^2_{L^2}
$$
It follows that 
$$
\big\| q \big\|_{L^2}  \le  C  \big\| j  \big\|_{L^2}  \big\| \tilde g \big\|_{L^2} . 
$$
The proof of the second estimate is similar
  \end{proof}

  Integrating over $\Omega $ on the unit sphere, implies Proposition~\ref{propestQ+1}.
  Combining the estimates for $Q_-$ and $Q_+$ implies:

  \begin{cor}\label{Qbd}
In the hard-sphere case \eqref{hs},
  for all $s$, there is  $C_s$ such that 
  \begin{equation}
  \big\| \la \xi \ra^{- \mez} Q (g, h) \big\|_{\HH^s}  \le  C_s 
  \big\| \la \xi \ra^{\mez} g \big\|_{\HH^s}   \big\| \la \xi \ra^{\mez} h \big\|_{\HH^s}  
  \end{equation}
  \end{cor}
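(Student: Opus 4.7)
The corollary bundles the gain and loss estimates together in a symmetric form. My plan is to split $Q = Q_+ - Q_-$ as in \eqref{splitQ} and bound each piece, controlling both by the symmetric right-hand side $\|\la\xi\ra^{\mez}g\|_{\HH^s}\|\la\xi\ra^{\mez}h\|_{\HH^s}$.

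For the gain term, the first estimate of Proposition \ref{propestQ+1} already gives
\[
\big\|\la\xi\ra^{-\mez}Q_+(g,h)\big\|_{\HH^s}\lesssim \|g\|_{\HH^s}\,\big\|\la\xi\ra^{\mez}h\big\|_{\HH^s},
\]
which is stronger than what is needed (only one of the factors carries the $\la\xi\ra^{\mez}$ weight), and is dominated by the right-hand side since $\|g\|_{\HH^s}\le \|\la\xi\ra^{\mez}g\|_{\HH^s}$.

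For the loss term, I would use the pointwise representation $Q_-(g,h)(\xi)=g(\xi)\,\nu_h(\xi)$ from \eqref{defQ-} together with the upper bound in \eqref{-wt}, which gives $\nu_h(\xi)\le \la\xi\ra\|\la\eta\ra h\|_{L^1}$ (applied to $|h|$; note the collision kernel is non-negative, so the bound extends to signed $h$ by writing $h=h_+-h_-$ or simply by dominating $|\nu_h|$ pointwise). Hence
\[
\big|\la\xi\ra^{-\mez}Q_-(g,h)(\xi)\big|\le \la\xi\ra^{\mez}|g(\xi)|\;\big\|\la\eta\ra h\big\|_{L^1},
\]
so taking the $\HH^s$-norm in $\xi$ yields
\[
\big\|\la\xi\ra^{-\mez}Q_-(g,h)\big\|_{\HH^s}\le \big\|\la\xi\ra^{\mez}g\big\|_{\HH^s}\;\big\|\la\eta\ra h\big\|_{L^1}.
\]
The remaining task is to dominate $\|\la\eta\ra h\|_{L^1}$ by $\|\la\eta\ra^{\mez}h\|_{\HH^s}$. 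By Cauchy--Schwarz,
\[
\big\|\la\eta\ra h\big\|_{L^1}
=\int \la\eta\ra^{\mez}e^{-s|\eta|^2}\cdot \la\eta\ra^{\mez}e^{s|\eta|^2}|h(\eta)|\,d\eta
\le \big\|\la\eta\ra^{\mez}e^{-s|\eta|^2}\big\|_{L^2}\,\big\|\la\eta\ra^{\mez}h\big\|_{\HH^s},
\]
and the first factor is a finite constant $C_s$ since $s>0$. Combining the two halves gives the claimed bound.

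There is no real obstacle here; the only nontrivial content lies upstream, in Proposition \ref{propestQ+1} for $Q_+$ and in the pointwise estimate \eqref{-wt} for $\nu_h$. The proof is essentially a bookkeeping exercise in matching weights: one must notice that the loss part forces a $\la\xi\ra^{\mez}$ on the $g$-factor (rather than on $h$ as for $Q_+$), which is why the final statement is written with the symmetric weight $\la\xi\ra^{\mez}$ on both $g$ and $h$ rather than with the asymmetric weights appearing in the individual estimates \eqref{wt}--\eqref{nowt}.
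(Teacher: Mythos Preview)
Your argument is correct and matches the paper's approach: the corollary is obtained simply by combining the gain estimate \eqref{wt} for $Q_+$ with the pointwise bound \eqref{-wt} for $\nu_h$ in $Q_-$, exactly as you do. The paper states only ``Combining the estimates for $Q_-$ and $Q_+$ implies'' the corollary, so your proof is the same route with the Cauchy--Schwarz step for $\|\la\eta\ra h\|_{L^1}$ made explicit.
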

  
  %%%%=====================================

  \subsection{Further estimates}
  
  \begin{prop}
In the hard sphere case \eqref{hs},
  suppose that $0 < s < s' $, and $g \in \HH^s$ and $h \in \HH^{s'}$. 
  Then 
  \begin{equation}
  \big\| Q_+ (g, h) \big\|_{\HH^s}  \le  C_{s, s'}  \big\| g  \big\|_{\HH^s}  
   \big\| h  \big\|_{\HH^{s'}} .   
  \end{equation}
  
  \end{prop}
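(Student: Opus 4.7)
The plan is to reduce this bound to the weighted estimate \eqref{nowt} of Proposition \ref{propestQ+1} by using the strictly stronger Maxwellian weight on $h$ to absorb the polynomial factor $\la \xi \ra$. The estimate \eqref{nowt} gives
\[
\|Q_+(g,h)\|_{\HH^s} \ls \|g\|_{\HH^s} \,\|\la \xi \ra h\|_{\HH^s},
\]
so it suffices to show that $\|\la \xi \ra h\|_{\HH^s} \ls_{s,s'} \|h\|_{\HH^{s'}}$.

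For this, I would write out the $\HH^s$-norm explicitly and factor the exponential weight:
\[
\int e^{2 s |\xi|^2} \la \xi \ra^2 |h(\xi)|^2 d\xi
\;=\; \int \bigl( \la \xi \ra^2 e^{-2(s'-s) |\xi|^2} \bigr) \, e^{2 s' |\xi|^2} |h(\xi)|^2 d\xi.
\]
Since $s' > s > 0$, the prefactor $\la \xi \ra^2 e^{-2(s'-s)|\xi|^2}$ is bounded on $\RR^3$ by a constant $M = M(s,s') < \infty$, so the right-hand side is at most $M \|h\|_{\HH^{s'}}^2$. This yields $\|\la \xi \ra h\|_{\HH^s} \le M^{1/2} \|h\|_{\HH^{s'}}$, and combining with \eqref{nowt} gives the claim with $C_{s,s'} = C_s M^{1/2}$.

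There is no serious obstacle here: the whole point is that any polynomial loss in $\xi$ on the $h$-factor can be traded against an arbitrarily small improvement in the Gaussian weight. In effect, the proposition is a corollary of \eqref{nowt} combined with the elementary embedding $\HH^{s'} \hookrightarrow \la \xi \ra^{-N} \HH^s$ for all $N \ge 0$ whenever $s' > s > 0$. The only mildly delicate point is to check that the constant $M$ really depends only on $s$ and $s'$, which is immediate from elementary calculus.
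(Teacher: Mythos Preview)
Your proof is correct, and it is genuinely simpler than the paper's. The paper does not invoke \eqref{nowt} but instead reopens the computation at the level of the fixed-direction operator $Q_\Omega$: writing $G = e^{s|\xi|^2}|g|$ and $H = e^{s'|\xi|^2}|h|$, it bounds $e^{2s|\xi|^2}|Q_\Omega(g,h)|$ pointwise by an integral carrying the prefactor $e^{(s-s')|\xi_1|^2}$, applies Cauchy--Schwarz, and then uses that $\la \xi_1\ra^2 e^{2(s-s')|\xi_1|^2}$ is bounded to conclude. In other words, the paper absorbs the polynomial loss \emph{inside} the $Q_\Omega$ estimate, in the $\xi_1$ variable only, whereas you absorb it \emph{after} the $Q_+$ estimate, in all three variables at once via the embedding $\HH^{s'}\hookrightarrow \la \xi\ra^{-1}\HH^s$. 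Your route is shorter because it reuses Proposition~\ref{propestQ+1} as a black box; the paper's route is more self-contained and makes explicit that only the $\xi_1$-component of the extra Gaussian decay is actually needed, which is mildly sharper information but not needed here.
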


  \begin{proof}   
  Introduce 
  $  G = e^{  s    | \xi |^2 } |   g (\xi ) | $ and 
    $H  = e^{   s' | \xi |^2 } |   h (\xi ) | $.  
  Then, when $\Omega = (1, 0, 0)^T$,  
  $$
  q_1 :=      e^{ 2 s | \xi |^2 } | Q_\Omega  (g, h)  (\xi)| 
  $$
  satisfies 
  $$
  q_1 (\xi) \le e^{ (s - s') | \xi_1|^2}  \int  G (\eta_1, \xi_2, \xi_3) H(\xi_1, \eta_2, \eta_3) 
  | \xi_1 - \eta_1|  e^{ - s | \eta |^2 } d \eta  .   
  $$

  By the Cauchy--Schwarz inequality, 
  $$
  | q_1 (\xi ) |^2 \lesssim  \la \xi_1 \ra^2 e^{ 2 (s - s') | \xi_1|^2}  
  \int | G (\eta_1, \xi_2, \xi_3) |^2   \ | H(\xi_1, \eta_2, \eta_3)|^2 d \eta. 
  $$
  where we have used that 
  $$
  \int   | \xi_1 - \eta_1|^2  e^{ - 2 s | \eta |^2 } d \eta \lesssim 
  \la \xi_1 \ra^2
  $$
  Since $s ' > s$, $  \la \xi_1 \ra^2 e^{ 2 (s - s') | \xi_1|^2} $ is bounded 
  and 
  $$
  \| q_1 \|_{L^2} \le \| G \|_{L^2} \| H \|_{L^2}. 
  $$
  Integrating over $\Omega$  implies the result. 
  \end{proof}

     \begin{prop}
In the hard-sphere case \eqref{hs},
  suppose that $0 < s  < s' $, and $g \in \HH^{s'}$ and $h \in \HH^{s}$. 
  Then 
  \begin{equation}
  \big\| Q_+ (g, h) \big\|_{\HH^s}  \le  C_{s, s'}  \big\| g  \big\|_{\HH^{s'}}  
   \big\| h  \big\|_{\HH^{s}} .   
  \end{equation}
  
  \end{prop}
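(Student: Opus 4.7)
The plan is to mirror the argument of the preceding proposition, with the two Maxwellian weight exponents interchanged. By Minkowski's inequality in the $\Omega$-integration together with rotation invariance of the weight $e^{s|\xi|^2}$, I reduce to the uniform single-$\Omega$ bound $\|Q_\Omega(g,h)\|_{\HH^s}\lesssim\|g\|_{\HH^{s'}}\|h\|_{\HH^s}$ for each $\Omega\in S^2$. Passing to the orthonormal frame in which $\Omega=(1,0,0)^T$, the factorization from the proof of Lemma~\ref{Q1lem} applies and gives
$$Q_\Omega(g,h)(\xi) = I_g(\xi_1,\xi_2,\xi_3)\,J_h(\xi_1),$$
with $I_g(\xi)=\int|\xi_1-\eta_1|g(\eta_1,\xi_2,\xi_3)\,d\eta_1$ and $J_h(\xi_1)=\int h(\xi_1,\eta_2,\eta_3)\,d\eta_2\,d\eta_3$.

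Next, I set $G:=e^{s'|\cdot|^2}|g|\in L^2(\RR^3)$ and $H:=e^{s|\cdot|^2}|h|\in L^2(\RR^3)$, so that $\|G\|_{L^2}=\|g\|_{\HH^{s'}}$ and $\|H\|_{L^2}=\|h\|_{\HH^s}$. Substituting these into the factored form, multiplying by the ambient weight $e^{s|\xi|^2}$, and rearranging the exponents (using $e^{-s'(\eta_2^2+\eta_3^2)}\le e^{-s(\eta_2^2+\eta_3^2)}$ when convenient) will produce the pointwise bound
$$e^{s|\xi|^2}|Q_\Omega(g,h)|(\xi) \le e^{(s-s')(\xi_2^2+\xi_3^2)}\int G(\eta_1,\xi_2,\xi_3)\,H(\xi_1,\eta_2,\eta_3)\,|\xi_1-\eta_1|\,e^{-s'\eta_1^2-s(\eta_2^2+\eta_3^2)}\,d\eta.$$
The key structural observation is that the residual Gaussian factor $e^{(s-s')(\xi_2^2+\xi_3^2)}$---which replaces the factor $e^{(s-s')\xi_1^2}$ of the preceding proposition---now decays in the \emph{transverse} variables $(\xi_2,\xi_3)$, reflecting the interchange of the heavy-weighted factor from $h$ to $g$. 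Because $G$ depends on the integration variables only through $\eta_1$ and $H$ only through $(\eta_2,\eta_3)$, the integral factors as a product; applying Cauchy--Schwarz to each factor (using $\int|\xi_1-\eta_1|^2e^{-2s'\eta_1^2}\,d\eta_1\lesssim\langle\xi_1\rangle^2$), squaring, and integrating in $\xi$, the tensor structure separates the $(\xi_2,\xi_3)$-integration---absorbed by the Gaussian prefactor $e^{2(s-s')(\xi_2^2+\xi_3^2)}$ into $\|G\|_{L^2}^2=\|g\|_{\HH^{s'}}^2$---from the $\xi_1$-integration, which must absorb the polynomial factor $\langle\xi_1\rangle^2$ against $\|H(\xi_1,\cdot)\|_{L^2_{\eta_2,\eta_3}}^2$ to deliver $\|H\|_{L^2}^2=\|h\|_{\HH^s}^2$. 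A final integration over $\Omega\in S^2$ then closes the bound for $Q_+$.

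The step I expect to be the main obstacle is precisely the absorption of the factor $\langle\xi_1\rangle^2$ in the $\xi_1$-integration. In the preceding proposition this was automatic, since the residual Gaussian from the weight mismatch sat on the critical variable $\xi_1$; here the analogous Gaussian sits on the transverse variables, so the $\xi_1$-direction does not benefit directly from the surplus. The fix is to reallocate a portion of the exponent gap $s'-s>0$ back onto the longitudinal direction by a sharper choice of Cauchy--Schwarz weighting in the $\eta_1$-integral: splitting $e^{-s'\eta_1^2}=e^{-(s'-s'')\eta_1^2}e^{-s''\eta_1^2}$ for some intermediate $s<s''<s'$ allows one to trade the polynomial $\langle\xi_1\rangle$ for a uniform bound at the cost of a harmless enlargement of the implicit constant $C_{s,s'}$. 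This reallocation is the step requiring the most care and where the argument departs from a literal mirror of the preceding proof.
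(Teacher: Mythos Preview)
Your reduction via Minkowski to a uniform single-$\Omega$ bound
\[
\|Q_\Omega(g,h)\|_{\HH^s}\lesssim \|g\|_{\HH^{s'}}\|h\|_{\HH^s}
\]
cannot succeed, and the proposed fix does not address the real obstruction. In the frame $\Omega=(1,0,0)^T$, the function $h$ is evaluated at $(\xi_1,\eta_2,\eta_3)$ and carries exactly the weight $e^{-s\xi_1^2}$ needed to match the target norm, while $g$ is evaluated at $(\eta_1,\xi_2,\xi_3)$ and carries no $\xi_1$-dependence at all. Thus there is \emph{no} Gaussian surplus in the longitudinal variable $\xi_1$ from either factor; the only $\xi_1$-dependence beyond the exact weight is the linear growth $|\xi_1-\eta_1|\sim\langle\xi_1\rangle$ from the hard-sphere kernel. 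Your splitting $e^{-s'\eta_1^2}=e^{-(s'-s'')\eta_1^2}e^{-s''\eta_1^2}$ only redistributes decay in the integration variable $\eta_1$; it cannot manufacture decay in $\xi_1$, and $\int|\xi_1-\eta_1|^2e^{-2s''\eta_1^2}d\eta_1\sim\langle\xi_1\rangle^2$ regardless of $s''$. For a fixed $\Omega$ the estimate is in fact false, so Minkowski over $\Omega$ is a dead end here.

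The paper's proof avoids this by applying Cauchy--Schwarz \emph{after} integrating over $\Omega$, so that the troublesome factor becomes
\[
A(\xi)=\int_{S^2}\!\!\int e^{2(s-s')(|\xi|^2-(\xi\cdot\Omega)^2)}\,|(\xi-\eta)\cdot\Omega|^2\,e^{-s|\eta|^2}\,d\eta\,d\Omega,
\]
and the crucial computation is that $\int_{S^2} e^{2(s-s')(|\xi|^2-(\xi\cdot\Omega)^2)}|\xi\cdot\Omega|^2\,d\Omega$ is bounded uniformly in $\xi$. The point is that for fixed $\xi$, the Gaussian factor $e^{2(s-s')(|\xi|^2-(\xi\cdot\Omega)^2)}$ is small except on the shrinking spherical cap where $\Omega$ is nearly parallel to $\xi$, and the smallness of that cap compensates the growth $|\xi\cdot\Omega|^2\sim|\xi|^2$ there; an explicit spherical-coordinates calculation gives the bound $\lesssim(s'-s)^{-1}$. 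This averaging over $\Omega$ is precisely what is unavailable once you commit to a single-$\Omega$ estimate, and is the genuinely new ingredient compared to the preceding proposition.
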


  \begin{proof}   
  Introduce 
  $  G = e^{    s' | \xi |^2 } |   g (\xi ) | $ and 
    $H  = e^{   s | \xi |^2 } |   h (\xi ) | $.  
  Then  
  $$
  q_\Omega  :=      e^{ 2 s | \xi |^2 } | Q_\Omega  (g, h)  (\xi)| 
  $$
  satisfies 
    $$
  q_\Omega  (\xi) \le e^{ (s - s') ( | \xi|^2 - (\xi \cdot \Omega)^2)}  \int  \Phi_\Omega (\xi, \eta)  
  | (\xi  - \eta) \cdot \Omega |  e^{ - s | \eta |^2 } d \eta    
  $$
where  for all $\Omega \in S^2$: 
$$
\big\| \Phi_\Omega \big\| _{L^2 (\RR^3 \times \RR^3) }  
\le  \big\| G \big\|_{L^2}   \big\| H \big\|_{L^2} 
$$
  Integrate over $\Omega$  and use the Cauchy--Schwarz inequality to get 
  $$
\big| e^{ 2s | \xi |^2} Q_+ (g, h) (\xi ) \big|^2 \le   \Big| \int q_\Omega (\xi) d \Omega \Big|^2 
  \le  A  \int | \Phi_\Omega (\xi, \eta) | ^2  d \eta d \Omega 
  $$ 
  with 
  $$
  A := \int  e^{ 2 (s - s') ( | \xi|^2 - (\xi \cdot \Omega)|^2)} 
    | (\xi  - \eta) \cdot \Omega |^2 e^{ - s | \eta |^2 } d \eta     d \Omega.
  $$
  Thus 
  $$
  A \lesssim     1  +   \int  e^{ 2 (s - s') ( | \xi|^2 - (\xi \cdot \Omega)|^2)} 
    |  \xi \cdot \Omega |^2     d \Omega.
  $$
   To compute the integral (for large $\xi$),  we can choose coordinates 
   such that 
   $\xi = (0, 0, t) $, $t > 0$, and parametrize the sphere with angular coordinates
   $\theta \in [0, 2 \pi]$ and $\vp \in [ - \mez \pi, \mez \pi]$, so that 
   $\Omega = (\cos \vp \cos \theta, \cos \vp \sin \theta, \sin\vp)$
   In this case the integral becomes 
   $$
  2 \pi  \int_{- \mez \pi}^{\mez \pi}  e^{ 2 (s - s') t^2 \cos^2 \vp}  t^2 \sin^2 \vp  \cos \vp \  d \vp    
   $$
   which is smaller than
      $$
      \begin{aligned}
  2 \pi  \int_{0}^{\mez \pi} & e^{ 2 (s - s') t^2 \cos^2 \vp}  2 t^2 \sin \vp  \cos \vp \  d \vp   
  \\
 &   2 \pi  \int_{0}^{1}  e^{ 2 (s - s') t^2 u  }   t^2  du  = 
     2 \pi  \int_{0}^{t^2}  e^{ 2 (s - s')  u  }     du  \le  \frac{2\pi}{s' - s}. 
  \end{aligned}
   $$
   Therefore $A$ is uniformly bounded and 
     $$
\int \big| e^{ 2s | \xi |^2} Q_+ (g, h) (\xi ) \big|^2 d \xi  
  \lesssim     \int | \Phi_\Omega (\xi, \eta) | ^2  d \eta d \Omega d \xi 
  \lesssim  \big\| G \big\|^2_{L^2}   \big\| H \big\|^2_{L^2} .
  $$
  \end{proof}

 \begin{cor}
 \label{factionQ}
In the hard-sphere case \eqref{hs},
  suppose that $0 < s < s' $.  and $f \in \HH^s$.
  Then for $a \in \HH^{s'}$ the mappings 
  $f \mapsto Q _- (a, f) $,  $f \mapsto Q _+(a, f) $ and 
  $f \mapsto Q _+(f , a ) $ are bounded from 
  $\HH^s$ to $\HH^s$, with norm controlled by a constant times 
  $\| a \|_{\HH^{s'}}$. 
 \end{cor}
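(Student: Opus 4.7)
The plan is to read each of the three bounds off directly from the results already established, after matching $a$ to the correct slot in each preceding estimate; the strict inequality $s<s'$ is used in one place to convert a polynomial weight on $a$ into an exponential weight by absorbing $\la\xi\ra$ into $e^{(s-s')|\xi|^2}$.

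For $Q_+(a,f)$ the map is the $g$-slot with $a\in\HH^{s'}$ and the $h$-slot with $f\in\HH^s$, so the second of the ``Further estimates'' propositions applies and gives
$$
\big\| Q_+(a,f)\big\|_{\HH^s} \ \lesssim\ \|a\|_{\HH^{s'}}\,\|f\|_{\HH^s}.
$$
For $Q_+(f,a)$ the roles are reversed and the first ``Further estimates'' proposition applies to yield the symmetric bound. In both cases the constant depends only on $s$ and $s'$, so no further argument is needed.

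The only piece requiring a short calculation is $Q_-(a,f)$. By definition \eqref{defQ-}, $Q_-(a,f)(\xi)=a(\xi)\,\nu_f(\xi)$, so the task reduces to bounding $\nu_f$ pointwise in terms of $\|f\|_{\HH^s}$ and then controlling the product. The lemma of Section 2.2 gives $\nu_f(\xi)\lesssim\la\xi\ra\,\|\la\eta\ra f\|_{L^1}$, and by Cauchy--Schwarz
$$
\|\la\eta\ra f\|_{L^1}\ \le\ \big\|\la\eta\ra e^{-s|\eta|^2}\big\|_{L^2}\,\|f\|_{\HH^s}\ \lesssim\ \|f\|_{\HH^s}.
$$
Hence $|\nu_f(\xi)|\lesssim\la\xi\ra\,\|f\|_{\HH^s}$, and therefore
$$
\big\|Q_-(a,f)\big\|_{\HH^s}\ \lesssim\ \big\|\la\xi\ra\,a\big\|_{\HH^s}\,\|f\|_{\HH^s}.
$$
Finally, since $s<s'$, the elementary inequality $e^{s|\xi|^2}\la\xi\ra\ =\ e^{(s-s')|\xi|^2}\la\xi\ra\,e^{s'|\xi|^2}\ \le\ C_{s,s'}\,e^{s'|\xi|^2}$ gives $\|\la\xi\ra a\|_{\HH^s}\lesssim\|a\|_{\HH^{s'}}$, completing the estimate.

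There is no real obstacle; the whole corollary is a bookkeeping consequence of the previous propositions, and the only mildly non-trivial point is recognizing that the strict gap $s<s'$ is precisely what allows the polynomial loss in the $Q_-$ bound to be swallowed by the stronger Maxwellian weight on $a$.
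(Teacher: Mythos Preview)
Your proof is correct and follows exactly the route the paper intends: the corollary is stated without proof because it is a direct consequence of the two ``Further estimates'' propositions for the $Q_+$ terms and of the pointwise bound on $\nu_f$ for the $Q_-$ term, with the gap $s<s'$ absorbing the $\la\xi\ra$ factor. One tiny point: the lemma on $\nu_h$ in Section~2.2 is stated for $h\ge 0$, but the upper bound you use holds for general $f$ by replacing $f$ with $|f|$ in the integrand, which you might note explicitly.
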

\bigbreak

\begin{rem}
\label{remgenmaxw}  
\textup{The estimates above were proved for convenience for the Gaussian weights
$\omega = e^{  | \xi |^2}$ and $\omega^s$. They immediately extend to 
any Maxwellian weight  $M_{\rmu}$  and $M_\rmu^s$. }
\end{rem}

%%%%%%%%%%%%%%%%%
%%%%%%%%%%%%%%%%%
%%%%%%%%%%%%%%%%%
%
%  SECTION 3
%
%

\section{The linearized collision operator}\label{lincollop}
We next study the linearized collision operator about a Maxwellian
or nearby velocity distribution.
Fix a reference state $\underline \rmu$. The associated Maxwellian 
$M_{\underline \rmu} $ is denoted by $\underline M$.

For $s \in ]0 , 1 ]$,  let 
  $\HH^s$  denote the space of functions $f $ on $\RR^3$  such that 
 \begin{equation}
 \label{norms}
\big\|  f \big\|^2_{\HH^s} =   \int \underline M(\xi)^{- 2 s} | f(\xi)|^2 d\xi < + \infty. 
 \end{equation}
 Note that $\underline M \in \HH^s$ for all $s < 1$. 
 The space $\HH^\mez$  plays a particular role as it will be clear below. 
%TODO: wording?

\begin{prop}
The quadratic mapping $f \mapsto Q (f, f)$ is continuous from 
$\la \xi \ra^{- \mez} \HH^s$ to $\ker R \cap (\la \xi \ra^{\mez}\HH^s)$, 
where $R$ is the operator \eqref{defR} defining the thermodynamical  variables. 
. 
\end{prop}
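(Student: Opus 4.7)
The statement decomposes into two independent claims: (i) a bilinear continuity estimate showing that $Q(f,f)$ lies in $\langle \xi \rangle^{1/2}\HH^s$ with a quadratic bound in $\|f\|_{\langle\xi\rangle^{-1/2}\HH^s}$, and (ii) the algebraic fact that $RQ(f,f)=0$. My plan is to dispatch each separately, drawing on what has already been established.

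For claim (i), I would simply invoke Corollary~\ref{Qbd} (the combined $Q_\pm$ estimate), which reads
\[
\big\|\langle\xi\rangle^{-1/2} Q(g,h)\big\|_{\HH^s}\;\lesssim\;\big\|\langle\xi\rangle^{1/2} g\big\|_{\HH^s}\big\|\langle\xi\rangle^{1/2} h\big\|_{\HH^s},
\]
and specialize to $g=h=f$. The only caveat is that Corollary~\ref{Qbd} was stated for the Gaussian weights $\omega_s=e^{-s|\xi|^2}$, whereas the norm \eqref{norms} in the proposition uses the Maxwellian $\underline M$ of the reference state. This gap is closed by Remark~\ref{remgenmaxw}, which notes that all of the $Q_+,Q_-$ estimates extend verbatim to arbitrary Maxwellian weights $M_{\rmu}^s$ by an affine change of velocity variables (centering at the mean and rescaling by the temperature). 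Thus $Q$ is a bounded symmetric bilinear map from $\langle\xi\rangle^{-1/2}\HH^s$ to $\langle\xi\rangle^{1/2}\HH^s$, which is equivalent to the continuity of the quadratic form $f\mapsto Q(f,f)$ (Lipschitz continuity on bounded sets follows from $Q(f,f)-Q(\tilde f,\tilde f)=Q(f-\tilde f,f)+Q(\tilde f,f-\tilde f)$ together with the bilinear estimate).

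For claim (ii), I would argue by the standard collision-invariant computation recalled around \eqref{defR}. Writing
\[
RQ(f,f)=\int \Psi(\xi)\,Q(f,f)(\xi)\,d\xi,
\]
symmetrizing the integrand under the exchanges $\xi\leftrightarrow\xi_*$ and $(\xi,\xi_*)\leftrightarrow(\xi',\xi'_*)$, and using that for the hard-sphere kernel both the kernel $C(\Omega,\xi-\xi_*)$ and the Jacobian are invariant under these exchanges while $\Psi(\xi)+\Psi(\xi_*)=\Psi(\xi')+\Psi(\xi'_*)$ by conservation of mass, momentum and energy, one obtains $RQ(f,f)=0$. Integrability of $\Psi(\xi)Q(f,f)(\xi)$ is automatic: from claim~(i), $Q(f,f)\in\langle\xi\rangle^{1/2}\HH^s$ decays like $\underline M^{s}\langle\xi\rangle^{1/2}$ in an $L^2$ sense, and $\Psi$ has only quadratic growth, so $\Psi\,Q(f,f)\in L^1$ by Cauchy--Schwarz against $\underline M^{s}\langle\xi\rangle^{C}$.

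The only step requiring any care is the justification of the Fubini/symmetrization manipulation in (ii) for $f$ merely in $\langle\xi\rangle^{-1/2}\HH^s$ rather than Schwartz class. This is handled by an approximation argument: truncate $f$ to Schwartz functions $f_n\to f$ in $\langle\xi\rangle^{-1/2}\HH^s$, apply the classical invariance identity for $f_n$, and pass to the limit using the continuity from (i) to conclude $RQ(f_n,f_n)\to RQ(f,f)$. This is the only place where a small technical obstacle arises, but it is routine. Combining the two claims gives the proposition.
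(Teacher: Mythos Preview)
Your proposal is correct and follows essentially the same approach as the paper: part (i) via Corollary~\ref{Qbd} together with Remark~\ref{remgenmaxw}, and part (ii) via the classical collision-invariant identity $RQ(f,f)=0$. The paper's proof simply cites the latter as a known property without spelling out the symmetrization or the density argument you supply, so your version is if anything more detailed.
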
 

\begin{proof} 
The action from $\la \xi \ra^{- \mez} \HH^s $ to $ \la \xi \ra^{\mez}\HH^s$  is 
a consequence of  Corollary~ \ref{Qbd} and Remark~\ref{remgenmaxw}. 
That the image is contained in 
$\ker R$ follows from the known properties of the collision operator: 
\begin{equation}
\forall f \in \HH^s : \qquad R Q (f, f) = 0. 
\end{equation}
\end{proof}

Given a    function  $a $,   the linearized collision operator  at $a$ is
  \begin{equation}
 L_a  g  = Q (a , g) + Q (g, a).
\end{equation}
In particular, we consider first the linearized operator at $a = \underline M$: 
  \begin{equation}
 \underline L  g  = Q'_{\underline M} g = Q (\underline M , g) + Q (g, \underline M).
\end{equation}
Corollary~\ref{factionQ} implies the following result: 

\begin{lem}
\label{lemaction uL}
For all $ s \in [\mez, 1[$, $\underline L$ is a bounded linear operator
from $\la \xi \ra^{- \mez} \HH^s$ to  $\ker R \cap \la \xi \ra^{\mez } \HH^s$ 
and from $\la \xi \ra^{- 1} \HH^s$ to  $ \ker R \cap  \HH^s$. 
\end{lem}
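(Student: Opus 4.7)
The plan is to write $\underline L g = Q(\underline M, g) + Q(g, \underline M)$, further split each summand into gain and loss parts via \eqref{splitQ}, and bound each resulting piece in the required weighted space using the estimates of Section~\ref{collop}. The kernel property $R\underline L g = 0$ will be disposed of at the outset by polarizing the identity $RQ(f,f)=0$ at $f = \underline M + \varepsilon g$ and collecting the order-$\varepsilon$ term. Throughout, I would use that $\la\xi\ra^k \underline M \in \HH^{s'}$ for every $s' < 1$ and every $k \ge 0$; this is where the restriction $s < 1$ enters.

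For the first mapping, from $\la\xi\ra^{-\mez}\HH^s$ into $\la\xi\ra^{\mez}\HH^s$, I would apply Corollary~\ref{Qbd} (in the Maxwellian-weight form given by Remark~\ref{remgenmaxw}) to both $Q(\underline M, g)$ and $Q(g, \underline M)$, obtaining
\[
\big\|\la\xi\ra^{-\mez} Q(\underline M, g)\big\|_{\HH^s} + \big\|\la\xi\ra^{-\mez} Q(g, \underline M)\big\|_{\HH^s} \lesssim \big\|\la\xi\ra^{\mez}\underline M\big\|_{\HH^s}\, \big\|\la\xi\ra^{\mez} g\big\|_{\HH^s},
\]
the prefactor being finite since $\int \la\xi\ra\, \underline M^{2-2s}\,d\xi < \infty$ for $s < 1$. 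This half is essentially immediate.

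For the sharper mapping from $\la\xi\ra^{-1}\HH^s$ into $\HH^s$, I would split $\underline L g$ into the four pieces $Q_+(\underline M, g)$, $Q_-(\underline M, g)$, $Q_+(g, \underline M)$, $Q_-(g, \underline M)$. Three of them---$Q_+(\underline M, g)$, $Q_-(\underline M, g)$, and (despite the asymmetry of $Q$) $Q_+(g, \underline M)$---are covered simultaneously by Corollary~\ref{factionQ} applied with $a = \underline M$ and any $s' \in (s, 1)$: each defines a bounded operator on $\HH^s$ with norm $\lesssim \|\underline M\|_{\HH^{s'}}$, and thus contributes at most $\|g\|_{\HH^s} \le \|\la\xi\ra g\|_{\HH^s}$. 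The remaining loss term $Q_-(g, \underline M) = g\,\nu_{\underline M}$ is handled directly by the pointwise bound $\nu_{\underline M}(\xi) \lesssim \la\xi\ra$ from \eqref{-wt}, which gives $\|Q_-(g, \underline M)\|_{\HH^s} \lesssim \|\la\xi\ra g\|_{\HH^s}$ at once.

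The most delicate step is the use of Corollary~\ref{factionQ} for $Q_+(g, \underline M)$: because $Q_+$ is not symmetric, the naive estimate \eqref{nowt} applied with arguments $(g, \underline M)$ in that order would place the costly $\la\xi\ra$ weight on $\underline M$ and yield nothing useful, and it is only the asymmetric ``further estimates'' underpinning Corollary~\ref{factionQ} that move that weight onto the Maxwellian factor at the cost of a mild improvement $s \to s' < 1$ in its Maxwellian exponent. The hypothesis $s \in [\mez, 1[$ is sharp at this point: it is precisely what leaves a margin $s' \in (s, 1)$ for which $\underline M \in \HH^{s'}$, so that the operator norms appearing in the four bounds above are finite.
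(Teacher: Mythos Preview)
Your proof is correct and follows the same route the paper intends: the paper's own argument is just the one-line remark ``Corollary~\ref{factionQ} implies the following result,'' and you have unpacked precisely that, additionally invoking Corollary~\ref{Qbd} for the $\la\xi\ra^{-\mez}\HH^s\to\la\xi\ra^{\mez}\HH^s$ half and handling the missing loss term $Q_-(g,\underline M)=g\,\nu_{\underline M}$ (not listed in Corollary~\ref{factionQ}) directly via \eqref{-wt}. Your observation that the constraint $s<1$ is what makes $\underline M\in\HH^{s'}$ for some $s'\in(s,1)$, and hence activates Corollary~\ref{factionQ}, is exactly the point.
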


%=========================

 \subsection{Symmetry and coercivity on  $\HH^{\mez}$. }

Let $\VV = \ker R \cap \HH^\mez$ and let 
 $\UU$ denote the orthogonal complement of $\VV$ in $\HH$. It has dimension 
$5$. 
Noting that 
\begin{equation}
\big(  \chi, f \big)_{L^2}   = \big( \chi \underline M , f \big) _{\HH}
\end{equation}
we see that $\UU$ is spanned by the functions $\psi_j \underline M$. 
An orthogonal   basis  is   
\begin{equation}
\label{orthobasis} 
\phi_j (\xi)  = \chi_j (\xi) \underline M (\xi) , \qquad j= 0, \ldots, 4, 
\end{equation}
with 
\begin{equation}
\begin{aligned}
&\chi_0 (\xi  ) = 1 ,  \qquad \chi_j(\xi) = \frac{\xi_j - \underline v_j}{\sqrt \gamma \underline T} \quad \mathrm{for} \ j= 1,2,3, 
\\
& \chi_4 (\xi ) =  \frac{1}{\sqrt 6} \Big( \frac{|\xi - \underline  v|^2}{\gamma \underline T} - 3 \Big). 
\end{aligned}
\end{equation} 

We denote by $\PP_\UU$  and $\PP_\VV$ the orthogonal projection from 
$\HH^\mez$ to $\UU$ and $\VV$ respectively. In the language of \cite{LY}, $\UU$ is the macroscopic part of $f$
and $\VV$ is the microscopic part.

Note that  $\UU \subset \la \xi \ra^{- \mez} \HH^{\mez} $ 
and  $\UU \subset \HH^s$ for all $s < 1$. Therefore

\begin{lem}
\label{actionPP}
For $s \in [ \mez, 1 [$,    $\PP_V$ maps 
$\HH^s$  onto $\VV^s := \VV \cap \HH^s = \ker R \cap \HH^s$ and 
$\la \xi \ra^\mez \HH^s$ onto $\VV \cap (\la \xi \ra^\mez \HH^s)$. 
\end{lem}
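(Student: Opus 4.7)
The plan is to write $\PP_V = I - \PP_U$ and work directly with the finite-rank operator $\PP_U$. Since $\PP_U$ is the orthogonal projection in $\HH^{\mez}$ onto $\UU = \Span\{\phi_j\}_{j=0}^{4}$ and the basis is orthogonal with $\phi_j = \chi_j \underline M$, the explicit formula is
\[
\PP_U f \;=\; \sum_{j=0}^{4} \|\phi_j\|_{\HH^{\mez}}^{-2}\, \phi_j \int \chi_j f \, d\xi,
\]
where I have used the identity $(f, \phi_j)_{\HH^{\mez}} = \int \underline M^{-1} f\cdot \chi_j \underline M \, d\xi = \int \chi_j f \, d\xi$. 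This expression makes sense for any $f$ such that the five scalar moments $\int \chi_j f\,d\xi$ are finite, and will be the device that lets me extend $\PP_U$ (and hence $\PP_V$) consistently to both weighted spaces of interest.

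First I would check that these five moment functionals extend continuously to $\HH^s$ and to $\la\xi\ra^{\mez}\HH^s$ for every $s \in [\mez, 1[$. By Cauchy--Schwarz,
\[
\Big|\int \chi_j f \, d\xi\Big| \le \|\chi_j \la\xi\ra^{\mez} \underline M^s\|_{L^2}\; \|f\|_{\la\xi\ra^{\mez}\HH^s},
\]
and analogously, without the extra $\la\xi\ra^{\mez}$ factor, for the $\HH^s$ norm; both $L^2$ norms on the right are finite for $s > 0$ since $\chi_j$ grows at most quadratically while $\underline M^s$ has Gaussian decay. Hence $\PP_U$ extends as a bounded linear map from either space into the finite-dimensional space $\UU$. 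Since each $\phi_j = \chi_j \underline M$ is Schwartz, $\UU$ embeds into both $\HH^s$ and $\la\xi\ra^{\mez}\HH^s$ for $s < 1$, so the image of $\PP_U$ does too. Consequently $\PP_V f = f - \PP_U f$ lies in whichever of the two spaces $f$ started in.

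Next I would verify that the image actually lies in $\VV^s$, i.e.\ that $R(\PP_V f) = 0$. This is immediate from the defining property of the orthogonal projection: $\int \chi_j (f - \PP_U f)\, d\xi = (f - \PP_U f, \phi_j)_{\HH^{\mez}} = 0$ for every $j$, and since $\{\chi_j\}$ spans the same five-dimensional space of polynomials as the components of $\Psi$, vanishing of all $\chi_j$-moments is equivalent to $R(\PP_V f) = 0$. Surjectivity is then trivial: for $g \in \VV^s$ all five moments $\int \chi_j g \, d\xi$ already vanish, so $\PP_U g = 0$ and $\PP_V g = g$; the argument for $\VV \cap \la\xi\ra^{\mez}\HH^s$ is identical. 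I do not anticipate a serious obstacle here: the whole argument reduces to a finite-dimensional linear algebra computation combined with the elementary fact that polynomials times Gaussians are Schwartz. The only mild point of care is that for $s = \mez$ the space $\la\xi\ra^{\mez}\HH^{\mez}$ is not contained in $\HH^{\mez}$, so one must interpret $\PP_V$ on this space via the extended formula above rather than via the literal $\HH^{\mez}$-orthogonal projection.
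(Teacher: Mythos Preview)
Your argument is correct and follows the same idea as the paper: the paper simply notes just before the lemma that $\UU \subset \la \xi \ra^{-\mez}\HH^{\mez}$ and $\UU \subset \HH^s$ for all $s<1$, and then states the lemma as an immediate consequence (``Therefore''), leaving the details implicit. You have written out exactly those details --- the explicit finite-rank formula for $\PP_\UU$, continuity of the moment functionals, and the surjectivity check via $\PP_\VV g = g$ on $\ker R$ --- so your approach coincides with the paper's, only spelled out more fully.
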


\begin{rem}
\label{remcomm}
\textup{The projections $\PP_\UU$ and $\PP_\VV$ do not commute with 
the operator of multiplication by $\xi_1$.  They are not orthogonal in $\HH^s$ for $s > \mez$, but still produce
a continuous decomposition $\HH^s = \UU \oplus \VV^s$. }

\end{rem}

\begin{prop}
  $\underline L $ is (formally) self adjoint and nonpositive in $\HH^\mez$  
 and definite negative on 
 $\VV$. More precisely,
 
 i) for all $f$ and $g$ in $f \in \la \xi \ra^{-1} \HH^\mez$, 
 \be
 \big( \underline L f, g \big)_{\HH^\mez} =   \big( f,  \underline L g \big)_{\HH^\mez} .
 \ee
 
 ii)   there is $\delta > 0$ such that for all $f \in \la \xi \ra^{-1} \HH$: 
 \begin{equation}
 \label{coercivmez}
 \delta \big\| \la \xi \ra^\mez \PP_{\VV}  f \big\|^2_{\HH^\mez} \le  - \re  \big(  \underline L f, f \big)_{\HH^\mez}.  
 \end{equation}
 \end{prop}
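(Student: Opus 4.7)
The plan is to reduce to the classical symmetric Dirichlet form of Boltzmann via the substitution $\tilde f := \underline M^{-1/2} f$, which identifies the weighted inner product $(f,g)_{\HH^\mez}$ with the standard $L^2$ pairing of $\tilde f$ and $\tilde g$, and then to combine this with Grad's splitting $\underline L = -\nu_{\underline M}\,\Id + K$, where $K f := Q_+(\underline M, f) + Q_+(f, \underline M)$.

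For (i), I expand $\underline L f = Q(\underline M, f) + Q(f, \underline M)$ via the splitting \eqref{splitQ} and exploit the three standard symmetries of binary elastic collisions in the $8$-fold integral defining $(\underline L f, g)_{\HH^\mez}$: the measure-preserving involution $(\xi,\xi_*)\leftrightarrow (\xi',\xi'_*)$ (which by \eqref{Omega} also leaves $C$ invariant), the swap $(\xi,\xi_*)\leftrightarrow (\xi_*,\xi)$, and microreversibility $\underline M(\xi)\underline M(\xi_*)=\underline M(\xi')\underline M(\xi'_*)$. Averaging over the resulting four configurations yields the symmetrized identity
\[
(\underline L f,g)_{\HH^\mez} = -\tfrac14 \int C(\Omega,\xi-\xi_*)\,\underline M(\xi)\underline M(\xi_*)\,\Delta\tilde f\cdot\overline{\Delta\tilde g}\, d\xi\, d\xi_*\, d\Omega,
\]
where $\Delta\tilde f := \tilde f' + \tilde f'_* - \tilde f - \tilde f_*$. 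This is manifestly symmetric in $(f,g)$, giving (i); the underlying change-of-variables manipulations are justified on $f,g\in\la\xi\ra^{-1}\HH^\mez$ by Lemma~\ref{lemaction uL} together with density of Schwartz functions.

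Specializing to $f=g$ gives $-\re(\underline L f,f)_{\HH^\mez}\ge 0$, with equality iff $\Delta\tilde f\equiv 0$. Boltzmann's classical characterization of collision invariants then forces $\tilde f\in\Span\{1,\xi_1,\xi_2,\xi_3,|\xi|^2\}$, hence $f\in\UU$. Thus $\underline L|_{\HH^\mez}$ is a nonpositive self-adjoint operator with $\ker=\UU$, strictly negative definite on $\VV$.

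To upgrade this qualitative strict negativity to the weighted bound \eqref{coercivmez}, I use Grad's decomposition. By the hard-sphere lower bound \eqref{-wt}, $\nu_{\underline M}(\xi)\gtrsim\la\xi\ra$, and by Corollary~\ref{factionQ} (applied with some $s'\in(\mez,1)$) the conjugated operator $\nu_{\underline M}^{-1/2}K\nu_{\underline M}^{-1/2}$ is compact and self-adjoint on $\HH^\mez$. Hence $S := -\nu_{\underline M}^{-1/2}\underline L\,\nu_{\underline M}^{-1/2} = \Id - \nu_{\underline M}^{-1/2}K\nu_{\underline M}^{-1/2}$ is a nonnegative self-adjoint compact perturbation of $\Id$, with spectrum accumulating only at $1$ and kernel $\nu_{\underline M}^{1/2}\UU$. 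A standard compactness-contradiction argument---exploiting the weak closure of $\VV$ in $\HH^\mez$ and the trivial intersection $\nu_{\underline M}^{1/2}\VV\cap\nu_{\underline M}^{1/2}\UU=\{0\}$---converts this spectral gap away from the kernel into the quantitative bound $-\re(\underline L f,f)_{\HH^\mez}\ge\delta\|\nu_{\underline M}^{1/2}\PP_{\VV} f\|^2_{\HH^\mez}$, which combined with \eqref{-wt} gives \eqref{coercivmez}. The main obstacle is this last step: producing the sharp weight $\la\xi\ra^\mez$ rather than merely unweighted $\HH^\mez$-coercivity on $\VV$. That the weight comes out correctly hinges on the hard-sphere lower bound $\nu_{\underline M}\gtrsim\la\xi\ra$ of \eqref{-wt}, a feature that fails for softer potentials and is the structural reason the ``hard'' case admits this sharp gap.
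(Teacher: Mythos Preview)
Your proof is correct and follows essentially the same classical route as the paper's ``Notes on the proof'': symmetry via the collision symmetries and microreversibility, kernel identification via collision invariants, and coercivity via Grad's splitting $\underline L=-\nu+K$ with $K$ compact. The one mild difference is in extracting the $\la\xi\ra^{\mez}$ weight in \eqref{coercivmez}: the paper appeals to Weyl's Lemma to get a spectral gap for $\underline L$ (which as stated yields only the unweighted bound $-(\underline Lv,v)\ge\delta\|v\|^2$ on $\VV$, the weight then recovered by a standard bootstrap using boundedness of $K$ and $\nu\gtrsim\la\xi\ra$), whereas your conjugation $S=\Id-\nu^{-1/2}K\nu^{-1/2}$ builds the weight in from the start and reads it off directly from the spectral gap of $S$; both are standard and equivalent here.
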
 
 
 \begin{proof}[Notes on the proof] This is a classical result in the theory of 
 Boltzmann equation in the  hard sphere case and more generally in the case 
 of hard cut off potentials (see e.g. \cite{C,G, Gl, CN}) 
 %%% TODO  CHECK THE REFS
 
1.  The analysis of section~\ref{collop} splits $\underline L$ into 
 $\underline L= -\nu(\xi)+K$,  
 with 
 $$
K g (\xi)  = -     \int k_1(\xi, \xi_*  ) g(\xi_*) d \xi_*   
 + \int k _2 (\xi , \Omega, \xi_*)  g(\xi') d \xi_* d \Omega 
  + \int k _3 (\xi , \Omega, \xi_*)  g(\xi'_*) d \xi_* d \Omega 
$$  
  with 
\begin{eqnarray*}
&&k_1 (\xi, \xi_*) =  \underline M (\xi)  \int C (\Omega, \xi_* - \xi) d \Omega   =    \underline M  c_0( \xi - \xi_*)
\\
&&k_2 (\xi, \Omega, \xi_*)    =  \underline M (\xi'_*) C (\Omega, \xi_* - \xi) 
\\
&& k_3 (\xi, \Omega, \xi_*) =  \underline M (\xi') C (\Omega, \xi_* - \xi ) 
\end{eqnarray*}
Using 
the conservations 
  \begin{equation}
  \xi + \xi_* = \xi' + \xi'_* , \qquad | \xi |^2 + | \xi_*|^2 = | \xi' |^2 +  | \xi'_*|^2, 
  \qquad d \xi d \xi_* = d \xi' d \xi'_*, 
  \end{equation}
which imply  that  
  \begin{equation}
  \label{relwei}
  \underline M (\xi) \underline M (\xi_*) =  \underline M (\xi') \underline M (\xi'_*), 
  \end{equation} 
 one shows that 
$$
\big( \underline M^{-1} K_j  g, h \big)_{L^2} 
  \big(g, \underline M^{-1} K_j h   \big)_{L^2}
$$
for $j = 1, 2, 3$, implying the symmetry of $\underline L$ in $\HH^\mez$. 

2. One can also argue as follows. By Boltzmann's $H$-theorem,
$$
\int Q(f,f)\log f d\xi \le 0 
$$
for all $f $ withe enough decay at infinity. 
Hence, Taylor expanding about the Maxwellian $\underline M$,
a minimizer of $\int Q(f,f)\log f d\xi $, we obtain symmetry and nonnegativity
of the Hessian,
$$
\int \frac{(\underline L h) h}{\underline M}d\xi\le 0,
$$
giving nonnegativity of $\underline L$ on $\HH^\mez$ and also 
formal self-adjointness. 

3.  It is known that is  $K$ compact in  $\HH^\mez$ and that 
$\ker \underline L = \UU$ (this can be proved   using the formulas above). 
%%%% TODO ADD REF'S
By self-adjointness of $\underline L$ on $\HH^\mez $, to establish
strict negativity on $\VV$, it is sufficient to establish a spectral
gap between the eigenvalue zero and the essential spectrum of 
$\underline L$.   
But, this follows from Weyl's Lemma by comparison of
$\underline L=-\nu+K$ with the multiplication operator by 
$-\nu(\xi) \le  - c_0 < 0$.
\end{proof}

In a more explicit form, the inequality \eqref{coercivmez}  reads
\begin{equation}
\label{coerc16} 
\delta \int  \la \xi \ra  \underline M(\xi) ^{-1} |  \PP_{\VV}  f (\xi )| ^2  d \xi 
\le  - \re \int \underline M(\xi)^{-1}  \underline L f (\xi)   f (\xi)  d\xi . 
\end{equation}

We also point out the following properties which are freely used below and 
which follow from the symmetry of $\underline L$ in $\HH^\mez$ : 
\begin{equation}
\underline L = \PP_\VV \underline L = \underline L \PP_\VV, 
\qquad 
 \PP_\UU \underline L = \underline L \PP_\UU = 0. 
\end{equation}

% =================================
 
\subsection{Coercivity on $\HH^s$}
 \label{genco}

%TODO: emphasize that NOT orthogonal with this weight...
% The splitting $\HH^s = \UU \oplus \VV^s$ using projectors 
 %$\PP_U$ and $\PP_V$ is  orthogonal  when $s > \mez$ for the scalar product defined by \eqref{norms}.
%NO, later in appropriate section... DONE.

With $\lambda \ge 0$ to be determined later on, introduce the equivalent norms: 
\begin{equation}
\label{tildenorms}
\big\| f \big\|_{\widetilde \HH^s}^2 :=   
 \big\| f \big\|_{\HH^s}  +   \lambda \big\|   f \big\|^2_{\HH^\mez} . 
\end{equation}

 \begin{prop}
 \label{propcoercs}  For $\mez \le s < 1$, 
 the operator $\underline L$ is continuous from 
 $\la \xi \ra^{-\mez } \HH^s$ to $(\la \xi \ra^{\mez} \HH^s )\cap \VV$
 and  from $\la \xi \ra^{-1 } \HH^s$ to $ \VV^s$, and formally coercive on 
 $\VV^s$ for the norm \eqref{tildenorms}.  
More precisely,  there are  $\lambda \ge 0 $   and  $\delta > 0$ such that 
for all $f \in \la \xi \ra^{-1 } \VV^s$: 
 \begin{equation}
 \label{coercivs}
\delta \big\| \la \xi \ra^\mez\PP_\VV f \big\|_{\widetilde \HH^s}^2  \le  - \re  \big(  \underline L f, f \big)_{\widetilde \HH^s}  \end{equation}
 \end{prop}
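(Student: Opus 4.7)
The plan is to leverage the $\HH^\mez$-coercivity \eqref{coercivmez}, which is the only place the symmetry of $\underline L$ is really available, and to combine it with a direct but non-coercive estimate in $\HH^s$ via the parameter $\lambda$. As in the $\HH^\mez$ argument, the starting point is the splitting $\underline L f = K f - \nu_{\underline M}(\xi)\,f$, where multiplication by $-\nu_{\underline M}$ is dissipative in every weighted $L^2$-norm because $\nu_{\underline M}(\xi) \gtrsim \la \xi \ra$ by \eqref{-wt}, while the gain part $K f = Q_+(\underline M, f) + Q_+(f, \underline M) - \underline M\,\nu_f$ is only perturbatively bounded, not spectrally small, in $\HH^s$ for $s > \mez$.

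For the continuity statements, pick any $s' \in (s,1)$, so that $\underline M \in \HH^{s'}$. Then Corollary \ref{factionQ} together with Remark \ref{remgenmaxw}, applied to each of the four pieces of $\underline L f$, yields $\underline L\colon \la\xi\ra^{-\mez}\HH^s \to (\la\xi\ra^{\mez}\HH^s)\cap\VV$ and $\underline L\colon \la\xi\ra^{-1}\HH^s \to \VV^s$, the image lying in $\ker R = \VV$ as always.

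For the coercivity \eqref{coercivs}, fix $f \in \la\xi\ra^{-1}\VV^s$ and write
\begin{equation*}
-\re\bigl(\underline L f, f\bigr)_{\HH^s} = \int \nu_{\underline M}(\xi)\,|f(\xi)|^2\, \underline M(\xi)^{-2s}\,d\xi \;-\; \re\bigl(K f, f\bigr)_{\HH^s}.
\end{equation*}
The first term is bounded below by $c\,\|\la\xi\ra^\mez f\|^2_{\HH^s}$ via the lower estimate in \eqref{-wt}, while the bilinear estimates of Section \ref{collop} (with weight pair $s < s' < 1$) give $\|K f\|_{\HH^s} \le C\|f\|_{\HH^s}$, hence
$-\re(\underline L f, f)_{\HH^s} \ge c\,\|\la\xi\ra^\mez f\|^2_{\HH^s} - C\|f\|^2_{\HH^s}$. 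To absorb the $L^2$-remainder, split $f$ according to $|\xi| \le R$ or $|\xi| > R$: on the large-velocity set one has $1 \le \la\xi\ra/\la R\ra$, absorbing that portion into a small multiple of $\|\la\xi\ra^\mez f\|^2_{\HH^s}$ for $R$ large; on the bounded set the weights $\underline M^{-2s}$ and $\underline M^{-1}$ are comparable with constant $C(R)$, so that portion is $\le C(R)\|f\|^2_{\HH^\mez}$. This yields
\begin{equation*}
-\re\bigl(\underline L f, f\bigr)_{\HH^s} \;\ge\; \tfrac{c}{2}\,\|\la\xi\ra^\mez f\|^2_{\HH^s} \;-\; C_R\,\|f\|^2_{\HH^\mez}.
\end{equation*}
Adding $\lambda$ times \eqref{coercivmez}, and using $\PP_\VV f = f$ for $f \in \VV^s$ so that $\|f\|_{\HH^\mez} \le \|\la\xi\ra^\mez f\|_{\HH^\mez}$, the remainder is swallowed once $\lambda \ge 2 C_R/\delta$, producing the required lower bound by a positive multiple of $\|\la\xi\ra^\mez f\|^2_{\widetilde \HH^s}$.

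The main obstacle is the estimate on the cross term $(Kf, f)_{\HH^s}$: for $s > \mez$ one loses the self-adjointness underlying the clean Weyl-type spectral-gap argument in $\HH^\mez$, so one can only extract an ``elliptic term plus $L^2$-remainder'' rather than genuine coercivity. The $\lambda$-trick succeeds precisely because the $\HH^\mez$ weight $\underline M^{-1}$ dominates the $\HH^s$ weight $\underline M^{-2s}$ on bounded velocity sets, so $\|\cdot\|_{\HH^\mez}$ absorbs the bounded-velocity remainder that $\|\cdot\|_{\HH^s}$ cannot. The hypothesis $s < 1$ enters twice: in choosing the auxiliary exponent $s' \in (s,1)$ so that $\underline M \in \HH^{s'}$ in order to invoke the bilinear estimates, and in ensuring that the two weights are genuinely comparable (not both singular) on compact velocity sets.
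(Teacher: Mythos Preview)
Your argument is correct and follows essentially the same route as the paper: split $\underline L = -\nu + K$, use the lower bound $\nu \gtrsim \la\xi\ra$ to get a coercive term in $\HH^s$ minus an $\HH^s$-remainder $C\|f\|^2_{\HH^s}$, then absorb the remainder by adding $\lambda$ times the $\HH^\mez$-coercivity \eqref{coercivmez}. The only cosmetic difference is in the absorption step: where you split the remainder via a velocity cutoff $|\xi|\lessgtr R$, the paper does it by choosing $\lambda$ so that the pointwise inequality $C\,\underline M^{-2s} \le \tfrac12\la\xi\ra(\delta_1 \underline M^{-2s} + \lambda\delta_0 \underline M^{-1})$ holds for all $\xi$---which is of course the same idea, since the first term handles large $\xi$ and the second handles bounded $\xi$.
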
 
%CHANGED: statement of proposition and def. of norm.  Now all ok...
 
 \begin{proof} We want to prove that
 \begin{equation}
\label{coerc18} 
\begin{aligned}
\delta& \int  \la \xi \ra  \big( \underline  M(\xi) ^{-2s} + 
\lambda \underline M(\xi)^{-1} \big) |     f (\xi )| ^2  d \xi 
\\
& \le  - \re \int \big( \underline  M(\xi) ^{-2s} + 
\lambda \underline M(\xi)^{-1} \big) \underline L f (\xi)   f (\xi)  d\xi ,  
\end{aligned}
 \end{equation}
 Following the analysis of Section~\ref{collop}, 
 $$
 \underline L = - \nu_0 \Id + K 
 $$
 where $\nu_0(\xi) \approx \la \xi \ra$ and 
 $K $ is bounded from $\HH^s$ to $\HH^s$, since the Maxwellian $\underline M \in \HH^{s'}$
 for $s < s' < 1$. 
 Hence there is $\delta_1 > 0$ such that 
 $$
 \delta_1 \int  \la \xi \ra  \underline M(\xi) ^{-2s} |     f (\xi )| ^2  d \xi 
\le  - \re \int \underline M(\xi)^{-2s}  \underline L f (\xi)   f (\xi)  d\xi + C \int 
  \underline M(\xi) ^{-2s} |     f (\xi )| ^2  d \xi . 
$$
Moreover, by \eqref{coerc16}, there is $\delta_0$ such that 
$$ 
\delta_0 \int  \la \xi \ra  \underline M(\xi) ^{-1} |     f (\xi )| ^2  d \xi 
\le  - \re \int \underline M(\xi)^{-1}  \underline L f (\xi)   f (\xi)  d\xi . 
$$
Hence: 
$$ 
\begin{aligned}
  \int  \la \xi \ra & \big( \delta_1 \underline  M(\xi) ^{-2s} + 
\lambda \delta_0 \underline M(\xi)^{-1} \big) |     f (\xi )| ^2  d \xi 
\\
& \le  - \re     \big(  \underline L f, f \big)_{\widetilde \HH^s}  + 
C \int 
  \underline M(\xi) ^{-2s} |     f (\xi )| ^2  d \xi . 
\end{aligned}
$$ 
We  choose $\lambda $ such that for all $\xi$
$$
C M(\xi) ^{-2s} \le \mez  \la \xi \ra   \big( \delta_1 \underline  M(\xi) ^{-2s} + 
\lambda \delta_0 \underline M(\xi)^{-1} \big)
$$
implying the inequality \eqref{coerc18}. 
 \end{proof}

\begin{rem}\label{fastdecay}
\textup{
Included in the bound \eqref{coercivs} is the observation
that both the first-order Chapman--Enskog approximation $\bar U_{NS}$
and the entire hierarchy of higher-order Chapman--Enskog correctors
lie in $\HH^s$, any $0<s<1$, something that is not immediately obvious. 
Indeed, looking closely at the inversion of $L_a$, we see that
they in fact decay at successively higher polynomial multipliers of the 
full Maxwellian rate.
}
\end{rem}
 
 %=========================================

  \subsection{Comparison  }

    We consider the linearized operator $ L_a  g  = Q (a , g) + Q (g, a)$   at   
    $a  $, not necessarily nonnegative, 
 close to $\underline  M$. 
  
  \begin{prop}
  \label{perturblem}  
  For $s \in [\mez, 1[$ and $a \in \la \xi \ra^{- \mez} \HH^s$, 
  $L_a$ is bounded from $\la \xi \ra^{- \mez} \HH^s$
  to $\la \xi \ra^{\mez} \HH^s$. Moreover, there are constants $\delta > 0$,  $C > 0 $, $\lambda\ge 0$ 
  and $\eps_0 > 0$ 
  such that  for all 
  $f \in \la \xi \ra^{- \mez} \HH^s$
      \begin{equation}
      \label{actionLa}
  \big\|\la \xi \ra^{- \mez}  L_a f  \big\|_{\widetilde \HH^s}   
  \le C   \big\| \la \xi  \ra^\mez  \PP_\VV f  \big\|_{\widetilde \HH^s}   
+ C   \eps \big\| \la \xi  \ra^\mez  \PP_\UU f  \big\|_{\widetilde \HH^s},  
  \end{equation} 
 \begin{equation}
      \label{action2La}
\big|\,   \big( L_a  f    , f  \big)_{\widetilde \HH^s }  \big|    
  \le C   \big\| \la \xi  \ra^\mez  \PP_\VV f  \big\|^2_{\widetilde \HH^s}   
+ C   \eps^2  \big\| \la \xi  \ra^\mez  \PP_\UU f  \big\|^2_{\widetilde \HH^s},  
  \end{equation} 
  and 
  \begin{equation} 
    \label{coerccomp}
   \delta  \big\| \la \xi \ra^{\mez} \PP_\VV f  \big\|_{\widetilde \HH^s}^2  \le 
   -  \re \big( L_a  f    , f  \big)_{\widetilde \HH^s }   +      C  \eps^2  \big\| \la \xi \ra ^\mez \PP_\UU f  \big\|_{\widetilde \HH^s}^2 , 
  \end{equation} 
        with 
  \begin{equation} 
  \label{small1}
  \eps = \eps(a) :=   \big\| \la \xi \ra^\mez   (a  - \underline M) \big\|_{\widetilde \HH^s} . 
    \end{equation}

 \end{prop}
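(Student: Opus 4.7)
The plan is to write $L_a = \underline L + \widetilde L$ with $\widetilde L g := Q(a-\underline M, g) + Q(g, a-\underline M)$, and to combine the coercivity of $\underline L$ from Proposition \ref{propcoercs} with the bilinear bounds of Section \ref{collop} applied to $\widetilde L$. The structural facts that drive the whole argument are: (i) the annihilation $\underline L \PP_\UU = 0$; (ii) the range property $L_a f \in \VV$, which follows from the collision invariant identity $R Q(f,g) \equiv 0$; and (iii) the $\HH^\mez$-orthogonality $\UU \perp \VV$.

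Continuity is immediate: Lemma \ref{lemaction uL} controls $\underline L$, and Corollary \ref{Qbd} applied with one argument $= a - \underline M$ yields
$$\|\la\xi\ra^{-\mez}\widetilde L f\|_{\widetilde \HH^s} \le C \|\la\xi\ra^\mez (a-\underline M)\|_{\widetilde\HH^s}\|\la\xi\ra^\mez f\|_{\widetilde\HH^s} = C\eps\|\la\xi\ra^\mez f\|_{\widetilde \HH^s}.$$
For \eqref{actionLa}, decompose $f = v + u$ with $v := \PP_\VV f$, $u := \PP_\UU f$. Since $\underline L u = 0$, one has $L_a f = \underline L v + \widetilde L v + \widetilde L u$; Lemma \ref{lemaction uL} bounds the first summand by $C\|\la\xi\ra^\mez v\|$, while the bilinear estimate above bounds the other two by $C\eps\|\la\xi\ra^\mez v\|$ and $C\eps\|\la\xi\ra^\mez u\|$, respectively. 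The triangle inequality then gives \eqref{actionLa}.

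For the coercivity \eqref{coerccomp}, I would split $(L_a f, f)_{\widetilde \HH^s} = (\underline L f, f)_{\widetilde\HH^s} + (\widetilde L f, f)_{\widetilde \HH^s}$ and bound the first term by Proposition \ref{propcoercs}, namely $-\re(\underline L f, f)_{\widetilde\HH^s} \ge \delta\|\la\xi\ra^\mez v\|^2_{\widetilde\HH^s}$. The remainder is expanded as $(\widetilde L v, v) + (\widetilde L v, u) + (\widetilde L u, v) + (\widetilde L u, u)$. Cauchy--Schwarz together with the $O(\eps)$ bound on $\widetilde L$ makes $(\widetilde L v, v)$ absorbable into the main term for $\eps$ small. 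The mixed pieces are bounded by $C\eps\|\la\xi\ra^\mez v\|\,\|\la\xi\ra^\mez u\|$, and Young's inequality splits each as $\eta\|\la\xi\ra^\mez v\|^2 + (C\eps^2/\eta)\|\la\xi\ra^\mez u\|^2$, supplying the desired $\eps^2$-factor. The diagonal term $(\widetilde L u, u)_{\widetilde\HH^s}$ is the delicate one: since $\widetilde L u \in \VV$ and $u \in \UU$ are $\HH^\mez$-orthogonal, the $\lambda\HH^\mez$-contribution to the inner product vanishes identically, reducing the estimate to the $\HH^s$-part, which is absorbed by choosing $\lambda$ large enough. Inequality \eqref{action2La} follows similarly from \eqref{actionLa} by Cauchy--Schwarz and Young's inequality.

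The main obstacle is isolating the $\eps^2$ rather than $\eps$ coefficient in front of $\|\PP_\UU f\|^2$; this refinement is crucial for the downstream profile argument and cannot be obtained by the crude bound $\|\widetilde L\|\le C\eps$ alone. It depends on the simultaneous use of the $\HH^\mez$-orthogonality of the macro-micro decomposition and the range property $L_a f \in \VV$, which together annihilate the potentially dangerous $\HH^\mez$-pairing $(L_a f, u)_{\HH^\mez}$ and allow the leftover $\HH^s$-contributions to be absorbed via Young's inequality with $\eps$-weights.
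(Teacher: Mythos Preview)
Your approach is the same as the paper's: decompose $L_a = \underline L + \widetilde L$ with $\widetilde L g = Q(a-\underline M,g)+Q(g,a-\underline M)$, control $\widetilde L$ via Corollary~\ref{Qbd} with an $\eps$-factor, use $\underline L\,\PP_\UU = 0$ for \eqref{actionLa}, and for the inner-product bounds combine Proposition~\ref{propcoercs} with the range property $L_af\in\VV$ and the $\HH^\mez$-orthogonality of $\UU$ and $\VV$. For $s=\mez$ your argument closes exactly as written.

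For $s>\mez$ your handling of the diagonal term $(\widetilde L u,u)_{\widetilde\HH^s}$ has a gap. You correctly observe that the $\lambda\HH^\mez$-component vanishes, leaving only $(\widetilde L u,u)_{\HH^s}$; but the claim that this residual ``is absorbed by choosing $\lambda$ large enough'' does not work. The bilinear bound gives only $|(\widetilde L u,u)_{\HH^s}|\le C\eps\,\|\la\xi\ra^\mez u\|_{\HH^s}^2$, whereas the admissible error in \eqref{coerccomp} is $C\eps^2\|\la\xi\ra^\mez u\|_{\widetilde\HH^s}^2$. Since $\UU$ is finite-dimensional, all norms on it are equivalent with constants independent of $\lambda$, so $\|\la\xi\ra^\mez u\|_{\widetilde\HH^s}^2\sim(1+\lambda)\|\la\xi\ra^\mez u\|_{\HH^s}^2$; absorbing $C\eps\|u\|^2$ into $C\eps^2(1+\lambda)\|u\|^2$ would force $\lambda\gtrsim\eps^{-1}$, contradicting the requirement that $\lambda$ be fixed uniformly in $\eps$. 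The same obstruction appears in your derivation of \eqref{action2La} ``from \eqref{actionLa} by Cauchy--Schwarz'': pairing $L_af$ against the full $f$ generates an $O(1)$ cross term $\|\la\xi\ra^\mez v\|\,\|\la\xi\ra^\mez u\|$ that Young's inequality cannot convert into $C\eps^2\|u\|^2$. The paper's proof instead pairs $\widetilde Lf$ directly against $\PP_\VV f$, asserting $(\widetilde Lf,f)_{\widetilde\HH^s}=(\widetilde Lf,\PP_\VV f)_{\widetilde\HH^s}$ and thereby placing one factor of $\|\la\xi\ra^\mez\PP_\VV f\|$ in the estimate before Young is applied; that identity in the full $\widetilde\HH^s$ pairing (not merely in its $\HH^\mez$-component) is what you need to justify.
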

  
  In \eqref{coerccomp} the $\widetilde \HH^s$ scalar product has to be understood as the 
  integral 
  \begin{equation}
  \label{extscpr}
  \big( L_a  f    , f  \big)_{\widetilde \HH^s }  = 
   \int \big( \underline  M(\xi) ^{-2s} + 
\lambda \underline M(\xi)^{-1} \big)   L_a  f (\xi)   f (\xi)  d\xi ,  
  \end{equation}
  which is well defined since   $f \in \la \xi \ra^{- \mez} \HH^s$ and
   $L_a f \in \la \xi \ra^{\mez} \HH^s$.

  \begin{proof} That $L_a$ is bounded from  $ \la \xi  \ra^{- \mez} \HH^s$ to  $  \la \xi \ra^{ \mez} \HH^s$ for 
  all $s \in [\mez, 1[$,  follows 
 directly  from Section~\ref{collop}. 
  Moreover, 
  $$
   L_a f - \underline L f  = 
  Q( a - \underline M,  f )   
    +   Q(f,  a - \underline M )    
   $$
   and 
   $$
  \big\|\la \xi \ra^{- \mez} ( L_a f - \underline L f)  \big\|_{\widetilde \HH^s}   
  \le C \eps  \big\| \la \xi  \ra^\mez f \big\|_{\widetilde \HH^s}  . 
$$
Since $\underline L f = \underline L \PP_\VV f$, this implies \eqref{actionLa}.  
Since $L_a f $ and $\underline L f$ belong to $\VV$ and thanks to the definition 
of the modified scalar product  $\widetilde \HH^s$, 
$$
 \big( L_a  f - \underline L f    , f  \big)_{\widetilde \HH^s }= 
  \big( L_a  f  - \underline L f   , \PP_\VV f  \big)_{\widetilde \HH^s } = 
  O \Big( \eps   \big\| \la \xi  \ra^\mez f  \big\|_{\HH^s}   \big\| \la \xi  \ra^\mez \PP_\VV f \big\|_{\HH^s}\Big) .  
$$
 With \eqref{actionLa}, this implies \eqref{action2La} with a new constant $C$. 
With  Proposition~\ref{propcoercs}, this implies \eqref{coerccomp}. 
         \end{proof}

   \begin{rem}
   \textup{Since $\UU$ is finite dimensional, one can use any norm for 
   $\PP_\UU f$ in the estimates \eqref{actionLa} \eqref{action2La} and \eqref{coerccomp} above.  }
   \end{rem}

   \begin{rem} 
   \textup{We have in mind that $\eps (a)$ can be taken arbitrarily small.  
   This holds if $a = M_{\rmu}$ and 
   $\rmu$ is close to $\underline \rmu$ since , when $s < 1$,  } 
   $$
   \big\| \la \xi \ra^\mez   (M_\rmu - \underline M) \big\|_{\HH^s}^2 = 
   \int \la \xi \ra \Big|  M_\rmu -   \underline M \Big|^2 \underline M^{-2s} d \xi  \ \to \ 0 
   $$
  \textup{as $| \rmu - \underline \rmu  | \to 0$  by Lebesgue's dominated convergence theorem. } 
  \end{rem}

 %=====================================
%%%%%%%%%%%%%%%%%
%%%%%%%%%%%%
%%%%    SECTION 4 
%%%%

\section{Abstract formulation}\label{abframe}

We now rephrase the problem in a general framework,
%restricting to the ``natural'' norm $\HH=\HH^{\frac{1}{2}}$
%for which the linearized collision operator is formally self-adjoint.
for the square-root Maxwellian norm $\HH=\HH^{\frac{1}{2}}$ 
in which we carry out the main analysis.
We treat general weights in Section \ref{generalwt}, by
a bootstrap argument. 
Taking the shock speed equal to $0$  by frame-indifference,
we consider \eqref{Bode} as the abstract
standing-wave ODE  
\begin{equation}\label{relax1}
AU'= Q(U, U). 
\end{equation}
with 
\begin{equation}
A f(\xi)   = \xi_1 f(\xi)
\end{equation}
independent of $U$ (semilinearity of the Boltzmann equation), 
and $Q$ as in \eqref{colop}, \eqref{hs}.
%with $U$ taking its values in  $\HH  \subset L^2 (\RR^3) $,

\subsection{Bounds on the transport operator}\label{Abound}

The collision operator has been studied acting in spaces 
$\HH^s$ associated to our reference Maxwellian 
$\underline M$. We have the following evident facts regarding the transport operator $A$.

\begin{prop}
\label{symmA}
For $s \in [\mez, 1[$,  the operator $A$ is bounded from 
$\la \xi \ra^{- \mez} \HH^s$ to $\la \xi \ra^{- \mez}  \HH^{s}$  and (formally) self adjoint in $\HH^s$
as well as in $\widetilde \HH^s$. 
\end{prop}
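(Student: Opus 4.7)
The proposition collects three properties of the pointwise multiplication operator $Af(\xi)=\xi_1 f(\xi)$: the weighted mapping bound, and formal self-adjointness in $\HH^s$ and in $\widetilde\HH^s$. My plan is to dispatch each by a single integral computation, using the real positivity of the weights and the fact that the Maxwellian $\underline M^{-2s}$ grows exponentially in $\xi$ and therefore absorbs polynomial factors.

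For the mapping bound, I would combine $|\xi_1|\le\la\xi\ra$ with the absorption inequality $\la\xi\ra^{2}\le C_\eta\,\underline M^{-2\eta}$, valid for any $\eta>0$ by the exponential growth of $\underline M^{-2\eta}$. This gives
\begin{equation*}
\|Af\|^2_{\la\xi\ra^{-\mez}\HH^s}
=\int \underline M^{-2s}\la\xi\ra\,|\xi_1 f|^2\,d\xi
\le \int \underline M^{-2s}\la\xi\ra^3|f|^2\,d\xi
\le C_\eta\int \underline M^{-2(s+\eta)}\la\xi\ra\,|f|^2\,d\xi,
\end{equation*}
so $\|Af\|_{\la\xi\ra^{-\mez}\HH^s}\le C_\eta\|f\|_{\la\xi\ra^{-\mez}\HH^{s+\eta}}$ for any $\eta>0$ with $s+\eta<1$. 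Since $s\in[\mez,1[$ leaves room for such an $\eta$, this establishes the stated continuity $A:\la\xi\ra^{-\mez}\HH^s\to \la\xi\ra^{-\mez}\HH^s$: the unbounded pointwise factor $\xi_1$ is absorbed by an arbitrarily small, harmless loss in the Gaussian exponent, and no shift in polynomial weight occurs in the target space.

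Formal self-adjointness in $\HH^s$ is immediate from the real positive structure: for $f,g$ making the pairing converge,
\begin{equation*}
(Af,g)_{\HH^s}=\int \underline M^{-2s}\,\overline{\xi_1 f}\,g\,d\xi=\int \underline M^{-2s}\,\bar f\,\xi_1 g\,d\xi=(f,Ag)_{\HH^s},
\end{equation*}
using only that $\underline M^{-2s}$ is real positive and $\xi_1$ is real. Since $(\cdot,\cdot)_{\widetilde\HH^s}=(\cdot,\cdot)_{\HH^s}+\lambda(\cdot,\cdot)_{\HH^{\mez}}$ is a positive linear combination of two inner products of exactly the same real-weight form, self-adjointness in $\widetilde\HH^s$ follows termwise. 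The only technical point is absolute integrability of the integrands, which I would handle by first restricting to a dense subspace (Schwartz functions, or $\la\xi\ra^{-\mez}\HH^{s+\eta}$, on which the mapping bound of step two already guarantees convergence of both pairings) and then extending the identity by continuity. I do not expect any substantive obstacle; the proposition is essentially a bookkeeping statement placing $A$ within the same functional framework used for $\underline L$ in Section \ref{lincollop}.
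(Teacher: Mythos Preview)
Your argument for the mapping bound contains a genuine error, and in fact the statement as literally printed contains a typographical slip that may have misled you.

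What you actually prove is the inequality
\[
\|Af\|_{\la\xi\ra^{-\mez}\HH^s}\le C_\eta\|f\|_{\la\xi\ra^{-\mez}\HH^{s+\eta}},
\]
i.e.\ boundedness of $A$ from $\la\xi\ra^{-\mez}\HH^{s+\eta}$ into $\la\xi\ra^{-\mez}\HH^s$. Since $\HH^{s+\eta}\subsetneq\HH^s$ (a larger Gaussian exponent is a \emph{stronger} decay requirement, hence a smaller space), this does not yield boundedness of $A$ on $\la\xi\ra^{-\mez}\HH^s$. That stronger claim is in fact false: writing $f=\underline M^{\,s}\la\xi\ra^{-\mez}g$ with $g\in L^2$ identifies $\la\xi\ra^{-\mez}\HH^s$ isometrically with $L^2$, and under this identification $A$ becomes multiplication by $\xi_1$ on $L^2$, which is unbounded. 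Your sentence ``since $s\in[\mez,1[$ leaves room for such an $\eta$, this establishes the stated continuity'' is therefore a non sequitur: the $\eta$ sits on the wrong side of the inequality to close the loop.

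The intended target space is $\la\xi\ra^{\mez}\HH^s$ (the sign on the exponent in the printed proposition is a typo), consistent with how $A$ is used throughout the paper and with the parallel statements for $\underline L$ in Lemma~\ref{lemaction uL} and Proposition~\ref{propcoercs}. With the corrected target the bound is the one-line estimate the paper calls ``evident'': since $|\xi_1|\le\la\xi\ra$,
\[
\|\la\xi\ra^{-\mez}Af\|_{\HH^s}^2=\int\underline M^{-2s}\la\xi\ra^{-1}|\xi_1|^2|f|^2\,d\xi\le\int\underline M^{-2s}\la\xi\ra\,|f|^2\,d\xi=\|\la\xi\ra^{\mez}f\|_{\HH^s}^2,
\]
with no loss in the Gaussian index and no absorption trick needed.

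Your treatment of formal self-adjointness in $\HH^s$ and in $\widetilde\HH^s$ is correct and is exactly the intended computation.
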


%NOTE: We need boundedness of $K$-- more precisely, of $KQ$--
%in appropriate space, and this means boundedness of $A_{21}$,
%{\it in the new coordinates for which first row of $Q$ is zero},
%not in the old coordinates discussed just below.
%
%(minor I think, but just noticed it's not done...-K.)
%OK, maybe this is already done somehow in assumption \ref{ass1}.
%I'll have to think about that... (didn't get there yet-  K.)

 \subsection{Kawashima multiplier} 

We next construct a Kawashima compensator as in \cite{K,MeZ1,MTZ},
but taking special care that the operator remain bounded
in this infinite-dimensional setting.
   
   \begin{prop}
  There are $C$, $\delta > 0$,  $\lambda\ge 0 $ and
there is a finite rank  operator    $ K  \in \scL (\HH^{-1}, \HH^1)  $ 
  such that  such    $K $ is skew symmetric in $\widetilde \HH^s$     and 
 satisfies 
\begin{equation}
\label{pr5_1}
\Re  (  K  A   -  \underline L  )   \ge \delta  \la \xi \ra  \Id. 
\end{equation}
meaning that 
  \begin{equation}
  \label{lowbd}
  c  \big\| \la \xi  \ra^\mez f  \big\|_{\widetilde \HH^s}^2 \le 
  \re \big( (K A - L_a ) f    , f  \big)_{\widetilde \HH^s }  \le  C \big\| \la \xi  \ra^\mez f \big\|_{\widetilde \HH^s}^2 . 
  \end{equation} 
\end{prop}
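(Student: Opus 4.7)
My plan is to construct $K$ as the sum of two pieces: an explicit ``macro-micro'' compensator built from $L_{22}^{-1}:=(\underline L|_\VV)^{-1}$, and a finite-dimensional Kawashima compensator acting on the five-dimensional fluid space $\UU$. Working in the $\HH^{\frac{1}{2}}$-orthogonal block decomposition $A_{jk}$ (with $j,k\in\{1,2\}$ labelling $\UU,\VV$), I will exploit the fact that $A_{11}$ is the symmetric Euler flux Jacobian at the reference state on the five-dimensional $\UU$, and that the effective viscosity obtained from first-order Chapman--Enskog reduction is $B_{\mathrm{eff}}:=-A_{12}L_{22}^{-1}A_{21}\ge 0$ on $\UU$, with kernel equal to the pure-density direction (this is the standard fluid Kawashima picture).

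First I would define the skew operator
\[
K_{\mathrm{mm}}\ :=\ L_{22}^{-1}A_{21}\PP_\UU\ -\ A_{12}L_{22}^{-1}\PP_\VV,
\]
which is manifestly skew-symmetric in $\HH^{\frac{1}{2}}$ since $L_{22}$ is self-adjoint there and $A_{21}=A_{12}^{*}$, and has rank $\le 10$. A direct block computation shows that the $\UU\to\UU$ component of $\Re(K_{\mathrm{mm}}A)$ is \emph{exactly} $B_{\mathrm{eff}}$. Next, because on $\UU$ the Euler flux $A_{11}$ has no eigenvector in $\ker B_{\mathrm{eff}}$ (a pure-density perturbation is not an eigenmode of Euler), classical finite-dimensional Kawashima theory furnishes a real skew $5\times 5$ matrix $K_0:\UU\to\UU$ with $\Re(K_0A_{11})+B_{\mathrm{eff}}\ge\delta\,\Id_\UU$. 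I then set
\[
K\ :=\ K_0\PP_\UU\ +\ K_{\mathrm{mm}},
\]
a finite-rank operator; the near-Maxwellian decay furnished by Remark~\ref{fastdecay} together with the smoothing in $\xi$ built into $L_{22}^{-1}$ (Proposition~\ref{propcoercs}) should yield $K\in\scL(\HH^{-1},\HH^{1})$ after a routine check of the five explicit rank-one kernels.

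For the coercivity \eqref{lowbd}, I would assemble $\Re(KA-\underline L)$ block by block: on the $\UU$-$\UU$ block one finds $\Re(K_0A_{11})+B_{\mathrm{eff}}\ge\delta\,\Id_\UU$ by construction, while on the $\VV$-$\VV$ block Proposition~\ref{propcoercs} yields $-L_{22}\gtrsim\la\xi\ra\Id_\VV$. The cross terms (arising from $A_{12},A_{21}$ and from the non-diagonal parts of $K_{\mathrm{mm}}A$) are absorbed by Young's inequality against the strong $\la\xi\ra$-weight available on $\VV$. The upper bound in \eqref{lowbd} follows from boundedness of $K$ together with Proposition~\ref{perturblem} applied at $a=\underline M$.

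The hard part will be ensuring skew-symmetry of $K$ \emph{in $\widetilde\HH^{s}$}: by construction $K$ is skew only in $\HH^{\frac{1}{2}}$, and by Remark~\ref{remcomm} the projections $\PP_\UU,\PP_\VV$ are no longer orthogonal in $\widetilde\HH^{s}$ for $s>\tfrac12$, so the block computation does not descend cleanly. I expect to resolve this by choosing the parameter $\lambda$ in \eqref{tildenorms} large enough that the $\lambda\HH^{\frac{1}{2}}$-piece dominates the $\HH^{s}$-piece, reducing the non-skew defect to a small perturbation that can be absorbed into the coercive $\la\xi\ra$-gap at the cost of shrinking $\delta$ slightly; alternatively one redefines $\PP_\UU$ as the $\widetilde\HH^{s}$-orthogonal projection and reruns the identical block argument with respect to that inner product, using finite-dimensionality of $\UU$ to control the change of metric.
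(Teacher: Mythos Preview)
Your construction is genuinely different from the paper's, and while the overall strategy is sound, there is a real gap.

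\textbf{Comparison with the paper.} The paper does \emph{not} insert $L_{22}^{-1}$ into the compensator. It simply takes
\[
K \;=\; \theta\bigl(K_{11}\PP_\UU + \PP_\UU A\PP_\VV - \PP_\VV A\PP_\UU\bigr),
\]
i.e.\ $K_{12}=\theta A_{12}$ rather than your $-A_{12}L_{22}^{-1}$. The resulting $\UU$--$\UU$ block of $\Re(KA)$ is then $\tfrac{1}{2}[K_{11},A_{11}]+A_{21}^{*}A_{21}$, not the Chapman--Enskog viscosity $B_{\mathrm{eff}}$. The genuine-coupling hypothesis needed is merely that $A$ has no eigenvector in $\UU$, which the paper verifies by a two-line polynomial argument; this is equivalent to your condition since $\ker B_{\mathrm{eff}}=\ker A_{21}$. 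The paper's route is more elementary: it never inverts $L_{22}$, never invokes the fluid viscosity structure, and the boundedness $K\in\scL(\HH^{-1},\HH^{1})$ is immediate because every piece of $K$ factors through the finite-dimensional $\UU$ with Maxwellian-decaying kernels. Your route buys a more ``physical'' compensator at the cost of extra mapping-property checks on $L_{22}^{-1}$.

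\textbf{The gap.} You are missing the small parameter $\theta$ that the paper places in front of $K$. Without it, your $\VV$--$\VV$ block of $\Re(KA-\underline L)$ is
\[
-\,L_{22}\;+\;\Re\bigl(L_{22}^{-1}A_{21}A_{12}\bigr),
\]
and there is no reason the finite-rank perturbation is dominated by the coercive part: the operator norm of $L_{22}^{-1}A_{21}A_{12}$ is an uncontrolled constant, while the coercivity constant $\delta$ of $-L_{22}$ is fixed. The same issue hits the $\UU$--$\VV$ cross terms $K_{0}A_{12}$, $-A_{12}L_{22}^{-1}A_{22}$, $L_{22}^{-1}A_{21}A_{11}$: Young's inequality alone will not absorb them into the $\langle\xi\rangle$-weighted $\VV$ term unless they carry a small prefactor. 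The paper handles exactly this by choosing $\theta$ small at the very end, after the finite-dimensional $K_{11}$ has been fixed; you should do the same (scale $K_{\mathrm{mm}}$ and then rescale $K_{0}$ accordingly, which is harmless since the finite-dimensional Kawashima construction is insensitive to positive scalar multiples of $B_{\mathrm{eff}}$).

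On the $\widetilde\HH^{s}$ skew-symmetry issue you flag: the paper in fact carries out the construction and the coercivity estimate in $\HH^{\frac{1}{2}}$ and then passes to $\widetilde\HH^{s}$ via the perturbation estimates of Proposition~\ref{perturblem}; your second proposed fix (redefining the projection) would change $K$ and is unnecessary.
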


\begin{proof}

{\bf a ) } We first check that the genuine coupling condition is satisfied, 
i.e. that there is no eigenvector of  $  A  $ in   
$\ker \underline L = \UU $. Indeed, using the basis $\phi_j$ of $\UU$ given in \eqref{orthobasis}, 
  an eigenvector of $A$ with eigenvalue $\tau $ in   $\UU  $ is a linear combination 
$\sum \alpha_j \phi_j$  such that the polynomial 
$$
(\xi_1 - \tau) \sum_{j=0}^4 \alpha_j \chi_j (\xi)   
$$
is identically zero. Equating to zero the term of degree 3 implies that $\alpha_4 = 0$.  
Equating to zero the coefficient of the terms of  degree 2  implies that $\alpha_j = 0$ 
for $j=1, 2, 3$,   and finally $\alpha_0= 0$. Thus  the property is  satisfied.

\medbreak

{\bf b) } We look for $K$ as 
 \begin{equation}
 K  =  \theta \big(  K_{11}      +   K_{12}  +   K_{21} \big) 
 \end{equation}
 with $\theta > 0$ a parameter to be chosen and 
 $$
  K_{12}    =  A_{12}  :=  \PP_\UU   A   \PP_\VV=A_{21}^*, 
  \qquad K_{21} = - K^*_{12} = - \PP_\VV  A \PP_\UU := - A_{21}  , 
 $$
 and 
 $$
 K_{11} = \PP_\UU  K_{11} \PP_\UU = - K_{11}^* . 
 $$
 Here $*$ means the adjoint with respect to the scalar product in $\HH$. 
 We have used Proposition~\ref{symmA}. 
 
Thus, with $A_{11} := \PP_\UU A  \PP_\UU$, 
 \begin{equation}
 \Re \PP_\UU K  A \PP_\UU =   \mez  [ K_{11} , A_{11} ]  +  A^*_{21} A_{21},  
 \end{equation} 
 The condition a)  means that 
 $A_{11}$ (restricted to $\UU $) has no eigenvector in $\ker A_{21}=
\ker A^*_{21} A_{21}$, with  $A^*_{21} A_{21}$ symmetric positive 
semidefinite and $A_{11}$ symmetric.
Since $\dim \UU $ is finite (equal to $5$),  
 this implies by the standard, finite-dimensional construction of
Kawashima et al \cite{K} that one can choose $K_{11}$  such that 
 $\Re \PP_\UU  K A  \PP_\UU $ is definite positive on $\UU$: there is $c_1 > 0$ such that
 \begin{equation}
 \big( \Re \PP_\UU K A \PP_\UU f  ,   \PP_\UU  f  \big)_\HH \ge c_1 \big\| \PP_\UU f \big\|^2_\HH. 
 \end{equation}
 Moreover, since $\dim \UU $ is finite, there is another $c_1 > 0$ such that 
  \begin{equation}
 \big( \Re \PP_\UU K A \PP_\UU f  ,   \PP_\UU  f  \big)_\HH \ge c_1 \big\| \PP_\UU f \big\|^2_{\HH^1}. 
 \end{equation}
 Thus, using Proposition~\ref{coercivmez}  for $a = \underline M$: 
 $$
\begin{aligned}
 \Re \big(( KA     - \underline L  ) f, f \big)_\HH  \ge \theta c_1 & \big\| \PP_\UU f \big\|^2_{\HH^1}  + 
 c  \big\| \PP_\VV f  \big\|^2_{\HH^1} 
 \\ &  -  \theta   C   \big\|   f  \big\| _{\HH^1}   \big\| \PP_\VV f \big\|_{\HH^1} 
 \end{aligned}
 $$
 with 
 $$
 C = \big\|  K_{11}  \PP_\UU  A  \PP_\VV  \big\|  +     \big\|  K_{12}  \PP_\VV A \PP_\VV  \big\|
 +  \big\|  K_{21}  \PP_\UU  A  \PP_\UU  \big\| +  \big\|  K_{21}  \PP_\UU  A  \PP_\VV  \big\|
 $$
 where the norms are taken in $\scL (\HH^1; \HH^{-1} )$.  
 All these operators have finite rank $\le n$ and  are bounded. 
 Thus if $\theta  $ is small enough,  this shows that
 $\Re (K  A  - \underline L)$ is definite positive  in the sense of \eqref{lowbd}.  
 Using the perturbation Lemma~\ref{perturblem} implies that the estimate remains
 true for $a$ satisfying \eqref{small1}. 
\end{proof} 

\begin{rem}\label{Kcomp}
\textup{
The construction above, by reduction to the equlibrium manifold, 
is essentially different from the original proof of \cite{K}
in the finite-dimensional case, which would yield a symmetrizer
of infinite rank.  
The advantage of finite rank is that we need not worry about boundedness
of the operator.
We note that this is related to methods in the Boltzmann
literature in which the Kawashima compensator is replaced by
estimates on a reduced Chapman-Enskog approximation such as
the Grad 
%CHANGED!
%11
$13$ moments model or the Navier--Stokes approximation,
%ENDCHANGED
again to avoid possible boundedness issues; 
see, e.g., \cite{G,LY}.
}

\textup{ See also the related construction
of \cite{GMWZ} in the case that $u$ is scalar, for which $K_{11}$
may be taken equal to zero.
We note that we could apply the same reduction argument to the
reduced problem and proceed by iteration to this scalar case, thus
obtaining an alternative proof in the finite-dimensional case as well.
}
\end{rem}

 %%%%%%%%%%%%%%%%%%%%%%%%%%%%%%
 
 \subsection{Reduction to bounded operators}
In the hard-sphere case \eqref{hs}, we may
rescale the equations to obtain a problem involving only 
{bounded operators}.
%TODO: put this in?  NO, just do later... -K
%For the treatment of general hard cutoff potentials, 
%see Section \ref{generalpot}. DONE.
% 
 We have $\HH^1 \subset \HH \subset \HH^{-1}$, bounded operators from 
 $\HH^1$ to $\HH^{-1}$ and we work with the scalar product of $\HH$. 
 We can multiply the equations on the left by $\la \xi \ra^{-1}$: 
 if $A \in \scL (\HH^1; \HH^{-1})  $  then 
 $$
\hat  A := \la \xi \ra^{-1} A \in \scL (\HH^1; \HH^1)
$$
 and 
 $$
 \big( \hat  A f , f \big)_{\HH^1}  =   \big( A f, f\big)_\HH  
 $$
 so that if $A$ is symmetric  in $\HH$, $\hat  A$ is symmetric in $\HH^1$. 
 
 Equivalently, we can make the change of variable 
 $f \mapsto \tilde f =  \la \xi \ra^{\mez} f $  from 
 $\HH^1$ to $\HH$ and define
 $$
 \tilde A \tilde f  :=  \la \xi \ra^{- \mez}  A f  =   \la \xi \ra^{- \mez}  A   \la \xi \ra^{- \mez} f.
 $$
 Then $\tilde A \in \scL (\HH; \HH)$ and $\tilde A$ is symmetric in $\HH$ if 
 $A$ is. 

By Corollary \ref{Qbd}, the corresponding collision operators
 $$
\hat  Q(f,f) := \la \xi \ra^{-1} Q(f,f) 
$$
and
 $$
 \tilde Q( \tilde f, \tilde f)  := \la \xi \ra^{- \mez}  Q(   \la \xi \ra^{- \mez} f,
\la \xi \ra^{- \mez} f)
 $$
by Corollary \ref{Qbd} are bounded as well:
%CHANGED: to be completely clear... (not standard notation)-K
%$\hat Q\in \scL (\HH^1; \HH^1)$ and $\tilde Q\in \scL (\HH; \HH)$.
$ \hat Q\in \scB (\HH^1; \HH^1) \; {\rm  and }\; \tilde Q\in \scB (\HH; \HH), $
where $ \scB (H; H' )$ denotes the space of continuous
bilinear forms from $H\to H'$, i.e., $B\in \scB(H;H')$ if and only if
\be\label{scB}
 \big\| B (g, h) \big\|_{H'}  \le  C_s 
  \big\| g \big\|_{H}  \big\| h \big\|_{H}  .
\ee
%ENDCHANGED:

 %======================================

 \subsection{The framework}

At this point, we have reduced to the following abstract problem,
with semilinear structure quite similar to that treated in the
finite-dimensional analysis of \cite{MeZ1}.
 %We  study the traveling-wave ODE  
Working in $\HH$ with operators $\tilde A$ and $\tilde Q$ and dropping
tildes,
we study the standing-wave ODE
\begin{equation}\label{relax}
AU'= Q(U, U),
\end{equation}
with $U$ taking its values in an infinite dimensional space $\HH$. 

\subsubsection{Assumptions on the full system}
%More precisely, we suppose that 
We make the following assumptions, verified above 
for the Boltzmann equation in the hard-sphere case
with $A$, $Q$ replaced by $\tilde A$, $\tilde Q$.
%(TODO: more specific cross refs?  NO, too spread out in the text
%I think... -K)

\begin{ass}\label{ass1}
(i)  $A$ is a  bounded self adjoint operator in a (real) Hilbert space $\HH$; 

(ii)  There is an orthogonal splitting 
$\HH = \UU \oplus \VV$  with  $\UU$ finite dimensional

%CHANGED: for clarity-K
%(iii)  $Q$ is bilinear, continuous and symmetric (in $U$) from $\HH \times \HH$ to $\VV$.  
(iii)  $Q$ is bilinear, continuous (in sense \eqref{scB}) 
and symmetric (in $U$) from $\HH \times \HH$ to $\VV$.  
%ENDCHANGED
\end{ass}

 For $U \in \HH$, we denote by  
 $L_U      $   the bounded operator  $ V \mapsto  2  Q(U, V)$, that is the differential 
 of $Q(U, U)$. 
We denote by $\PP_\UU$ and $\PP_{\VV}$ the orthogonal 
projectors on $\UU$ and $\VV$ respectively. 
We use the notations $U = u + v $, with $u = \PP_{\UU} U $
and $v = \PP_{\VV} U $.

  \begin{ass}\label{refass}
  We are given a reference state $ \underline U$ (in a smaller space $\MM  \subset \HH$) 
  such that 
   $\underline L = L_{\underline U}$ is self adjoint  with kernel $\UU$  and 
  $\underline L$ is definite negative on $\VV$. 
  
  \end{ass}

\begin{lem}\label{coerc1}
 There  are  $\delta > 0$, $\eps_0$   and $C \ge $ such that  for $a \in \MM$
  and $U \in \HH$: 
  \begin{equation}
  \label{infLa}
-    \re \big( L_a U, U \big)_{\HH}  \ge \delta \big\| \PP_{\VV} U \big\|^2_{\HH} 
-  C \eps  \big\| \PP_{\UU} U \big\|_{\HH}  \big\| \PP_{\VV} U \big\|_{\HH} 
  \end{equation} 
  provided that 
  \begin{equation}
  \label{small2}
  \big\| a - \underline M \big\|_{\HH}  \le \eps  \le \eps_0. 
  \end{equation}
  
  \end{lem}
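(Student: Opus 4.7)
The plan is to decompose $U = u + v$ with $u = \PP_\UU U$, $v = \PP_\VV U$, and exploit two crucial structural facts from Assumptions~\ref{ass1}--\ref{refass}: first, $Q$ takes values in $\VV$, so $L_a = 2 Q(a,\cdot)$ also maps $\HH \to \VV$; second, $\UU = \ker \underline L$, which means that any $L_a u$ differs from the vanishing quantity $\underline L u$ only by the perturbation $2 Q(a - \underline U, u)$. Writing $L_a = \underline L + (L_a - \underline L)$ reduces the question to combining the negative definiteness of $\underline L$ on $\VV$ with the fact that $L_a - \underline L = 2 Q(a - \underline U, \cdot)$ is $O(\eps)$ in operator norm, by the bilinear continuity of $Q$.

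Concretely, I would expand
\begin{equation*}
(L_a U, U)_\HH = (L_a u, u)_\HH + (L_a u, v)_\HH + (L_a v, u)_\HH + (L_a v, v)_\HH.
\end{equation*}
Since $L_a u,\ L_a v \in \VV$ and $u \in \UU$, the first and third terms vanish by orthogonality, leaving only $(L_a u, v)_\HH + (L_a v, v)_\HH$. For the cross term I use $\underline L u = 2 Q(\underline U, u) = 0$ to write
\begin{equation*}
(L_a u, v)_\HH = 2(Q(a - \underline U, u), v)_\HH,
\end{equation*}
whose modulus is bounded by $C \eps \| u \|_\HH \| v \|_\HH$ via Assumption~\ref{ass1}(iii) and \eqref{small2}.

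For the diagonal term, the reference Assumption~\ref{refass} gives $\delta_0 > 0$ with $-\re(\underline L v, v)_\HH \ge \delta_0 \|v\|_\HH^2$, while the perturbation satisfies
\begin{equation*}
\big|((L_a - \underline L) v, v)_\HH\big| = 2\big|(Q(a - \underline U, v), v)_\HH\big| \le C \eps \|v\|_\HH^2,
\end{equation*}
so $-\re(L_a v, v)_\HH \ge (\delta_0 - C\eps) \|v\|_\HH^2 \ge (\delta_0/2)\|v\|_\HH^2$ for $\eps \le \eps_0$ chosen small enough that $C \eps_0 \le \delta_0/2$. Combining with the cross-term bound yields \eqref{infLa} with $\delta = \delta_0/2$.

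There is no real obstacle here: the lemma is a bookkeeping transcription, to the abstract setting of Section~\ref{abframe}, of the perturbation argument already carried out in Proposition~\ref{perturblem} for the concrete Boltzmann operator. The only point requiring mild vigilance is tracking that the quadratic form $L_a$ sends $\HH$ to $\VV$ rather than merely to $\HH$, which is precisely what makes the block-diagonal terms $(L_a u, u)_\HH$ and $(L_a v, u)_\HH$ drop out and avoids a stray $\|u\|_\HH^2$ contribution on the right-hand side. All other estimates are immediate from bilinear continuity of $Q$ and the spectral gap built into Assumption~\ref{refass}.
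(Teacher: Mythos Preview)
Your proof is correct and follows essentially the same approach as the paper's: both exploit that $L_a$ maps $\HH$ into $\VV$ (so only the pairing against $\PP_\VV U$ survives), use $\underline L\,\PP_\UU U = 0$ together with the coercivity of $-\underline L$ on $\VV$, and bound $L_a - \underline L = 2Q(a - \underline U,\cdot)$ by $C\eps$ via the bilinear continuity of $Q$. The paper's version is slightly more compressed---it writes $(L_a U,U)_\HH = (L_a U,\PP_\VV U)_\HH$ directly rather than expanding into four blocks---but the ingredients and the logic are the same.
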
 

\begin{proof} By continuity of $Q$, \eqref{scB}, there is $C$ such that 
 \begin{equation}
  \label{compar}
   \big\|L_a  U  - \underline L u  \big\|_{\HH}  \le C 
    \big\| a - \underline M  \big\|_{\HH} \big\| U \big\|_{\HH} . 
  \end{equation} 
  Moreover, there is $\delta > 0$ such that 
  $$
  -    \big( \underline L U, U \big)_{\HH}  = 
   -    \big( \underline L U, \PP_\VV U \big)_{\HH}
   =  -    \big( \underline L \PP_\VV U, \PP_\VV U \big)_{\HH}  \ge \delta \big\| \PP_{\VV} U \big\|^2_{\HH} . 
  $$
  Since 
  $$
    \big( L_a U, U \big)_{\HH} =   \re \big( L_a U, \PP_\VV U \big)_{\HH} 
    $$
    the lemma follows. 
  \end{proof}

%(NOTE: new lemma for later... -K)

\begin{lem}\label{smoothman}
In an $\HH$-neighborhood of $\underline U$,
the zero set of $Q$ is given by a smooth (indeed $C^\infty$) 
manifold $M=\{U: \; v=v_*(u)\}$ with $v_*: \UU\to \VV$ smooth.
\end{lem}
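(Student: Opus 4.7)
The plan is to apply the Banach-space implicit function theorem to the smooth map
\[
F: \UU \times \VV \longrightarrow \VV, \qquad F(u,v) := Q(u+v,\,u+v),
\]
well defined since $Q$ takes values in $\VV$ by Assumption \ref{ass1}(iii). Writing $\underline U = \underline u + \underline v$ with $\underline u := \PP_\UU \underline U$ and $\underline v := \PP_\VV \underline U$, the reference state is an equilibrium (corresponding to a Maxwellian in the Boltzmann setting), so $F(\underline u, \underline v) = Q(\underline U,\underline U) = 0$. Because $Q$ is bounded bilinear, $F$ is a polynomial of degree two between Hilbert spaces, hence $C^\infty$ (indeed real-analytic), so no issue of regularity arises.

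The one hypothesis to verify is that the partial $v$-derivative
\[
\partial_v F(\underline u, \underline v)[h] \;=\; 2\,Q(\underline U, h) \;=\; L_{\underline U} h \;=\; \underline L h, \qquad h \in \VV,
\]
is a bounded isomorphism of $\VV$. By Assumption \ref{refass}, $\underline L$ is self-adjoint on $\HH$ with $\ker\underline L = \UU$ and negative definite on $\VV$; in particular there is $\delta > 0$ such that $-(\underline L h, h)_\HH \ge \delta\,\|h\|_\HH^2$ for all $h \in \VV$. This coercivity gives injectivity and closed range for $\underline L|_\VV$, and self-adjointness together with $\ker\underline L = \UU$ then forces $\mathrm{range}(\underline L|_\VV) = \VV^{\perp\perp}\cap\VV = \VV$, so $\underline L|_\VV$ is invertible. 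This is the only step of substance; everything else is formal.

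Once the IFT hypotheses are in place, I would conclude the existence of $\HH$-neighborhoods $\cW_\UU \ni \underline u$ in $\UU$ and $\cW_\VV \ni \underline v$ in $\VV$, and a unique $C^\infty$ map $v_*: \cW_\UU \to \cW_\VV$ with $v_*(\underline u) = \underline v$ and $F(u, v_*(u)) \equiv 0$, characterized by the property that $F(u,v) = 0$ on $\cW_\UU \times \cW_\VV$ if and only if $v = v_*(u)$. Setting $M := \{u + v_*(u) : u \in \cW_\UU\}$ produces the required graph manifold through $\underline U$, and the local uniqueness clause of the IFT ensures that every zero of $Q$ in a sufficiently small $\HH$-neighborhood of $\underline U$ lies on $M$. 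I do not expect any serious obstacle, since the three structural ingredients---smoothness of $Q$, coercivity of $-\underline L$ on $\VV$, and self-adjointness of $\underline L$---have all been established in the preceding sections.
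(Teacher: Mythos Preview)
Your proof is correct and follows essentially the same approach as the paper: invoke the Implicit Function Theorem, using that the bounded bilinear form $Q$ is $C^\infty$ and that $\partial_v F(\underline u,\underline v)=\underline L|_\VV$ is an isomorphism of $\VV$ by Assumption~\ref{refass}. The paper's version is a one-line citation of Assumption~\ref{refass} and the IFT, whereas you have spelled out the invertibility argument in detail, but the substance is identical.
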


\begin{proof}
Assumption \ref{refass}
and the Implicit Function Theorem, together with the observation that
$Q$ as a continuous biinear form (in sense \eqref{scB}) is $C^\infty$
in the Frechet sense.
\end{proof}
 
We further assume the Kawashima condition established in Proposition 
\ref{Kcomp}.
% some genuine coupling condition and more precisely

\begin{ass}
\label{assinf3}
There is a  skew symmetric bounded operator  
$  K   \in \scL (\HH)$  and  
%there is a constant $\gamma > 0$,  
a constant $\gamma > 0$ such that
\begin{equation}
\label{pr5}
\Re   K A -   \underline L    \ge \gamma \Id. 
\end{equation}
\end{ass}

Using \eqref{compar}, this implies
 
 \begin{lem}
 \label{kawa}
 There are $\gamma > 0$ and $\eps_ 0> 0$ such that for 
 $a \in \HH$ satisfying \eqref{small2} and  
 $U \in \HH$: 
   \begin{equation}
  \label{infkawa}
   \re \big( ( K A   -  L_a) U, U \big)_{\HH}  \ge \gamma \big\|   U \big\|^2_{\HH} 
  \end{equation} . 
 
 \end{lem}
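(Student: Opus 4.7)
The plan is a direct perturbation argument: since $K$ is a fixed bounded operator on $\HH$ (Assumption \ref{assinf3}) and $L_a$ differs from $\underline L$ only by a continuous linear perturbation controlled by $\|a - \underline M\|_\HH$ (see \eqref{compar}), the uniform coercivity of $\re(KA - \underline L)$ should pass to $\re(KA - L_a)$ once $\eps_0$ is chosen small enough to absorb the error.

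More concretely, I would write
\begin{equation*}
\re\bigl((KA - L_a)U, U\bigr)_\HH
= \re\bigl((KA - \underline L)U, U\bigr)_\HH
- \re\bigl((L_a - \underline L)U, U\bigr)_\HH.
\end{equation*}
The first term is bounded below by $\gamma \|U\|_\HH^2$ directly from Assumption \ref{assinf3} (where $\gamma>0$ is the constant provided there). For the second term, I would apply Cauchy--Schwarz in $\HH$ and invoke the perturbation estimate \eqref{compar}, which gives
\begin{equation*}
\bigl|\re\bigl((L_a - \underline L)U, U\bigr)_\HH\bigr|
\le \|L_a U - \underline L U\|_\HH\, \|U\|_\HH
\le C\,\|a - \underline M\|_\HH\, \|U\|_\HH^2
\le C\eps\, \|U\|_\HH^2,
\end{equation*}
using the smallness hypothesis \eqref{small2}.

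Combining these two bounds yields
\begin{equation*}
\re\bigl((KA - L_a)U, U\bigr)_\HH \ge (\gamma - C\eps)\,\|U\|_\HH^2,
\end{equation*}
so it suffices to choose $\eps_0 := \gamma/(2C)$ and replace the constant $\gamma$ in the conclusion by $\gamma/2$ (which I would still call $\gamma$, as is standard). There is no genuine obstacle here; the only mildly delicate point is that the coercivity in Assumption \ref{assinf3} is stated abstractly as an operator inequality on $\HH$, and one must observe that the inner product $\re(KAU,U)_\HH$ is well-defined and that the splitting into a $KA$ piece and an $L_a$ piece is meaningful without invoking unbounded Kawashima-type operators --- which is precisely the point of the finite-rank construction carried out in Remark \ref{Kcomp} and embedded in Assumption \ref{assinf3}. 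Once that is noted, the lemma reduces to the two-line estimate above.
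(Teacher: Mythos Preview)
Your proposal is correct and follows exactly the approach the paper indicates: the paper simply states ``Using \eqref{compar}, this implies'' before Lemma~\ref{kawa}, and your argument is precisely the spelled-out version of that one-line reduction---split off $\underline L$, invoke Assumption~\ref{assinf3} for the coercive part, and absorb the $(L_a-\underline L)$ perturbation via \eqref{compar} and smallness of~$\eps$.
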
 

\subsubsection{Assumptions on the reduced system}\label{ceframe}
%\subsubsection{Chapman--Enskog approximation}\label{ceframe}

Coordinatizing $U \in \HH$ as 
\begin{equation}
\label{pr7_1}
 U=\begin{pmatrix} u\\v \end{pmatrix}
= \begin{pmatrix} P_\UU U\\P_\VV U \end{pmatrix},
 \end{equation}
we have 
\begin{equation}\label{pr9}
 A= \begin{pmatrix} A_{11}& A_{12}\\A_{21}& A_{22}\end{pmatrix},
\quad Q=\begin{pmatrix} 0\\q(U, U)\end{pmatrix}
\end{equation}
 with $A_{11} \in \scL (\UU; \UU) $,  $A_{12} \in \scL (\VV; \UU) $ etc. 
 We use the notation 
\begin{equation}
\label{pr10}
%TODO: h everywhere now... confusion with distribution f otherwise..
h(u,v):= A_{11}u + A_{12}v \in \UU.
\end{equation}
 Finally, the equilibria are parametrized by $u$: 
  \begin{equation}
\label{pr7}
 M(u)  =\begin{pmatrix} u\\ v_*(u)  \end{pmatrix},  
 \end{equation}
where $v_*$ is the smooth mapping from a neighborhood of 
$\underline u$ to a neighborhood of $\underline v = v_* (\underline u)$
in $\VV$, as described in Lemma \ref{smoothman}.
 
 Recall from \cite{Y} that the reduced, Navier--Stokes type equations
obtained by Chapman--Enskog expansions are
\begin{equation}\label{ce} 
h_*(u)'= (b_*(u)u')',
\end{equation}
where
\begin{eqnarray}
\label{cevalues} 
&h_*(u)&:=h(u,v_*(u))=A_{11}u+A_{12}v_*(u),
\\
\label{bstar}
&b_*(u)&:= -A_{12} c_* (u) 
\end{eqnarray}
with
\begin{equation}
\label{cstar}
\begin{aligned}
c_* (u) := \partial_v  &q^{-1} (u,v_*(u))&\\
&\Big(A_{21} +  A_{22}d v_*(u)    - d v_*(u) (A_{11}+ A_{12} d v_*(u) 
\Big). 
\end{aligned}
\end{equation}
 Note also, by the Implicit Function Theorem, that 
$ d v_*(u) = -   \partial_v q^{-1}\partial_u q (u, v_*(u)). $

An important property of the Chapman-Enskog approximation, following
either by direct computation or by coordinate-independence of the
physical derivation,
is that the form \eqref{cevalues}--\eqref{cstar}
of the equations is coordinate invariant, changing tensorially
with respect to constant linear coordinate changes; 
moreover, the change in functions $h_*$, $b_*$ due to a constant linear
coordinate change may be computed directly from \eqref{ce}
using the coordinate change in $u$ alone.
From this we find in the Boltzmann case that \eqref{ce} is equivalent
through a constant linear coordinate change to the Navier--Stokes
equations \eqref{NS1} with monatomic ideal gas equation of state and
viscosity and heat conduction coefficients satisfying \eqref{coeffs}.
We make the following assumptions on the reduced system, verified
for the Navier--Stokes equations (hence satisfied for the Boltzmann
equation) in \cite{MaZ3}.

\begin{ass}\label{goodred}
	(i) There exists $s(u)$ symmetric positive
definite such that $s\, dh_*$ is symmetric and $sb_*$ is
symmetric positive semidefinite.

(ii) There is no eigenvector of $dh_*$ in $\ker b_*$.

(iii) The matrix $b_*(u)$ has constant left kernel.

(iv) For all values of $u$,  $\ker \pi_* dh_*(u) \cap \ker b_*(u) = \{ 0 \}$,
%CHANGED: new!
where $\pi_*(u)$ is the zero eigenprojection associated with $b_*(u)$.   
%ENDCHANGED

\end{ass}

Finally, we assume that the classical theory of weak shocks 
can be applied to  \eqref{ce}, requiring that the 
flux $f_*$ have a  genuinely nonlinear eigenvalue near $0$.

\begin{ass}\label{profass}
 
In a neighborhood $\cU_*$ of   a given base state $u_0$,  
$dh_*$ has a simple eigenvalue $\alpha$ near zero, with $\alpha (u_0) = 0$, and such that the
associated hyperbolic characteristic field is genuinely 
nonlinear, i.e., after a choice of orientation, $\nabla \alpha \cdot r(u_0) <  0$, where
$r$ denotes the eigendirection associated with $\alpha$.
 
\end{ass}

\begin{rem}
\textup{
As discussed in \cite{Y}, Assumptions \ref{goodred}(i)--(ii)
hold in great generality.
Assumptions \ref{goodred}(iii)-(iv) must be checked in individual cases.
}
\end{rem}

 \subsection{The basic estimate}\label{basic}

With these preparations, we can establish existence by an argument
almost identical to that used in \cite{MeZ1} to treat the finite-dimensional
case: indeed, somewhat simpler.
The single difference is that in carrying out the basic symmetric energy
estimates controlling microscopic variables
 we do not attempt to exactly symmetrize
$L_a$ at each $x$ value as was done in \cite{MeZ1}, but only use
the fact that each $L_a$ is approximately symmetric by construction.
This is important in the infinite-dimensional case, since exact
symmetrization can (and does in the Boltzmann case) introduce unbounded
commutator terms that wreck the argument.
%NOTE: indeed, I think this is an error (reparable as we see) in \cite{LY}...
%
To isolate this important technical point, we
carry out the key estimate here, before describing the rest of the
argument.
 
 We consider the equation 
 \begin{equation}
 \label{lineareq}
 A \D_x U  -  L_a U  = F   
 \end{equation}
with $a = a(x)$  satisfying
\begin{eqnarray}
\label{small3}
   \big\| \D_x^k \big( a (x) - \underline M\big)  \big\|_{\HH}  \le 
   C_k \eps^{k+1}  e^{ - \eps \theta \la x \ra }. 
\end{eqnarray} 

We assume that 
\begin{equation}
\label{specialF}
\PP_\UU F  = \eps h  + \D_x f. 
\end{equation}

\begin{lem}
\label{basicest}
There is a constant $C$ such that for $\eps$ sufficiently small, 
one has 
\begin{equation}\label{sharph1eq}
\| U'  \|_{L^2} + \| \PP_{\VV} U \|_{L^2}  \le C \big(     
\|  f  \|_{H^2}   +   \| h \|_{H^1} + \| g  \|_{H^1}   
+ \eps \| \PP_{\UU} U \|_{L^2} \big). 
\end{equation}
\end{lem}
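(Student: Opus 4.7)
The plan is to combine two classical ingredients: a direct energy estimate using coercivity of $-L_a$ on $\VV$ (Lemma~\ref{coerc1}) to control $\|\PP_\VV U\|_{L^2}$, and a Kawashima estimate at the derivative level (Lemma~\ref{kawa}) to control $\|U'\|_{L^2}$. The Kawashima step is essential because $L_a$ offers no dissipation on the macroscopic part $\PP_\UU U$, while the special structure $\PP_\UU F = \eps h + \partial_x f$ is exploited by integrating the $\partial_x f$ piece by parts, shifting the derivative onto $U$ (or $KU'$) and thereby avoiding a loss of regularity. \emph{Step 1 (dissipation).} Pair \eqref{lineareq} with $U$ and integrate over $\RR$. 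Self-adjointness of $A$ (Proposition~\ref{symmA}) and decay at $\pm\infty$ kill $\int\langle AU',U\rangle\,dx=\mez\int\partial_x\langle AU,U\rangle\,dx$, so $-\int \langle L_a U,U\rangle\,dx=\int \langle F,U\rangle\,dx$. The left side is bounded below via Lemma~\ref{coerc1}, while on the right the $\partial_x f$ piece of $\PP_\UU F$ is integrated by parts onto $\PP_\UU U$. Cauchy--Schwarz and Young yield, with $g:=\PP_\VV F$ and small $\mu>0$ to be absorbed later,
\begin{equation*}
\|\PP_\VV U\|_{L^2}^2 \lesssim \eps^2\|\PP_\UU U\|_{L^2}^2 + \|h\|_{L^2}^2 + \|f\|_{L^2}^2 + \|g\|_{L^2}^2 + \mu\|\PP_\UU U'\|_{L^2}^2.
\end{equation*}

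\emph{Step 2 (Kawashima at the derivative level).} Differentiate \eqref{lineareq} to $AU''-L_a U'=L_a'U+F'$ and pair this first with $KU$, where $K$ is the skew Kawashima compensator of Assumption~\ref{assinf3}, and then with $U'$. Using skew-symmetry of $K$ together with self-adjointness of $A$, integration by parts gives
\begin{equation*}
\int \langle AU'',KU\rangle\,dx = -\int\langle AU',KU'\rangle\,dx = \int\langle KAU',U'\rangle\,dx,
\end{equation*}
while $\int\langle AU'',U'\rangle\,dx = \mez\int\partial_x\langle AU',U'\rangle\,dx = 0$ as in Step~1. Combining the two resulting identities produces
\begin{equation*}
\int \langle (KA-L_a)U',U'\rangle\,dx = \int\langle L_a U',KU\rangle\,dx+\int\langle L_a'U+F',KU\rangle\,dx+\int\langle L_a'U+F',U'\rangle\,dx,
\end{equation*}
and Lemma~\ref{kawa} applied pointwise in $x$ to $U'(x)$ bounds the left side below by $\gamma\|U'\|_{L^2}^2$.

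\emph{Step 3 (right-hand side estimates and closure).} Each term on the right is controlled using: (i) $\|L_a U'\|_\HH\lesssim \|\PP_\VV U'\|_\HH+\eps\|\PP_\UU U'\|_\HH$, since $\underline L=\underline L\PP_\VV$ and $\|L_a-\underline L\|=O(\eps)$ by \eqref{compar}; (ii) $\|L_a'U\|_\HH\lesssim \eps^2\|U\|_\HH$, from the bound $\|\partial_x a\|_\HH \lesssim \eps^2$ furnished by \eqref{small3}; (iii) $\int\langle F',KU\rangle\,dx$ is integrated by parts onto $KU'$, giving $\|F\|_{L^2}\|U'\|_{L^2}\le(\eps\|h\|+\|f'\|+\|g\|)\|U'\|_{L^2}$; (iv) $\int\langle F',U'\rangle\,dx$ is split using $\PP_\UU F'=\eps h'+f''$ and $\PP_\VV F'=g'$, producing $(\eps\|h'\|+\|f''\|)\|\PP_\UU U'\|_{L^2}+\|g'\|_{L^2}\|\PP_\VV U'\|_{L^2}$. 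Young absorbs small multiples of $\|U'\|_{L^2}^2$ and $\|\PP_\VV U'\|_{L^2}^2$ into the left, yielding
\begin{equation*}
\|U'\|_{L^2}^2 \lesssim \|f\|_{H^2}^2+\|h\|_{H^1}^2+\|g\|_{H^1}^2+\eps^2\bigl(\|\PP_\UU U\|_{L^2}^2+\|\PP_\VV U\|_{L^2}^2\bigr),
\end{equation*}
and adding a large multiple of the Step~1 inequality absorbs the $\eps^2\|\PP_\VV U\|^2$ term, producing \eqref{sharph1eq}. The main obstacle is the Kawashima pairing in Step~2: we must extract $\int\langle KAU',U'\rangle\,dx$ without symmetrizing $L_a$, since in the Boltzmann context any exact symmetrization would introduce unbounded commutators and wreck the argument. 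This is possible precisely because Assumption~\ref{assinf3} furnishes $K$ as a \emph{bounded} operator on $\HH$ (finite-rank, by Proposition~\ref{Kcomp}), so the only symmetrization used is the exactly self-adjoint $\underline L$ entering through Lemma~\ref{kawa}.
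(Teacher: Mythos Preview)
Your overall strategy---coercivity of $-L_a$ for $\PP_\VV U$, plus a Kawashima pairing at the derivative level for $U'$---matches the paper's, but the execution in Step~3(i) has a real gap. When you pair the \emph{differentiated} equation with $KU$, the cross term is $\int\langle L_aU',KU\rangle$, and you estimate it directly by $\|L_aU'\|\,\|KU\|\lesssim(\|\PP_\VV U'\|+\eps\|\PP_\UU U'\|)\,\|U\|$. After Young, the piece $\|\PP_\VV U'\|\,\|\PP_\UU U\|$ leaves a term $C\|\PP_\UU U\|_{L^2}^2$ with \emph{no} factor of $\eps$ on the right side; nothing on your left side can absorb it, so the claimed final inequality with $\eps^2\|\PP_\UU U\|^2$ does not follow.

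The fix is a single integration by parts: write
\[
\int\langle L_aU',KU\rangle\,dx
=-\int\langle L_aU,KU'\rangle\,dx-\int\langle L_{a'}U,KU\rangle\,dx,
\]
using $(L_aU)'=L_aU'+L_{a'}U$. Now $\|L_aU\|\lesssim\|\PP_\VV U\|+\eps\|\PP_\UU U\|$ (by \eqref{almostblock2}) is paired with $\|U'\|$, and both factors are controlled on the left; the commutator term is $O(\eps^2)\|U\|^2$ by \eqref{small3}. Equivalently, pair the \emph{undifferentiated} equation with $KU'$ rather than the differentiated one with $KU$. This is precisely what the paper's symmetrizer $\cS=\partial_x^2+\partial_x\circ K-\lambda$ accomplishes: the $\partial_x\circ K$ piece, applied to the equation and paired with $U$, yields $(KL_aU,U')$ after one integration by parts, so that $L_a$ acts on $U$ (not $U'$) and the other factor is $U'$---both good quantities. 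With this correction your argument closes and is essentially the paper's proof, just organized as two separate pairings rather than a single symmetrizer.
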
 

 Here, the norms $L^2$, $H^1$ etc denote the norms in 
 $L^2 (\RR ; \HH)$, $H^1 (\RR ; \HH)$ etc.

\begin{proof}
 
 Introduce the symmetrizer
\begin{equation} 
\label{def615}
\cS = \D_x^2    + \D_x \circ  K   -  \lambda  \Id .  
\end{equation}
One has 
$$
\begin{aligned}
& \re \D_x^2  (A\D_x  -      L_a)  =       -  \re   \D_x  \circ  L_a  \circ    \D_x  -  \re \D_x  \circ L_{\D_x a}    
\\
& \Re \D_x \circ K  (A\D_x -  L_a )  =  \D_x \circ  \Re KA \circ  \D_x  -   \re \D_x \circ K  L_a  
\\
&  \Re     ( A\D_x -    L_a  )    =     -  \Re  L_a
\end{aligned}
$$
where 
$\re T = \mez (T + T^*)$ and the adjoint is taken 
in $L^2 (\RR ; \HH)$. We   have used that $   [\D_x, L_a] = L_{\D_x a} $  by linearity of  $L$ with respect 
to $a$. 
Thus 
$$
\begin{aligned}
\Re \cS  \circ ( A\D_x -    L_a )  =  & \D_x \circ (\Re AK  - L_a ) \circ \D_x   -
   \lambda \re L_a     
\\ 
&    -   \Re \D_x \circ  L_{\D_x a}  -   \Re \D_x \circ K  L_a. 
\end{aligned}
$$
Therefore, for  $U \in H^2 (\RR)$,  \eqref{infLa}, \eqref{infkawa}    and the continuity 
of $K$ and $Q$ imply that 
$$
\begin{aligned}
   \Re ( \cS   F , U)_{L^2}   \ge & \ \gamma  \| \D_x U \|^2_{L^2} + 
\lambda  \big( \delta   \|  \PP_\VV U  \|^2_{L^2}   -  C \eps  \|  \PP_\UU U  \|_{L^2} 
 \|  \PP_VV U  \|_{L^2}\big) 
\\
& -  C  \| \D_x a  \|_{L^\infty} \|  U \|_{L^2} \| \D_x U \|_{L^2}  -  C  \|\D_x  U \|_{L^2} 
\|  L_a U  \|_{L^2} . 
\end{aligned}
$$
We note that 
\begin{equation}
\label{almostblock}
L_a U =  \underline L \PP_{\VV} U  +  ( L_a - \underline L) U 
\end{equation}
Therefore, 
\begin{equation}
\label{almostblock2}
\big\|  L_a U \big\|_{HH}    \lesssim  \big\| \PP_\VV U \big\|_{HH}   + 
\eps \big\|  \PP_\UU U \big\|_{HH} 
\end{equation}
Taking  $\lambda$ large enough
and using \eqref{small3} 
yields
$$
\| U' \|^2_{L^2} + \| \PP_\VV U  \|_{L^2}^2 \lesssim     \Re ( \cS   F , U)_{L^2}   +  
\eps  \| \PP_\UU U \|_{L^2} \big(  \| \PP_\VV U \|_{L^2} +  \| U' \|_{L^2} \big) .   
$$

In the opposite direction,   
$$
\begin{aligned}
   \Re ( \cS F , U)_{L^2}   \le      & \| \D_x  U \|_{L^2} \big( 
 \| \D_x F  \|_{L^2}  + \| K \|  \| F \|_{L^2} \big) 
\\
& + \lambda \big(\eps    \|  h  \|_{L^2}   \| \PP_\UU  u \|_{L^2}  
  +      \|  f \|_{L^2}   \| \PP_UU  \D_x U   \|_{L^2}  
\\
& \hskip 4cm +       \| \PP_\VV F   \|_{L^2} \|  \PP_\VV U\|_{L^2}  \big).  
\end{aligned}
$$
The estimate \eqref{sharph1eq}  follows provided that $\eps$  is small enough. 

This proves the lemma under the additional assumption that $U \in H^2$. 
When $U\in H^1$, the estimate follows using Friedrichs mollifiers. 
\end{proof}

%\begin{rem}
%\textup{
%The above analysis corresponds to coordinatizing with respect to
%a fixed global Maxwellian.
%}
%\end{rem}
%
%(TODO: put as rmk?  maybe not interesting though... remove in
%final version I think-K)
%NOTE: Alternatively, one could make the coordinate transformation
%$v\to \tilde v:=v-v_*(u)$ as in \cite{MeZ1}, thus block-diagonalizing but not
%symmetrizing $Q$.  This of course is perfectly bounded and ok, but
%not necessary.  This is essentially what is done in Liu-Lien-Yang paper,
%where they coordinatize with respect to local Maxwellian (the block-diag
%step) but effectively do approximate symmetrization by referencing in
%their estimates always to a fixed global Maxwellian.
%In \cite{LY} it is not clear
%whether they reference to local or global Maxwellian, and so this
%point is not clearly addressed.
%DONE (removed).
 
\section{Basic $L^2$ result}\label{basicresult}

We now describe a simpler version of our main result, carried
out in the $L^2$ norm $\HH$.
For clarity of exposition, we carry out the entire argument in
this more transparent context, indicating afterward in Section
\ref{othernorms} how to extend to the general (pointwise,
higher weight) norms described in Theorem \ref{mainthm}.
%and in Section \ref{positivity} how to obtain positivity of solutions.

\subsection{Chapman--Enskog approximation} \label{CEapprox}

Integrating the first equation of \eqref{relax} and 
noticing that the end states $(u_\pm, v_\pm) $ must be equilibria and thus satisfy 
$v_\pm = v_*(u_\pm)$,  we obtain
\begin{equation}\label{intprof}
\begin{aligned}
A_{11} u + A_{12} v &=  f_*(u_\pm),\\
A_{21}u'+ A_{22}v'&=q(u,v).
\end{aligned}
\end{equation}

Because $f$ is linear,  the first equation reads
\begin{equation}\label{T1}
f_*(u) + A_{12}(v-v_*(u))  = f_*(u_\pm).
\end{equation}
The idea of Chapman--Enskog approximation is that 
$v -  v_*(u)$ is small  (compared to the fluctuations $u - u_\pm$). Taylor expanding the second equation, we obtain
$$
\begin{aligned}
(A_{21}+A_{22}dv_*(u)) u' + A_{22}(v-v_*(u))'&=
\partial_v q(u,v_*(u))(v-v_*(u)) 
\\
&\quad + O(|v-v_*(u)|^2),    
\end{aligned}
$$
or inverting $\partial_v q$  
\begin{equation}\label{T2}
\begin{aligned}
v-v_*(u)  =  & \ \partial_v q^{-1} (u, v_*(u))   \big(A_{21} + A_{22} d v_*(u) \big) 
 u' \\
& + O(|v-v_*(u)|^2) + O(|(v-v_*(u))'|). 
\end{aligned}
\end{equation}
The derivative of \eqref{T1} implies that 
$$
 \big( A_{11} u + A_{12} dv_* (u) \big) u'  =  O(|(v-v_*(u))'|) . 
$$
 Therefore,  \eqref{T2} can be replaced by 
 \begin{equation}\label{T2b}
\begin{aligned}
v-v_*(u)&= c_* (u) u' 
   + O(|v-v_*(u)|^2) + O(|(v-v_*(u))'|),
\end{aligned}
\end{equation}
where  $c_*$ is defined at \eqref{cstar}. 
Substituting in \eqref{T1},  we thus obtain the approximate viscous profile ODE
\begin{equation}\label{approxprof}
\begin{aligned}
b_*(u)u'&= f_*(u) -f_*(u_\pm) 
+ O(|v-v_*(u)|^2) + O(|(v-v_*(u))'|),
%NO- still not there!
%+ O(|u'||v-v_*(u)|) 
\end{aligned}
\end{equation}
where $b_*$ is as defined in \eqref{bstar}.

Motivated by \eqref{T2}--\eqref{approxprof}, we define an approximate
solution $(\bar u_{NS}, \bar v_{NS})$ of \eqref{intprof} by choosing 
$\bar u_{NS}$  as a solution of 
\begin{equation}
\label{NS}
b_*(\bar u_{NS})\bar u_{NS}' = f_*(\bar u_{NS}) -f_*(u_\pm),
\end{equation}
and $\bar v_{NS}$  as the first approximation given by \eqref{T2} 
\begin{equation}
\label{NSv}
\begin{aligned}\bar v_{NS} -v_*(\bar u_{NS})  =    c_* (\bar u_{NS}) 
 \bar u_{NS}'.
\end{aligned}
\end{equation}

Small amplitude shock profiles   solutions of \eqref{NS}  are constructed 
using the center manifold  analysis of \cite{Pe}
under conditions (i)-(iv) of Assumption \ref{goodred}; see discussion  in 
\cite{MaZ5}.

\begin{prop}[\cite{MaZ5}]\label{NSprofbds} Under Assumptions~\ref{goodred}
and \ref{profass}, 
in a neighborhood of 
$(u_0, u_0)$ in $\RR^n \times \RR^n$, 
there is a smooth  manifold $\cS$ of dimension $n$  passing through $(u_0, u_0)$,  such that 
for $(u_-, u_+) \in \cS$ with   amplitude $\eps:=|u_+ -u_-| > 0$ 
sufficiently small, and direction $(u_+-u_-)/\eps $ sufficiently close
to $r(u_0)$,   the zero speed shock profile equation   \eqref{NS} has  a unique (up to translation) 
solution   $\bar u_{NS}$ in $\cU_*$. 
The shock profile is necessarily of {\rm Lax type}: i.e., with
dimensions of the unstable subspace of $dh_*(u_-)$
and the stable subspace of $dh_*(u_+)$ summing to one plus the
dimension of $u$, that is $n+1$.

Moreover, 
there is  $\theta>0$ and for all $k$ there is $C_k $ independent of $(u_-, u_+) $ and $\eps$,   
such that 
\begin{equation}\label{NSbds}
|\partial_x^k (\bar u_{NS}-u_\pm)|\le C_k \eps^{k+1}e^{-\theta \eps|x|},
\quad x\gtrless 0. 
\end{equation}
\end{prop}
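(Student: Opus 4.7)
The plan is to prove this as a reduction to a center-manifold problem of classical Majda--Pego / Pego type, following the outline of \cite{Pe,MaZ5}. First I would recast \eqref{NS} as a first-order system in the phase variables $(u,w)$, where $w:=b_*(u)u'$ represents the flux deviation, so that the system reads
\begin{equation*}
u' = b_*(u)^+\, w + \ker b_*(u)\text{-component}, \qquad w' = 0
\end{equation*}
in the nondegenerate directions, while in the kernel directions of $b_*$ the profile equation is purely algebraic, namely $\pi_*(u)\bigl(f_*(u)-f_*(u_\pm)\bigr)=0$. Using Assumption~\ref{goodred}(iii) (constant left kernel of $b_*$) together with Assumption~\ref{goodred}(iv) (transversality $\ker \pi_*\,dh_*\cap \ker b_*=\{0\}$), the implicit function theorem lets me solve for the kernel components of $u$ in terms of the nondegenerate ones, reducing to a nondegenerate first-order ODE system on a submanifold whose dimension equals $\mathrm{rank}\,b_* + (\dim \ker b_*) = n$ once we add the conserved flux parameters.

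Next I would linearize at $u=u_0$, $w=0$. Under Assumption~\ref{goodred}(i) (simultaneous symmetrizability of $dh_*$ and $b_*$) the linearized system has spectrum split into a stable, an unstable, and a center subspace; the center subspace has dimension $n$ and corresponds exactly to the manifold of constant equilibria. Standard invariant manifold theory then gives a smooth center manifold $\mathcal{M}_c$, invariant under the flow, on which the dynamics reduces to an $n$-dimensional ODE whose linear part is $dh_*(u_0)$ restricted to the slow variables. Because $dh_*$ by Assumption~\ref{profass} has a simple eigenvalue $\alpha$ near zero crossing transversally through $0$, a further reduction to a one-dimensional center manifold inside $\mathcal{M}_c$ yields a scalar ODE $\dot z = \alpha(z) z + O(z^3)$ after choosing $z$ to parametrize the genuinely nonlinear direction. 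Genuine nonlinearity $\nabla\alpha\cdot r(u_0)<0$ turns this into a scalar saddle-type equation $\dot z = -cz^2 + \cdots$ with $c>0$, which admits a one-parameter family of heteroclinic connections between nearby pairs of rest points $z_-,z_+$ satisfying $z_+<0<z_-$ (or vice versa, fixing orientation).

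I would then identify the manifold $\mathcal{S}$ of admissible endstate pairs $(u_-,u_+)$. The Rankine--Hugoniot relation $f_*(u_-)=f_*(u_+)$ (with zero shock speed) cuts out an $n$-dimensional manifold through $(u_0,u_0)$ by the implicit function theorem, using that $dh_*(u_0)$ has a simple zero eigenvalue: away from the diagonal, this is the classical Hugoniot locus for the genuinely nonlinear field $\alpha$. Restricting to amplitudes $\eps=|u_+-u_-|$ small and directions $(u_+-u_-)/\eps$ close to $r(u_0)$ selects exactly one branch of this manifold, and the center-manifold analysis above produces a unique (up to translation) heteroclinic orbit $\bar u_{NS}$ connecting $u_-$ to $u_+$, lying entirely within the chart $\cU_*$. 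The Lax type assertion follows by counting: on each side, the stable/unstable splittings at $u_\pm$ of the reduced ODE linearization correspond to the $+$ (resp.\ $-$) hyperbolic characteristics of $dh_*$ plus the parabolic modes, and since the connection must use exactly one dimension of freedom for translation, the sum of unstable dimension at $u_-$ and stable dimension at $u_+$ is $n+1$.

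Finally I would obtain the sharp bounds \eqref{NSbds}. The scalar reduced ODE $\dot z = -c\eps z + O(z^2)$, after rescaling $z=\eps \tilde z$, $x=\tilde x/\eps$, becomes $\eps$-independent to leading order and has the explicit hyperbolic-tangent profile with decay rate $\sim \eps$; pulling back gives $|\bar u_{NS}-u_\pm|\le C\eps e^{-\theta \eps|x|}$, and successive differentiation of the profile equation combined with smoothness of $b_*,f_*,v_*$ yields the $k$-th order bounds with the same exponential rate (the extra $\eps^k$ factor coming from the rescaling). The hard part, both conceptually and technically, is managing the degeneracy of $b_*$ in conjunction with Assumption~\ref{goodred}(iv) to justify the reduction to a nondegenerate first-order system before invoking the center manifold theorem; once that algebraic/geometric step is done cleanly, the rest is a mild extension of Pego's scalar argument. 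Since this is precisely what is carried out in \cite{MaZ5}, I would in practice cite that reference after sketching the above reduction.
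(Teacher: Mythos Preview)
Your proposal is correct and follows essentially the same approach the paper invokes: the paper does not give its own proof of this proposition but simply cites the center-manifold analysis of \cite{Pe} under Assumptions~\ref{goodred}(i)--(iv), referring to \cite{MaZ5} for the discussion in the degenerate-$b_*$ setting, which is exactly the reduction you sketch. Your outline of eliminating the $\ker b_*$ components via Assumptions~\ref{goodred}(iii)--(iv), reducing to a one-dimensional center manifold via the simple crossing eigenvalue $\alpha$, and reading off the decay bounds \eqref{NSbds} by rescaling the resulting scalar saddle equation is the standard Pego argument and matches what the paper intends.
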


We denote by 
 $\cS_+$  the set  of $(u_-, u_+) \in \cS $  with  amplitude $\eps:=|u_+ -u_-| > 0$ 
sufficiently small  and direction $(u_+-u_-)/\eps $ sufficiently close
to $r(u_0)$ such that the profile $\bar u_{NS}$ exists.  
Given $(u_-, u_+) \in \cS_+  $ with associated profile $\bar u_{NS}$, 
we define $\bar v_{NS} $ by \eqref{NSv} and 
    \begin{equation}
    \label{NSU}
    \bar U_{NS} := (\bar u_{NS}, \bar v_{NS}).
    \end{equation}
    It  is an approximate solution of \eqref{intprof} in the following sense: 

\begin{cor}\label{redbds}
For   $(u_-, u_+) \in \cS_+$, 
\begin{equation} 
\label{exactfeq}
 A_{11} \bar u_{NS}  + A_{12} \bar v_{NS} - f_*(u\pm) = 0
 \end{equation} 
 and 
 $$
 \cR_v  := A_{21}\bar u_{NS}' + A_{22}\bar v_{NS}'-q(\bar u_{NS},\bar v_{NS})  
$$ 
satisfies 
\begin{equation}\label{eq:resbds}
|   \D_x^k \cR_v  (x) |  \le  C_k \eps^{k+3}e^{-\theta \eps|x|} , \quad x\gtrless 0
\end{equation}
where $C_k$   is  independent of $(u_-, u_+) $ and  $\eps=|u_+ -u_-|$. 
 \end{cor}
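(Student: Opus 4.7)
\medskip

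The proof splits naturally into verifying the exact identity \eqref{exactfeq} and then computing the size of the residual $\cR_v$.

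\textbf{Step 1: Verification of \eqref{exactfeq}.} Since $h_*(u) = A_{11} u + A_{12} v_*(u)$ by \eqref{cevalues} and since $f_* = h_*$, I would write
\[
A_{11}\bar u_{NS} + A_{12}\bar v_{NS}
 = f_*(\bar u_{NS}) + A_{12}\big(\bar v_{NS} - v_*(\bar u_{NS})\big)
 = f_*(\bar u_{NS}) + A_{12} c_*(\bar u_{NS}) \bar u_{NS}'
\]
using the defining relation \eqref{NSv}. Since $A_{12} c_* = -b_*$ by \eqref{bstar}, the profile ODE \eqref{NS} reduces the right-hand side to $f_*(u_\pm)$, giving \eqref{exactfeq}.

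\textbf{Step 2: The residual $\cR_v$ at order $\eps^3$.} The key is to Taylor-expand $q$ around the equilibrium $(u,v_*(u))$, exploiting $q(u,v_*(u)) \equiv 0$, and then use the \emph{defining} identity \eqref{cstar} for $c_*$ together with the differentiated profile equation to produce a cancellation. Writing $w := \bar v_{NS} - v_*(\bar u_{NS}) = c_*(\bar u_{NS})\bar u_{NS}'$, one gets
\[
q(\bar u_{NS},\bar v_{NS}) = \partial_v q(\bar u_{NS}, v_*(\bar u_{NS}))\, w + O(|w|^2),
\]
so by \eqref{cstar},
\[
q(\bar u_{NS},\bar v_{NS}) = \big[A_{21} + A_{22} dv_*(\bar u_{NS}) - dv_*(\bar u_{NS})(A_{11} + A_{12} dv_*(\bar u_{NS}))\big]\bar u_{NS}' + O(|w|^2).
\]
On the other hand, expanding $\bar v_{NS}' = dv_*(\bar u_{NS})\bar u_{NS}' + w'$ yields
\[
A_{21}\bar u_{NS}' + A_{22}\bar v_{NS}' = \big(A_{21}+A_{22} dv_*(\bar u_{NS})\big)\bar u_{NS}' + A_{22} w'.
\]
Subtracting produces
\[
\cR_v = dv_*(\bar u_{NS})\,(A_{11} + A_{12} dv_*(\bar u_{NS}))\,\bar u_{NS}' + A_{22} w' + O(|w|^2).
\]
Now differentiating the Navier--Stokes profile equation \eqref{NS} gives $(A_{11} + A_{12}dv_*(\bar u_{NS}))\bar u_{NS}' = dh_*(\bar u_{NS})\bar u_{NS}' = (b_*(\bar u_{NS})\bar u_{NS}')' = -A_{12} w'$, and hence
\[
\cR_v = \big(A_{22} - dv_*(\bar u_{NS}) A_{12}\big) w' + O(|w|^2).
\]

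\textbf{Step 3: Size of the residual and higher derivatives.} From \eqref{NSbds}, $\bar u_{NS}'$ is $O(\eps^2 e^{-\theta\eps|x|})$ and each further $x$-derivative gains a factor of $\eps$; hence $w = c_*(\bar u_{NS})\bar u_{NS}'$ is $O(\eps^2 e^{-\theta\eps|x|})$, $w' = O(\eps^3 e^{-\theta\eps|x|})$, and $|w|^2 = O(\eps^4 e^{-2\theta\eps|x|})$. Therefore $|\cR_v| \lesssim \eps^3 e^{-\theta \eps|x|}$. The $k$-th derivative bound \eqref{eq:resbds} follows by applying $\partial_x^k$ to the identity for $\cR_v$ and repeatedly using \eqref{NSbds}; each $\partial_x$ costs a factor of $\eps$, so $|\partial_x^k \cR_v|\lesssim \eps^{k+3} e^{-\theta\eps|x|}$ after possibly shrinking $\theta$.

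The only nontrivial point is the algebraic cancellation in Step 2, which is dictated precisely by the definition of $c_*$: without it one would only see an $O(\eps^2)$ residual from $A_{21}\bar u_{NS}'$. This cancellation is, of course, the reason $\bar U_{NS}$ is considered an approximate solution at the Chapman--Enskog level. The exponential factor $\theta\eps$ and constants $C_k$ are inherited from \eqref{NSbds} uniformly in $(u_-,u_+)\in\cS_+$.
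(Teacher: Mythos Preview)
Your proof is correct and follows essentially the same approach as the paper: verify \eqref{exactfeq} by unwinding the definitions of $f_*$, $b_*$, and $\bar v_{NS}$, then Taylor-expand $q$ about the equilibrium and use the defining identity for $c_*$ together with the differentiated profile equation to produce the crucial cancellation. Your version is in fact slightly more explicit than the paper's, arriving at the concrete form $\cR_v = (A_{22} - dv_*(\bar u_{NS})A_{12})w' + O(|w|^2)$, whereas the paper stops at the estimate $\cR_v = O(|\bar u_{NS}'|^2) + O(|\bar u_{NS}''|)$; both yield the same bounds.
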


\begin{proof}
Given the  choice of $\bar v_{NS}$, the first equation is a rewriting of the profile equation  
\eqref{NS}.  

Next, note that 
$$
 \bar v_{NS}  - v_* (\bar u_{NS}) = O  (| \bar u'_{NS} | ) , 
 \quad  \big( \bar v_{NS}  - v_* (\bar u_{NS})\big)'   = O  (| \bar u''_{NS} | ) + O  (| \bar u'_{NS} |^2 ), 
$$
where here  $O ( \cdot )$ denote  smooth functions of $\bar u_{NS}$ and its derivatives, 
which vanish as indicated. With similar notations, 
the Taylor expansion of $q$ and the definition of $\bar v_{NS}$ show that 
$$
\begin{aligned}
\cR_v       = &  O(| \bar v_{NS} -v_*(\bar u_{NS})|^2) + O(|(\bar v_{NS} -v_*(\bar u_{NS}))'|)
\\
& +   d v_* (\bar u_{NS}   \big(A_{11} + A_{12}  dv_* (\bar u_{NS}) \big )  \bar  u'_{NS}  . 
\end{aligned}
$$
Moreover, 
$$
\begin{aligned}
\big(A_{11} + A_{12}  d v_* (\bar u_{NS}) \big)  \bar  u'_{NS}  &= \big(  f_* (\bar u_{NS}) \big)' 
=   \big( b_* (\bar u_{NS}) \bar u'_{NS}) \big)'   
   \\
   & = O(|\bar u_{NS}'|^2) +    O(|\bar u_{NS}''|). 
\end{aligned}
$$
 This implies that  
 $$
\cR_v =   O(|\bar u_{NS}'|^2) +    O(|\bar u_{NS}''|).  
$$
  satisfies the estimates stated in \eqref{eq:resbds}. 
\end{proof}

\begin{rem}\label{correctorrmk}
\textup{
One may check that 
if we did not include the correction from equilibrium on the righthand
side of \eqref{NSv}, taking instead the simpler prescription
$\bar v_{NS} =v_*(\bar u_{NS})$ as in \cite{LY}, then 
the residual error that would result in \eqref{exactfeq} would be
too large for our later iteration scheme to close.
This is a crucial difference between our analysis and the analysis
of \cite{LY}.
The prescription $\bar U_{NS} $ corresponds to the 
first-order Chapman--Enskog approximation in both variables,
$u$ and $v$ together.
}
\end{rem}

 %%%%%%%%%%%%%%%%%%%%%%%%%%%%%
 %%%%%%%%%%%%%%%%%%%%%%%%%%%%%
 %
 %  SECTION  4 

\subsection{Basic $L^2$ result}\label{results}

We are now ready to state the basic $L^2$ version of our main result.
Define a base state $U_0=(u_0,v_*(u_0))$ and a
neighborhood $\cU=\cU_*\times \cV$.   

\begin{prop}\label{main}
Let Assumptions (SS), (GC), and  \ref{goodred} hold on the
neighborhood $\cU$ of $U_0$, with $Q\in C^{\infty}$. 
Then, there are $\eps_0 > 0$  and 
$\delta > 0$ such that for $(u_-, u_+) \in \cS+$ with  amplitude $\eps:=|u_+-u_-| \le \eps_0$,   the standing-wave equation 
\eqref{relax} has a solution   
$\bar U$ in $\cU$, 
 with associated Lax-type 
equilibrium shock $(u_-,u_+)$, satisfying for all $k  $: 
\begin{equation}\label{finalbdsold}
\begin{aligned}
\big|\partial_x^k (\bar U- \bar U_{NS})\big|
&\le C_k \eps^{k+2}e^{-\delta  \eps|x|},\\
|\partial_x^k (\bar u-u_\pm)|&\le C_k \eps^{k+1}e^{-\delta \eps|x|},
\quad x\gtrless 0,\\
\big|\partial_x^k (\bar v-v_*(\bar u)\big|
&\le C_k \eps^{k+2}e^{-\delta  \eps|x|},\\
\end{aligned}
\end{equation}  
where $\bar U_{NS}=(\bar u_{NS}, \bar v_{NS})$ is the 
approximating Chapman--Enskog profile defined in \eqref{NSU}, and
$C_k$ is independent of  $\eps$. 
Moreover, up to translation, this solution is unique
within a ball of radius $c\eps$ about $\bar U_{NS}$ in norm 
$\|\cdot\|_{L^2}+\eps^{-1}\|\D_x \cdot\|_{L^2}
+ \eps^{-2}\|\D_x^2 \cdot\|_{L^2} $, for $c>0$ sufficiently small.
(For comparison, $\bar U_{NS}-U_\pm$ is order $\eps^{1/2}$ in this norm,
by \eqref{finalbds}(ii)--(iii).)
\end{prop}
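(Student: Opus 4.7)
The plan is to write $\bar U = \bar U_{NS} + W$ and solve for the perturbation $W$ by a contraction mapping in an appropriate weighted Sobolev space. Substituting into \eqref{relax} and using Corollary \ref{redbds}, the equation for $W$ becomes
$$AW' - L_{\bar U_{NS}} W = \cR + Q(W,W), \qquad \cR = (0,-\cR_v),$$
with $\cR_v$ as in Corollary \ref{redbds}. The key observation, essential for applying Lemma \ref{basicest}, is that both $\cR$ and $Q(W,W)$ take values in $\VV$ (the former by construction and the latter by Assumption \ref{ass1}(iii)), so the $\PP_\UU$-projection of the right-hand side vanishes identically and the source term trivially has the special form \eqref{specialF} with $h=f=0$.

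Given this, Lemma \ref{basicest} applied with $a = \bar U_{NS}$ (which satisfies the smallness hypothesis \eqref{small3} by Proposition \ref{NSprofbds}) yields
$$\|W'\|_{L^2} + \|\PP_\VV W\|_{L^2} \lesssim \|\cR_v\|_{H^2} + \|Q(W,W)\|_{H^2} + \eps\|\PP_\UU W\|_{L^2},$$
where $\|\cR_v\|_{H^2} \lesssim \eps^{5/2}$ from Corollary \ref{redbds}. This microscopic estimate must be complemented by a macroscopic estimate on $\PP_\UU W$. For that, integrate the $u$-component of \eqref{relax} to obtain the algebraic constraint $A_{11}u + A_{12}v = f_*(u_\pm)$, subtract the analogous identity \eqref{exactfeq} for $\bar U_{NS}$, and combine the result with the dynamical $v$-equation. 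The resulting reduced system is, to leading order, the linearization of the Navier--Stokes profile equation \eqref{NS} about $\bar u_{NS}$; under Assumptions \ref{goodred} and \ref{profass} (Lax shock structure with genuine nonlinearity), this finite-dimensional ODE may be analyzed by the sharp center-manifold techniques of \cite{MaZ5}, giving control of $\|\PP_\UU W\|_{L^2}$ in terms of $\|\PP_\VV W\|_{L^2}$ and residuals.

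Defining the Banach space $X$ by $\|W\|_X := \|W\|_{L^2} + \eps^{-1}\|W'\|_{L^2} + \eps^{-2}\|W''\|_{L^2}$, the combined microscopic and macroscopic estimates close to yield, on the ball $\{\|W\|_X \le c\eps\}$, a bound of the form $\|W\|_X \lesssim \eps^{3/2} + \eps^{-1/2}\|W\|_X^2$, which is consistent for $\eps$ small. The map $W \mapsto \cT(W)$ defined by solving the linearization of \eqref{relax} at $\bar U_{NS}$ with $Q(W,W)$ frozen as a source is then shown to be a contraction in $X$ on a ball of radius $O(\eps^{3/2})$ by applying the same estimates to differences $\cT(W_1)-\cT(W_2)$, exploiting bilinearity of $Q$. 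The pointwise exponentially weighted bounds \eqref{finalbdsold} are recovered from the $X$-solution by bootstrap: rewriting the equation as $AW' = L_{\bar U_{NS}} W + \cR + Q(W,W)$, treating the noninvertibility of $A$ on $\UU$ via the macro/micro split (so that $\PP_\UU W$ is controlled by the algebraic constraint and $\PP_\VV W$ by inverting $A$ on $\VV$), and iterating with the sharp exponential decay rate $\eps$ of the coefficients.

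The main obstacle, relative to the finite-dimensional analysis of \cite{MeZ1,MTZ}, is avoiding unbounded commutators when manipulating $L_a$; this is precisely why Lemma \ref{basicest} is phrased without exactly pre-symmetrizing $L_a$, instead using the almost-block structure \eqref{almostblock}--\eqref{almostblock2} derived from the fact that $L_a - \underline L$ is small. A second technical point, already flagged in Remark \ref{correctorrmk}, is that the refined Chapman--Enskog approximation \eqref{NSv} incorporating the corrector $c_*(\bar u_{NS})\bar u_{NS}'$ must be used; with the naive prescription $\bar v_{NS} = v_*(\bar u_{NS})$, the residual $\cR_v$ would be $O(\eps^2)$ rather than $O(\eps^3)$, and the contraction would fail to close. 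Uniqueness in the stated weighted norm follows from the same bilinear estimates applied to the difference of two candidate solutions.
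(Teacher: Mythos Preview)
Your overall strategy---contraction mapping about $\bar U_{NS}$, micro-estimate via Lemma~\ref{basicest}, macro-estimate via the linearized Navier--Stokes profile ODE---is the paper's strategy, and you correctly flag the two technical points (no exact presymmetrization of $L_a$; necessity of the corrector \eqref{NSv}). But two genuine gaps remain.

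\textbf{Solvability of the linearized problem and the phase condition.} You define $\cT$ by ``solving the linearization of \eqref{relax} at $\bar U_{NS}$ with $Q(W,W)$ frozen as a source,'' but the a~priori estimates you invoke give only uniqueness, not existence. The linearized operator is \emph{not} surjective on the differentiated system (translation invariance gives an approximate kernel $\bar U_{NS}'$), and even on the integrated system $\csL$ one must fix the indeterminacy by a phase condition $\ell_\eps\cdot u(0)=0$ before a right inverse $\csLd$ can be constructed. In the paper this is the content of Proposition~\ref{invprop}, whose proof occupies Sections~\ref{energy}--8: the estimates are established first, and existence is then obtained by a vanishing-viscosity regularization $\csLe$ combined with a Galerkin (finite-dimensional in $\VV$) approximation. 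Your sketch omits both the phase condition and the existence step; without them $\cT$ is not well-defined.

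\textbf{Recovery of pointwise exponential decay.} Your proposed bootstrap---``$\PP_\VV W$ by inverting $A$ on $\VV$''---does not work: $A$ is multiplication by $\xi_1$, whose spectrum is all of $\RR$, so $A$ is not invertible on any subspace containing functions supported near $\xi_1=0$. The paper does not bootstrap from unweighted to weighted spaces. Instead, the entire contraction argument is carried out from the outset in the exponentially weighted spaces $H^s_{\eps,\delta}$ (all $s\ge 2$), with $\csLd$ shown to act boundedly there; the pointwise bounds \eqref{finalbdsold} then follow by Sobolev embedding in $x$. The weighted estimates are obtained by conjugating by $e^{\eps\delta\langle x\rangle}$ and absorbing the $O(\eps\delta)$ commutator, not by any inversion of $A$.
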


\section{Outline of the proof}\label{outline}

We describe now the main steps in the proof of Proposition \ref{main},
exactly following the finite-dimensional analysis of \cite{MeZ1}.
 
\subsection{Nonlinear perturbation equations}
Defining the perturbation variable $U:= \bar U- \bar U_{NS}$,
and expanding about $\bar U_{NS}$,
we obtain from \eqref{intprof} the nonlinear perturbation equations
\begin{eqnarray}
  A_{11} u + A_{12} v  & = &0 
\\
  A_{21} u' + A_{22} v' -    d q (\bar U_{NS}) U & = & - \cR_v + N( U) 
\end{eqnarray} 
where the remainder $N(U) $ is  a smooth function of $U_{NS} $ and $U$, 
vanishing at second order at  $U =0$: 
\begin{equation}\label{Nbds}
N(U)= \cN(\bar U_{NS}, U)  = O(|U|^2). 
\end{equation}
We push the reduction a little further, using that 
\begin{equation}
\label{defM}
M := 
dq(\bar u_{NS}, \bar v_{NS})-
dq(\bar u_{NS}, v_*(u_{NS})) =
O(|\bar v_{NS}-v_*(\bar u_{NS})|). 
\end{equation}
 Therefore the equation reads 
 \begin{equation}
 \label{intpert}
 \begin{aligned}
\cL_*^\eps U:=&
\begin{pmatrix}0 & 0 \\ A_{21} & A_{22} 
\end{pmatrix}U'
+
\begin{pmatrix}A_{11} & A_{12} \\
- Q_{21}  & - Q_{22} \end{pmatrix}U
\\ 
= &
\begin{pmatrix} 0  \\ - \cR_v + MU + N(U)\end{pmatrix}
\end{aligned}
\end{equation}
where 
\begin{equation}
Q_{21} = \D_u q (\bar u_{NS}, v_* (\bar u_{NS})), 
\quad 
Q_{22} = \D_v q (\bar u_{NS}, v_* (\bar u_{NS})). 
\end{equation}

Differentiating the first line, it implies that 
\begin{equation}\label{pert} 
L_*^\eps U:=
AU'-dQ(\bar u_{NS},v_*(\bar u_{NS}))U=
\begin{pmatrix}   0  \\ - \cR_v + MU + N(U)\end{pmatrix}. 
\end{equation}
  
The linearized operator $A\partial_x - dQ(\bar U)$
about an exact solution $\bar U$ of the profile equations
has kernel $\bar U'$, by translation invariance, so is not invertible.
Thus, the linear operators $L_*^\eps $ and $\csL$ are not 
expected to be invertible,
and we shall see later that they are not.  
Nonetheless, one can check that $\csL$ is surjective in Sobolev spaces and    define a right inverse
$\csL \csLd\equiv I$, or solution operator
$(\cL_*^\eps)^\dagger$ of the equation 
\begin{equation}
\label{neweq}
\cL_*^\eps U=F:=  \begin{pmatrix}f\\g\end{pmatrix}, 
\end{equation}  
as recorded by   Proposition~\ref{invprop} below. 
Note that $\sL$ is not surjective   because   the first equation requires 
a zero mass condition on the source term. This is why we solve 
the integrated equation \eqref{intpert} and not \eqref{pert}. 

To  define the partial inverse $\csLd$, we  specify  one solution of 
\eqref{neweq} by adding the co-dimension one  internal 
condition: 
\begin{equation}\label{phasecond}
\ell _\eps \cdot u(0)  =0 
\end{equation}
where $\ell_\eps$ is a certain unit vector to be specified below. 

\begin{rem}\label{ellchoice}
\textup{ There is a large flexibility in the choice of $\ell_\eps$.
Conditions like \eqref{phasecond}  are  known to fix the indeterminacy in the 
resolution of the linearized profile equation  from \eqref{NS}
and  it remains well adapted in the present context,  see section~\ref{linCEestimates} below. 
A possible choice,  would be to choose $\ell_\eps$ independent of 
$\eps$ and parallel to the left  eigenvector of $ dh_* (u_0)$ for the eigenvalue $0$
(see  Assumption~\ref{profass}),
which, together with the asymptotics of Proposition \ref{NSprofbds}, gives
\be\label{normalization}
\ell_\eps \cdot \bar U_{NS}'(0)\sim \eps^2 \ne 0.
\ee
    }
\end{rem}

%=====================================================

\subsection{Fixed-point iteration scheme}

The coefficients  and the error term $\cR_v$ are smooth functions of 
$\bar u_{NS}$ and its derivative, thus behave like smooth functions of 
$ \eps x$. Thus, it is natural to solve the equations in spaces which reflect 
this scaling. We do not introduce explicitly the change of variables
$\tilde x = \eps x$, but introduce norms which correspond to the usual $H^s$ norms 
in the $\tilde x $ variable : 
\begin{equation}
\label{defnorm}
\|f \|_{H^s_\eps} =  
\eps^{\mez}  \|f \|_{L^2}+
\eps^{-\mez }\|\partial_x f\|_{L^2}+ \dots + 
\eps^{\mez-s}\|\partial_x^s f\|_{L^2}.
\end{equation}
We also introduce weighted spaces and norms, which encounter for the exponential 
decay of the source and solution: introduce the notations.  
\begin{equation}
\label{modx}
<x>:= (x^2+1)^{1/2}
\end{equation}
For  $\delta \ge 0$ (sufficiently small), we denote by $H^s_{\eps, \delta}$ the space of 
functions $f$ such that   $ e^{\delta  \eps <x>} f \in H^s$ equipped with the norm
\begin{equation}
\label{defwnorm}
\|f \|_{H^s_{\eps, \delta} } =   \eps^{\mez} \sum_{k \le s} \eps^{-k}  \|e^{\delta \eps <x>} \D_x^k f \|_{L^2}.
\end{equation}
 Note that for $\delta \le 1$, this norm is equivalent, with constants independent of $\eps$ and $\delta$, 
 to the norm
 $$
\|e^{\delta \eps <x>}  f \|_{H^s_\eps} . 
 $$

\begin{prop}\label{invprop}
Under the assumptions of Theorem \ref{main},  
there are  constants $C$,  $\eps_0 > 0 $ and $\delta_0 > 0$   
and  for all  $\eps \in ]0, \eps_0]$, there is a unit vector $\ell_\eps$ such that 
for $\eps \in ]0, \eps_0]$, $\delta \in [0, \delta_0]$, 
$f \in H^{3}_{\eps, \delta} $, $g \in H^{2}_{\eps, \delta} $ 
the operator equations \eqref{neweq} \eqref{phasecond} has a unique 
solution 
$ U \in H^{2}_{\eps, \delta} $, denoted by $ U = \csLd F$, which satisfies     
  \begin{equation}\label{invbdH2}
\big\|\csLd  F \big\|_{H^2_{\eps, \delta} }\le 
C\eps^{-1}\big( \big\| f \|_{H^{3}_{\eps, \delta} }
+ \big\|g  \big\|_{H^2_{\eps, \delta} }\big).
\end{equation}
 
 Moreover, for  $s \ge 3$, there is a constant $C_s$ such that for  
 $\eps \in ]0, \eps_0]$ 
and 
$f \in H^{s+1}_{\eps, \delta} $, $g \in H^{s}_{\eps, \delta} $ 
the  
solution 
$ U = \csLd F \in H^{s}_{\eps, \delta} $ and 
 \begin{equation}\label{invbdHs}
\big\|\csLd  F \big\|_{H^s_{\eps, \delta} }\le 
C\eps^{-1}\big( \big\| f \|_{H^{s+1}_{\eps, \delta} }
+ \big\|g  \big\|_{H^s_{\eps, \delta} }\big) + C_s  \big\|\csLd  F \big\|_{H^{s-1}_{\eps, \delta} } . 
\end{equation}

\end{prop}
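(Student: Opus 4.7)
The plan is to combine the microscopic energy estimate of Lemma~\ref{basicest} with a linearized Chapman--Enskog reduction to an ODE for the macroscopic part $u$, invert the resulting reduced operator using classical linear profile theory (this is where the phase condition \eqref{phasecond} enters), and then bootstrap back to obtain the sharp $\eps^{-1}$ estimate \eqref{invbdH2}.

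First I would record that the first equation $A_{11}u + A_{12}v = f$ together with the definition of the reduced Chapman--Enskog operator allows us to decompose $v = v_*'(\bar u_{NS})u + w$ with $w$ the genuine ``microscopic'' remainder. Using $-Q_{22}\approx -\underline L|_\VV$, which by Lemma~\ref{coerc1} is coercive on $\VV$, the second equation expresses $w$ in terms of $u'$ up to source terms and commutators of size $\eps$ (coming from \eqref{small3} applied to $a=\bar U_{NS}$). Substituting into the derivative of the first equation yields a perturbed linearized Navier--Stokes operator acting on $u$:
\begin{equation*}
\bigl(dh_*(\bar u_{NS})u\bigr)' - \bigl(b_*(\bar u_{NS})u'\bigr)' = \tilde F(f,g,U),
\end{equation*}
where $\tilde F$ is controlled by the right-hand side of \eqref{invbdH2} plus $\eps$ times $\|U\|$.

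Second, I would invert this reduced operator. Under Assumptions~\ref{goodred} and \ref{profass}, and on the scale $\tilde x = \eps x$ built into the norms \eqref{defnorm}, the linearization about the Lax-type profile $\bar u_{NS}$ of Proposition~\ref{NSprofbds} is a standard operator whose only kernel is translation, $\bar u_{NS}'$, as in \cite{MaZ5,MeZ1}. The condition \eqref{phasecond} with $\ell_\eps$ chosen as in Remark~\ref{ellchoice} (so that $\ell_\eps\cdot\bar u_{NS}'(0)\sim\eps^2\neq 0$) transversally cuts this kernel, yielding a bounded right inverse on exponentially weighted spaces $H^s_{\eps,\delta}$ with the scaling-induced prefactor $\eps^{-1}$, provided $\delta$ is small enough that the shift in the essential spectrum by the weight is absorbed by genuine nonlinearity.

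Third, with the macroscopic estimate for $u$ in hand, I would return to Lemma~\ref{basicest} applied to $e^{\delta\eps\langle x\rangle}U$: the conjugation produces a commutator $[\D_x,e^{\delta\eps\langle x\rangle}]$ of order $\delta\eps$, harmlessly absorbed into the coercivity $\gamma\|U'\|^2+\lambda\delta\|\PP_\VV U\|^2$. This yields $\|\PP_\VV U\|_{L^2}+\|\D_x U\|_{L^2}$ in the weighted space, and combined with the macroscopic bound on $\PP_\UU U$ gives \eqref{invbdH2}. The higher regularity \eqref{invbdHs} is obtained by differentiating the system $k$ times, noting that the coefficients are smooth functions of $\eps x$ so each derivative produces either a term of the same top order (handled by the same estimate, yielding the first term on the right of \eqref{invbdHs}) or a commutator of lower order (absorbed into $C_s\|U\|_{H^{s-1}_{\eps,\delta}}$).

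The main obstacle is step two: producing an $\eps$-uniform inverse of the reduced linearized Navier--Stokes operator on the weighted space, with the correct $\eps^{-1}$ scaling, while simultaneously closing the loop against the $O(\eps)$ error incurred in reducing to the macroscopic equation. The closure is the reason the bound \eqref{invbdH2} is not merely $O(1)$ but sharp $\eps^{-1}$: the reduced operator acts on functions of wavelength $\sim \eps^{-1}$, so in the scaled norm it loses one power of $\eps$, and the coupling to the microscopic part has to be controlled uniformly. Once this is in place, uniqueness in the stated norm follows from the same estimate applied to the difference of two solutions.
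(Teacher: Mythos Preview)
Your outline correctly captures the a priori estimate half of the argument, and indeed follows the paper's strategy closely: the micro estimate (Lemma~\ref{basicest}, with weights via conjugation), the Chapman--Enskog reduction to the linearized profile ODE $\bar b_* u' - \bar{dh}_* u = \cdots$, inversion of that ODE with the phase condition and loss $\eps^{-1}$, and then closing the loop. One technical point you underplay is that the source in the reduced macroscopic equation contains a term $\|\tilde v'\|$, not just $\|\tilde v\|$; controlling this requires the micro estimate one derivative higher (this is why $f\in H^3_{\eps,\delta}$, not $H^2_{\eps,\delta}$, is needed for \eqref{invbdH2}), but your scheme accommodates this.

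The genuine gap is existence. Everything you have written produces an a priori bound on a solution \emph{assumed} to lie in $H^2_{\eps,\delta}$; it does not construct one. The operator $\csL$ is not elliptic, $A$ is not invertible, and $\HH$ is infinite-dimensional, so you cannot simply invoke Fredholm theory or ODE existence. The paper devotes an entire section to this: one introduces a viscous regularization $\csLe U = \csL U - \eta(u',v'')^T$, verifies the same uniform estimates hold for $\csLe$, and then must show the regularized problem is solvable. Even this is nontrivial in infinite dimensions: the paper recasts it as a first-order transmission problem $\cU' - \mA\cU = \cF$, proves hyperbolicity of the limiting matrices $\mA_\pm$ (no spectrum in a strip $|\Re z|\le \eps\delta_0$), counts stable/unstable dimensions via a homotopy in $\eta$, and---crucially---passes through a sequence of finite-dimensional Galerkin approximations $\HH_r = \UU\oplus\VV_r$ on which the Kawashima structure is shown to survive uniformly in $r$. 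Only then does one take $\eta\to 0$ along a weakly convergent subsequence, with uniqueness (which you do have) identifying the limit. Without this construction your argument establishes injectivity and the bound, but not surjectivity of the map $F\mapsto U$.
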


The proof of this proposition  comprises most of the work of the paper.
Once it is established, existence follows by a straightforward
application of the Contraction-Mapping Theorem.
Defining 
\begin{equation}\label{Tdef}
\cT:=\csLd
\begin{pmatrix} 0 \\ - \cR_v + MU + N(U))\end{pmatrix},
\end{equation}
we reduce \eqref{pert} to the fixed-point equation
\begin{equation}\label{fixedeq}
\cT   U:=    U.
\end{equation}

%------------------------------------------------------------------------------------------

\subsection{Proof of the basic result}\label{pf}

\begin{proof}[Proof of Theorem \ref{main}]

The profile $\bar u_{NS}$ exists if $\eps$ is small enough. 
The estimates \eqref{NSbds}  imply that 
\begin{equation}
\label{NSbds2}
\| \bar u_{NS} - u_\pm \|_{H^s_{\eps, \delta}}  \le  C_s  \eps 
\end{equation}
with $C_s$ independent of $\eps$ and $\delta$, provided 
that  $\delta \le \theta / 2$. 
Similarly, \eqref{eq:resbds} implies that
\begin{equation}\label{L2resbds}
\| \cR_v\|_{H^s_{\eps, \delta}}\le C_s  \eps^{  3}, 
\end{equation}
and \eqref{defM} implies that 
\begin{equation}\label{Mbds}
\|  M  \|_{H^s_{\eps, \delta}}\le C_s  \eps^{  2}.  
\end{equation}

Moreover, with the choice of norms \eqref{defnorm}, the Sobolev inequality  reads
\begin{equation}
\label{sobemb}
\|  u \|_{L^\infty }  \le  C   \| u \|_{H^1_{\eps}} \le C   \| u \|_{H^1_{\eps, \delta}}
\end{equation}
with $C $ independent of $\eps$. 
Moreover,  for smooth functions $\Phi$, there are nonlinear estimates 
\begin{equation}
\label{nlsest}
\| \Phi (u)   \|_{H^s_{\eps}} \le     C\big(  \|  u \|_{L^\infty } \big)    \ \| u \|_{H^s_{\eps}} . 
\end{equation}
which also extend to weighted spaces,  for $\delta \le 1$: 
\begin{equation}
\label{nlwsest}
\| \Phi (u)   \|_{H^s_{\eps, \delta}} \le     C\big(  \|  u \|_{L^\infty } \big)    \ \| u \|_{H^s_{\eps, \delta}} . 
\end{equation}

In particular, this implies that  for $s \ge 1$, $\delta \le \min\{ 1, \theta/ 2 \}$ and 
$\eps$ small  enough:  
\begin{equation}\label{Mbds2}
\begin{aligned}
\|  M  U   \|_{H^s_{\eps, \delta}}& \le C  \big(  \|  M  \|_{H^1_{\eps, \delta}}     \| U \|_{H^s_{\eps, \delta}}
+    \|  M  \|_{H^s_{\eps, \delta}}     \| U \|_{H^1_{\eps, \delta}}    \big) 
\\
& \le     \eps^{ 2 }   \big(  C     \| U \|_{H^s_{\eps,\delta}}   +  C_s     \| U \|_{H^1_{\eps,\delta}} \big)   
\end{aligned}
\end{equation}
where the first constant $C$ is independent of $s$. 
Similarly, 
 \begin{equation}\label{Nbds2}
\|  N (  U)    \|_{H^s_{\eps, \delta}}\le    C\big(  \|  U \|_{L^\infty } \big)      
 \| U \|_{H^1_{\eps, \delta}}  
 \| U \|_{H^s_{\eps, \delta}}  . 
\end{equation}

Combining these estimates,  we find that
\begin{equation*}
\begin{aligned}
\|\cT U\|_{H^s_{\eps, \delta} } \le   
 \eps^{-1} \big( C_s   \eps^{3 }  +   C  \eps^{2  }    \|U\|_{H^s_{\eps, \delta} } 
 + C_s  \eps^{2  }    \|U\|_{H^1_{\eps, \delta} }  
+  C  \|U\|_{H^1_{\eps, \delta} } \|U\|_{H^s_{\eps, \delta} }  \big), 
\end{aligned}
\end{equation*}
that is
\begin{equation}\label{Tbd}
\begin{aligned}
\|\cT U\|_{H^s_{\eps, \delta} } \le   
  C_s   \eps^{2 }  +   C  ( \eps^{  } +  \eps^{-1}\| U \|_{H^1_{\eps, \delta}} )   \|U\|_{H^s_{\eps, \delta} } 
 + C_s  \eps    \|U\|_{H^1_{\eps, \delta} }  
\end{aligned}
\end{equation}
provided that $\eps \le \eps_0$,  $\delta \le \min\{1, \theta / 2\}$ and 
$\| U \|_{L^\infty} \le 1$.

\medbreak

Consider first the case $s = 2$. 
Then,   $\cT$  maps the   ball 
$\cB_{\eps, \delta} = \{  \| U \|_{H^{2}_{\eps, \delta}}\le  \eps^{1+\frac{1}{2}}\} $ 
    to itself,  if  $\eps \le \eps_1 $ where  $\eps_1 > 0$ is small enough. 
  Similarly, 
  \begin{equation}\label{dTbd}
\begin{aligned}
\|\cT U -\cT V\|_{H^2_{\eps, \delta}}   \le 
C\eps^{-1} \big(\eps^{2} +  \|U\|_{H^2_\eps}  +  \|V\|_{H^2_\eps}\big)  \|U-V\|_{H^2_{\eps, \delta}},\\
\end{aligned}
\end{equation}
provided that  $\| U \|_{L^\infty} \le 1$ and $\| V \|_{L^\infty} \le 1$, 
from which we readily find   
that, for $\eps>0$ sufficiently small,
$\cT$ is contractive on   $\cB_{\eps, \delta}$, whence, by the Contraction-Mapping Theorem,
there exists a unique solution $  U^\eps$ of  \eqref{fixedeq} 
in $\cB_{\eps, \delta}$ for $\eps$ sufficiently small.

Moreover, from the contraction property 
$$
\|\bar U^\eps-\cT(0)\|_{H^2_\eps}= 
\|\cT(\bar U^\eps)-\cT(0)\|_{H^2_\eps}\le
c \|\bar U^\eps \|_{H^2_\eps},
$$
with $ c <1$, we obtain as usual that
$\|  U^{\eps, \delta} \|_{H^2_{\eps, \delta} }\le C\|\cT(0)\|_{H^2_{\eps, \delta}}$,
whence 
\begin{equation}
\|    U^\eps\|_{H^2_{\eps, \delta}  }\le C\eps^{2}. 
\end{equation}
by \eqref{Tbd}.
In particular,  $e^{\eps \delta  \la x \ra } U^\eps =  O( \eps^{2})$ in $H^2_{\eps}$ 
and by the Sobolev embedding 
\begin{equation}
\|  e^{\eps \delta  \la x \ra } U^\eps \|_{L^\infty}   =  O( \eps^{2}), 
\quad \|  e^{\eps \delta  \la x \ra } \D_x U^\eps \|_{L^\infty}   =  O( \eps^{3}). 
\end{equation}

\medbreak 

For $s \ge 3$, the estimates \eqref{Tbd} show that for $\eps \le \eps_1$ independent 
of $s$, the iterates $\cT^n (0)$ are bounded in $H^{s}_{\eps, \delta}$,   
and similarly that $\cT^n(0) - \cT (0) = O (\eps^2)$ in $H^{s}_{\eps, \delta}$, 
implying that the limit $U $ belongs to  $H^{s}_{\eps, \delta}$ with norm 
$O(\eps^2)$. 
Together with the Sobolev inequality \eqref{sobemb}, this implies the pointwise estimates  
\eqref{finalbds}.

Finally, the assertion about uniqueness follows by
uniqueness in $\cB_{c\eps, \delta}$ 
under the additional phase condition \eqref{phasecond} 
for the choice $\delta=0$
and $c>0$ sufficiently small
(noting by our argument that also $\cB_{c\eps, \delta}$ is mapped to 
itself for $\eps$ sufficiently small, for any $c>0$),
together with the observation that phase condition
\eqref{phasecond} may be achieved for any solution $\bar U=\bar U_{NS}+U$
with 
$$
%\|U\|_{L^\infty}\le c \eps^{2}<< \bar U_{NS}'(0)\sim \eps^2
\|U'\|_{L^\infty}\le c \eps^{2}<< \bar U_{NS}'(0)\sim \eps^2
$$
by translation in $x$, yielding
$\bar U_a(x):=\bar U(x+a)= \bar U_{NS}(x)+ U_a(x)$
with 
$$
U_a(x):= \bar U_{NS}(x+ a)-\bar U_{NS}(x)+ U(x+a) 
$$
so that
%$ U_a(0) \sim (a+o(1))\bar U_{NS}'(0) $
$\partial_a (\ell_\eps  \cdot u_a(0))=
\ell_\eps \cdot \big(\bar u_{NS}'(a) + u'(a)\big) \sim
\ell_\eps \cdot \bar u_{NS}'(0) $
and so (by the Implicit Function Theorem applied to $h(a):=\eps^{-2}
\big(\ell_\eps\cdot u_a)$, together with $\ell_\eps\cdot u_0=o(\eps)$
and the assumed property that $\ell_a\cdot \bar u_{NS}'(0)\sim \eps^{2}$
coming from our choice of $\ell_\eps$; see \eqref{normalization},
Remark \ref{ellchoice})
the inner product $\ell_a\cdot \bar u_{NS}'(0)$
may be set to zero by appropriate choice of $a=o(\eps^{-1})$ leaving
$U_a$ in the same $o(\eps)$ neighborhood, by the computation
$U_a-U_0\sim \partial_a U \cdot a\sim o(\eps^{-1})\eps^2$.
\end{proof}

It remains to prove existence of the linearized solution
operator and the linearized bounds \eqref{invbdHs}, which
tasks will be the work of most of the rest of the paper.
We concentrate first on estimates, and prove the existence next, using 
a viscosity method 
combined with (the single new step in treating the infinite-dimensional
case) discretization in velocity.

%%%%%%%%%%%%%%%%%%%%%%%%%%%%%%%%%%%
%%%%%%%%%%%%%%%%%%%%%%%%%%%%%%%%%%%
%
%  SECTION 6

\section{Internal and high frequency estimates}
 \label{energy}

We begin by establishing a priori estimates on solutions
of the equation \eqref{neweq}
This will be done in two stages.
In the first stage, carried out in
this section, we establish energy estimates
showing that ``microscopic'', or ``internal'', variables consisting
of $v $ and derivatives of $(u, v)$ are controlled by  and small 
with respect to the ``macroscopic'', or ``fluid'' variable, $u$.
As discussed in Section \ref{basic}, this is the main new
aspect in the infinite-dimensional case.

In the second stage, carried out in Section \ref{linCEestimates}, we
estimate the macroscopic variable $u$ by Chapman--Enskog approximation
combined with finite-dimensional ODE techniques such as have been
used in the study of fluid-dynamical shocks
\cite{MaZ4, MaZ5, Z1, Z2, GMWZ}, exactly as in the finite-dimensional
analysis of \cite{MeZ1}.

\subsection{The basic $H^1$ estimate} 
  
  We consider the equation
  \begin{equation}
  \label{inteqs6}
  \cL_*^\eps U := \begin{pmatrix} A_{11} u + A_{12} v  
  \\
   A_{21} u' + A_{22} v'  -   dq (\bar u_{NS} , v_* (\bar u_{NS}))  U \end{pmatrix} = 
  \begin{pmatrix} f \\ g \end{pmatrix}  
  \end{equation}
   and its differentiated form: 
\begin{equation}\label{apriorieq}
AU'- dQ(\bar u_{NS}, v_*(\bar u_{NS}))U=
\begin{pmatrix} f'\\g \end{pmatrix}.
\end{equation}
The internal variables are $U' = (u', v')$ and $\tilde v$ where 
\begin{equation}
\label{tildev}
\tilde v:= 
v   +    p   u  , \qquad 
p =  \partial_v q^{-1}\partial_uq (\bar u_{NS}, v_*(\bar u_{NS})) =  
-  dv_* (\bar u_{NS})
\end{equation} 
%CHANGED, better wording- OK?-KZ:
%is the linearized version of $\bar v-v_*(\bar u)$.
is the linearization about $(\bar u_{NS},\bar v_{NS})$
of the key variable $v-v_*(u)$ arising in the Chapman--Enskog expansion
of Section \ref{CEapprox}.
%ENDCHANGED
Noting that $pu=0$ at the reference point $\underline U$ 
by Assumption \ref{refass}, we have the important fact that
\be\label{smallp}
\|pu\|_{\HH}=O(\eps) \|u\|_{\HH}
\ee
on the set of $U$ we consider ($\eps^2$ close to $\bar U_{NS}$, so
$\eps$ close to $\underline U$), so that $v$ and $\tilde v$ are
nearly equivalent.

\begin{prop}\label{energypropL2}
Under the assumptions of Theorem \ref{main}, for  there  are  constants 
$C$, $\eps_0 > 0$ and $\delta_0 > 0$ such that for  $0 < \eps \le \eps_0$ and 
$0 \le \delta \le \delta_0$, 
$f \in H^{2}_{\eps, \delta} $, $g \in H^{1}_{\eps, \delta} $ 
and      $U= (u,v)\in H^1_{\eps, \delta}$ of \eqref{inteqs6} satisfies
  \begin{equation}\label{invbd}
\big\| U'   \big\|_{L^2_{\eps, \delta} }   + \big\| \tilde v   \big\|_{L^2_{\eps, \delta} }  \le 
C    \big( \big\| (f, f', f'', g, g') \|_{L^2_{\eps, \delta} }
 + \eps  \big\| u  \big\|_{L^2_{\eps, \delta} }  \big).
\end{equation}

\end{prop}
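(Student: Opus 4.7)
The plan is to derive this estimate as a weighted $L^2$ version of the abstract energy inequality already proved in Lemma \ref{basicest}, combined with the simple algebraic replacement of $v$ by $\tilde v = v + pu$ using \eqref{smallp}. First, differentiating the first block of \eqref{inteqs6} in $x$ recasts the system in the form \eqref{apriorieq}, i.e.\ $A U' - L_a U = (f', g)$, with frozen coefficient $a(x) = (\bar u_{NS}(x), v_*(\bar u_{NS}(x)))$. By Proposition~\ref{NSprofbds}, $a$ satisfies the decay bound \eqref{small3} required for the basic estimate. The $\UU$-component of the source is $\PP_\UU(f', g) = \D_x f$, so Lemma~\ref{basicest} applies with the Lemma's $h$ set to zero and its $f$ equal to ours, delivering
\[
\|U'\|_{L^2} + \|\PP_\VV U\|_{L^2} \le C\bigl(\|f\|_{H^2} + \|g\|_{H^1} + \eps\,\|\PP_\UU U\|_{L^2}\bigr).
\]

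To obtain the weighted version, I would conjugate the symmetrizer \eqref{def615} by $e^{\delta \eps \la x\ra}$, equivalently testing with $e^{2\delta\eps\la x\ra} U$ in the arguments leading up to \eqref{sharph1eq}. The only new contributions come from the commutator $[\D_x, e^{\delta\eps\la x\ra}] = \delta \eps \frac{x}{\la x\ra}e^{\delta\eps\la x\ra}$, which is of size $O(\delta\eps)$ pointwise and thus produces extra quadratic terms of the form $O(\delta\eps)(\|U'\|_{L^2_{\eps,\delta}}^2 + \|\PP_\VV U\|_{L^2_{\eps,\delta}}^2) + O(\delta\eps)\|\PP_\UU U\|_{L^2_{\eps,\delta}}\cdot \|U'\|_{L^2_{\eps,\delta}}$. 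Since the coercivity constants $\gamma, \delta_1$ in Lemma~\ref{kawa} and Proposition~\ref{propcoercs} are independent of $\eps$ and $\delta$, choosing $\delta \le \delta_0$ sufficiently small absorbs these commutator errors into the LHS without deteriorating the constant $C$. Rewriting in the $L^2_{\eps,\delta}$ and $H^s_{\eps,\delta}$ norms via \eqref{defnorm}--\eqref{defwnorm}, the expression $\|f\|_{H^2}$ in the unweighted estimate becomes (up to the $\eps$-rescaling) the weighted norm of the triple $(f, f', f'')$, and $\|g\|_{H^1}$ becomes that of $(g, g')$, yielding
\[
\|U'\|_{L^2_{\eps,\delta}} + \|\PP_\VV U\|_{L^2_{\eps,\delta}}
\le C\bigl(\|(f,f',f'',g,g')\|_{L^2_{\eps,\delta}} + \eps\,\|\PP_\UU U\|_{L^2_{\eps,\delta}}\bigr).
\]

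To pass from $\PP_\VV U = v$ to $\tilde v = v + pu$, I invoke \eqref{smallp}: since $p$ vanishes at $\underline U$ and $\bar u_{NS}$ stays $O(\eps)$-close to $u_0$, we have $\|pu\|_{L^2_{\eps,\delta}} \le C\eps\,\|u\|_{L^2_{\eps,\delta}}$, so $\|\tilde v\|_{L^2_{\eps,\delta}}$ and $\|v\|_{L^2_{\eps,\delta}}$ differ by a quantity that can be absorbed into the $\eps\,\|u\|_{L^2_{\eps,\delta}}$ term on the right-hand side. Combined with $\|\PP_\UU U\|_{L^2_{\eps,\delta}} = \|u\|_{L^2_{\eps,\delta}}$, this gives precisely \eqref{invbd}. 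The one technical point requiring care will be the interplay between the exponential weight and the Kawashima multiplier $K$: because $K$ is only a bounded skew-symmetric operator on $\HH$, the term $\Re \D_x \circ K$ produces, after conjugation, an extra symmetric contribution involving $K$ and the weight factor $\delta\eps x/\la x\ra$ which is at worst $O(\delta\eps)$ in operator norm and so again absorbable for $\delta_0$ chosen small relative to the coercivity constants; the weight- and $\eps$-dependence therefore plays no essential role beyond this smallness.
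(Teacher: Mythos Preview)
Your proof is correct and follows essentially the same line as the paper: invoke Lemma~\ref{basicest} for the unweighted case (with the lemma's $h=0$), then use \eqref{smallp} to replace $\PP_\VV U=v$ by $\tilde v$. The only minor difference is in the passage to $\delta>0$: rather than conjugating the symmetrizer and tracking weight--commutator terms, the paper simply sets $U^w=e^{\eps\delta\langle x\rangle}U$, observes that $U^w$ satisfies \eqref{inteqs6} with $f^w=e^{\eps\delta\langle x\rangle}f$ and $g^w=e^{\eps\delta\langle x\rangle}g+\eps\delta\langle x\rangle'(A_{21}u^w+A_{22}v^w)$, and applies the $\delta=0$ estimate to $U^w$ as a black box---the extra $O(\eps\delta)$ term in $g^w$ is then absorbed exactly as you describe.
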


\begin{proof}
%[Proof of Proposition~\ref{energypropL2}]
For $\delta=0$, the result follows by Lemma \ref{basicest}
%CHANGED: added this
together with \eqref{smallp}.
%ENDCHANGED

For $\delta > 0$ small, consider 
$U ^w =  e^{ \eps \delta \la x \ra } U$. Then, $U^w$ satisfies 
 \begin{equation}
  \label{inteqs6w}
  \cL_*^\eps U^w  = 
  \begin{pmatrix} f^w \\ g^w    \end{pmatrix}  ,   
  \end{equation}
  with $f^w =   e^{ \eps \delta \la x \ra }  f $ and $g^w  =   e^{ \eps \delta \la x \ra }  g + \eps \delta   \la x\ra ' (A_{21} u^w + A_{22}  v^w ) $. 
  We note that, 
  $$
  \|  U'  \|_{L^2_{\eps, \delta}}  \le    \|  (U^w)'  \|_{L^2_{\eps}}  +   \eps \|   U^w  \|_{L^2_{\eps}} , 
, \quad 
    \|  \tilde v  \|_{L^2_{\eps, \delta}} \lesssim   \|  \tilde v^w  \|_{L^2_{\eps}} , 
  $$
  $$
    \|   f^w, (f^w)', (f^w)''   \|_{L^2_{\eps}} \lesssim   \|  (f, f', f'')  \|_{L^2_{\eps, \delta}} , 
$$
$$
\begin{aligned}
     \|   g^w, (g^w)'  \|_{L^2_{\eps}}   \lesssim   \|  (g, g') \|_{L^2_{\eps, \delta}}  + 
    \eps  \delta \| (U, U') \|_{L^2_{\eps, \delta}}.  
  \end{aligned}
  $$
   We use the estimate \eqref{invbd} with $\delta = 0$ for 
 $U^w$, and the Proposition follows provided that $\delta$ is small enough.
\end{proof}
%=============================================

\subsection{Higher order estimates} 

\begin{prop}\label{estHs}  There are constants $C$, $\eps_0 > 0$, $\delta_0 > 0$ and for 
all $k \ge 2$, there is  $C_k$, such that 
 $0< \eps \le \eps_0$, $\delta \le \delta_0$,  
 $U \in H^s_{\eps, \delta}$, 
  $f \in H^{s+1}_{\eps, \delta}$ and $g \in H^{s}_{\eps, \delta}$ 
  satisfying 
%CHANGED, right? (K):
\eqref{apriorieq} satisfies:
%ENDCHANGED
\begin{equation}\label{sharph2eq}
\begin{aligned}
  \| \D_x^k U' \|_{L^2_{\eps, \delta}} + 
&  \| \partial^k_x \tilde v \|_{L^2_{\eps, \delta}}  
  \le  C  
  \| \partial_x^k (f, f', f'', g, g')  \|_{L^2_{\eps, \delta} }     \\
  &  +    \eps^k  C_k \big( 
 \| U' \|_{H^{k-1}_{\eps, \delta} } + \eps   \| \tilde v \|_{H^{k-1}_{\eps, \delta} }  +
  \eps \| u \|_{L^2_{\eps, \delta}}\big)  
\end{aligned}
\end{equation}

\end{prop}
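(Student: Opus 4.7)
My plan is to proceed by induction on $k$, with the base case $k=0$ supplied directly by Proposition~\ref{energypropL2}. For the inductive step I would differentiate the system \eqref{inteqs6} $k$ times in $x$. Since $A_{11},A_{12},A_{21},A_{22}$ are constant operators, the only source of commutator terms is the $x$-dependent coefficient $dq(\bar u_{NS},v_*(\bar u_{NS}))$, and $V:=\partial_x^k U$ solves the same form of system,
\begin{equation*}
\cL_*^\eps V = \begin{pmatrix}\partial_x^k f\\ \partial_x^k g + \cR_k\end{pmatrix},
\qquad
\cR_k := -\sum_{j=1}^k\binom{k}{j}\,\bigl(\partial_x^j[dq(\bar u_{NS},v_*(\bar u_{NS}))]\bigr)\,\partial_x^{k-j}U.
\end{equation*}
Applying Proposition~\ref{energypropL2} to $V$ (after substituting $F\leftrightarrow(\partial_x^k f,\partial_x^k g+\cR_k)$, whose $\tilde v$-analogue is the combination $\partial_x^k v + p\,\partial_x^k u$) gives an $L^2_{\eps,\delta}$ bound on $\partial_x^k U'$ and on $\partial_x^k v + p\,\partial_x^k u$ in terms of the desired sources, the commutator $\cR_k$ and $\partial_x\cR_k$, and $\eps\|\partial_x^k u\|_{L^2_{\eps,\delta}}$.

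Two items then remain. (i) \emph{Commutator estimates.} By Proposition~\ref{NSprofbds}, $|\partial_x^j\bar u_{NS}|\le C_j\eps^{j+1}e^{-\theta\eps|x|}$ for $j\ge 1$, so chain- and product-rule bounds yield
\begin{equation*}
\bigl|\partial_x^j[dq(\bar u_{NS},v_*(\bar u_{NS}))]\bigr| + \bigl|\partial_x^j p\bigr| \;\lesssim\; \eps^{j+1}e^{-\theta\eps|x|} \qquad (j\ge 1),
\end{equation*}
where $p=-dv_*(\bar u_{NS})$ is the coefficient appearing in $\tilde v$. Taking $\delta_0<\theta/2$ so that $e^{\eps\delta\la x\ra}e^{-\theta\eps|x|}$ is uniformly bounded, each of the $k$ terms in $\cR_k$ is controlled in $L^2_{\eps,\delta}$ by $\eps^{j+1}\|\partial_x^{k-j}U\|_{L^2_{\eps,\delta}}$, and analogously for $\partial_x\cR_k$. (ii) \emph{Reconciling $\tilde v$.} The difference $\partial_x^k\tilde v - (\partial_x^k v + p\,\partial_x^k u) = \sum_{j=1}^k\binom{k}{j}(\partial_x^j p)(\partial_x^{k-j}u)$ obeys an identical bound. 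Converting to the $H^s_{\eps,\delta}$ scaling via $\|\partial_x^m U\|_{L^2}\sim \eps^{m-1/2}\|U\|_{H^m_\eps}$ and splitting $v=\tilde v - pu$ (using $\|pu\|\lesssim \eps\|u\|$ from \eqref{smallp}) packages the result into the form $\eps^k C_k\bigl(\|U'\|_{H^{k-1}_{\eps,\delta}}+\eps\|\tilde v\|_{H^{k-1}_{\eps,\delta}}+\eps\|u\|_{L^2_{\eps,\delta}}\bigr)$ claimed on the right of \eqref{sharph2eq}; the residual term $\eps\|\partial_x^k u\|_{L^2_{\eps,\delta}}$ is already of the correct form, belonging to $\eps^k\|U'\|_{H^{k-1}_{\eps,\delta}}$ at top order.

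The main technical obstacle is balancing scalings: one must exploit that every physical derivative falling on $\bar u_{NS}$ (or $v_*(\bar u_{NS})$, or $p$) produces a factor $\eps^{j+1}$ that precisely matches the $\eps^{-j}$ cost per derivative in the $H^s_\eps$ norm, while the \emph{extra} $\eps$ (coming from $\bar u_{NS}'=O(\eps^2)$ rather than $O(\eps)$, a consequence of slow variation in the scaled variable $\eps x$) is what ultimately reduces the estimate from one involving $\|U\|_{H^{k-1}}$ to one involving only $\|U'\|_{H^{k-1}}$ and $\eps\|\tilde v\|_{H^{k-1}}$. A secondary but delicate point is that the exponential weight $e^{\eps\delta\la x\ra}$, when differentiated, contributes $O(\eps\delta)$, which must be absorbed into the left-hand side by taking $\delta_0$ sufficiently small relative to $\theta$; this is exactly the argument used in Proposition~\ref{energypropL2} to pass from $\delta=0$ to small $\delta>0$, and it applies verbatim here.
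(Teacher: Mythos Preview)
Your proposal is correct and follows essentially the same approach as the paper: differentiate \eqref{inteqs6} $k$ times, apply the base estimate of Proposition~\ref{energypropL2} to $\partial_x^k U$, bound the commutator using the profile decay $|\partial_x^j \bar u_{NS}|\lesssim \eps^{j+1}e^{-\theta\eps|x|}$, and reconcile $\partial_x^k\tilde v$ with $\partial_x^k v + p\,\partial_x^k u$.

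The one streamlining the paper exploits, which you miss, is the identity
\[
dq(\bar u_{NS},v_*(\bar u_{NS}))\,U \;=\; Q_{22}\,\tilde v,
\]
an immediate consequence of the definition $\tilde v = v + p u$ with $p = Q_{22}^{-1}Q_{21} = -dv_*(\bar u_{NS})$. This lets the paper write the commutator directly as
\[
r_k \;=\; -\sum_{l=0}^{k-1}\bigl(\partial_x^{k-l}Q_{22}\bigr)\,\partial_x^l\tilde v,
\]
involving only derivatives of $\tilde v$ rather than of the full $U$. Your version $\cR_k = -\sum_j\binom{k}{j}(\partial_x^j dq)\,\partial_x^{k-j}U$ is of course equivalent, but forces the extra step of splitting $U=(u,v)$ and then $v=\tilde v - pu$ to reach the claimed form of the right-hand side; the paper's route avoids this bookkeeping entirely. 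Both arguments yield the same estimate.
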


\begin{proof} 
Differentiating \eqref{inteqs6} $k$ times, yields
\begin{equation}\label{Dapriorieq}
A\partial_xU^{k} - dQ(\bar u_{NS}, v_*(\bar u_{NS}))\partial_x U^{k} =
\begin{pmatrix} \partial^k_x f'    \\ \partial^{k}_xg +   r_k  \end{pmatrix},
\end{equation}
where 
$$
r_k  =  - \sum _{l=0}^{k-1} \partial^{k-l}_x  Q_{22} \,  \D_{x}^{l} \tilde v. 
$$
Here we have used that $d q (\bar u_{NS}, v_* (\bar u_{NS}) U = Q_{22} \tilde v$. 
The   $H^1$ estimate 
  yields
$$
\begin{aligned}
\| \D_x^k U' \|_{L^2_{\eps, \delta}} + 
 \| \partial^k_x  v + p \D_{x}^k u  \|_{L^2_{\eps, \delta}}   \le  
  C  \big( & \| \partial_x^k (f, f', f'', g, g')   \|_{L^2_{\eps, \delta}}     \\
     + \eps  \| \partial_x^k  u \|_{L^2_{\eps, \delta}}  
 & +   \| \partial_x r_k \|_{L^2_{\eps, \delta}} 
+   \| r_k  \|_{L^2_{\eps, \delta}} \big) , 
\end{aligned}
$$
for $0 \le k \le s$, with  $r_0 = 0$ when $k = 0$. 
Since $Q$ is a function of $\bar u_{NS}$,   its   $k- l$-th derivative   is $O (\eps^{k - l+1})$
when $k-l > 0$. 
Therefore: 
$$
 \| \partial_x r_k \|_{L^2_{\eps, \delta}} 
+   \| r_k  \|_{L^2_{\eps, \delta}} \le C_k  \eps^{k}  \big(  \|\tilde  v' \|_{H^{k-1}_{\eps, \delta}} +
\eps   \| \tilde v \|_{L^2_{\eps, \delta} } \big). 
$$
Similarly,  for $k = 1$
$$
 \| \partial_x \tilde v_k \|_{L^2_{\eps, \delta}} 
 \le  \| \partial_x  v +  p \D_{x} u  \|_{L^2_{\eps, \delta}}  +  C 
 \eps^2  \| u  \|_{L^2_{\eps, \delta}}   
$$
and for $k \ge 2$: 
$$
 \| \partial^k_x \tilde v_k \|_{L^2_{\eps, \delta}} 
 \le  \| \partial^k_x  v + p \D_{x}^k u  \|_{L^2_{\eps, \delta}}  +  C_k 
(   \eps^k \| u'   \|_{H^{k-2}_{\eps, \delta}} +  \eps^{k+1}    \|\tilde  u  \|_{L^2 _{\eps, \delta}} \big). 
$$

\end{proof}

 %%%%%%%%%%%%%%%%%%%%%%%%%%%%%%%%%%%%%%
 %%%%%%%%%%%%%%%%%%%%%%%%%%%%%%%%%%%%%%

\section{Linearized Chapman--Enskog estimate} \label{linCEestimates}

\subsection{The approximate equations}

It remains only to estimate $\|u\|_{L^2_{\eps, \delta}}$      in order to close the estimates
and establish \eqref{invbd}.
To this end, we work with the first equation  in 
\eqref{inteqs6}
and  estimate it by comparison with the Chapman-Enskog 
approximation (see the computations Section~\ref{CEapprox}),
exactly as in the finite-dimensional case \cite{MeZ1}. 
  
From the second equation
$$
A_{21}u'+A_{22}v' -g=\partial_u q u+ \partial_v q v= \partial_vq \tilde v,
$$
where we use the notations $\tilde v$ of Proposition~
\ref{energypropL2}, 
we find 
\begin{equation}
\label{T2s7}
\tilde v=\partial_v q^{-1}
\Big((A_{21} + A_{22}\partial_v dv_* (\bar u_{NS}))u ' +A_{22} \tilde v  ' -g
\Big). 
\end{equation}
Introducing $\tilde v$ in the first equation, yields 
$$
(A_{11} + A_{12} d v_* (\bar u_{NS} ) ) u  + A_{12} \tilde v =  f,  
$$ 
 thus
$$
(A_{11} + A_{12} dv_* (\bar u_{NS}) ) u' =  f' - A_{12} \tilde v' -
d^2 v_* (\bar u_{NS}) (\bar u'_{NS}, u) . 
$$
 Therefore, \eqref{T2s7} can be modified to 
 \begin{equation}
 \label{T2bs7}
 \tilde v  =  c_* (\bar u_{NS}) u'   +   r 
\end{equation}
 with 
 $$
 \begin{aligned}
 r = d^{-1}_vq (\bar u_{NS}, &v_* (\bar u_{NS})) \Big(  A_{22}(\tilde v)' -g
 \\
& + dv_* (\bar u_{NS}) \big(  f' - A_{12} \tilde v' -
d^2 v_* (\bar u_{NS}) (\bar u'_{NS}, u)\big) \Big) . 
 \end{aligned}
 $$
This implies that $u$ satisfies the linearized profile equation
\begin{equation}\label{intpertu}
\begin{aligned}
\bar b_* u'- \bar {dh}_* u  =   A_{12} r  - f 
\end{aligned}
\end{equation}
where $\bar b_*=b_*(\bar u_{NS})$ 
and $\bar {dh}_{*} := dh_*(\bar u_{NS}) = A_{11} + A_{12} dv_* (\bar u_{NS})$.

%===============================================

\subsection{$L^2$ estimates and proof   of the main estimates}

The following estimate was established in \cite{MeZ1}
using standard finite-dimensional ODE techniques; for completeness,
we recall the proof here as well, in Section \ref{recall} below.

\begin{prop}[\cite{MeZ1}]\label{uprop}
The operator $\bar b_*\partial_x -\bar{dh}_*$ has
a right inverse $(b_*\partial_x -dh^*)^{\dagger}$ 
satisfying
\begin{equation}\label{rightinv}
\|(\bar b_*\partial_x -\bar {dh}_*)^{\dagger}h\|_{L^2_{\eps, \delta}} \le 
C\eps^{-1}\|h\|_{L^2_{\eps, \delta}},
\end{equation}
uniquely specified by the property that the solution 
$u = (b_*\partial_x -dh^*)^{\dagger} h$  satisfies 
\begin{equation}\label{phase}  
\ell_\eps  \cdot u(0) =0. 
\end{equation}
for a certain unit vector $\ell_\eps$. 
\end{prop}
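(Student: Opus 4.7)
The plan is to reduce to the finite-dimensional ODE result established in \cite{MeZ1,MaZ5}, whose argument we recall. First, rescale to the natural slow variable $y = \eps x$. In this variable $\bar u_{NS}$ becomes a smooth profile $\bar U(y)$ independent of $\eps$ (up to exponentially decaying error), the norms $L^2_{\eps,\delta}$ become standard weighted $L^2(e^{\delta\langle y\rangle}dy)$ norms, and the equation $\bar b_* u' - \bar{dh}_* u = h$ is transformed into a fixed singularly perturbed first-order system
\[
\eps B(y)\partial_y \tilde u - DH(y)\, \tilde u = \tilde h(y),
\qquad B(y) = b_*(\bar U(y)),\ DH(y)=dh_*(\bar U(y)),
\]
on $\RR$. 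The bound \eqref{rightinv} is equivalent to inverting this system with a uniform $\eps^{-1}$ loss, i.e.\ with loss exactly one power of the small parameter.

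The second step is the standard dichotomy/matching construction for small shocks of mixed hyperbolic--parabolic type. By Assumption \ref{goodred} and Assumption \ref{profass}, the operator $B\partial_y - \eps^{-1}DH$ admits uniform exponential dichotomies on $\pm y\ge 0$: this follows from a center-manifold analysis as in \cite{Pe,MaZ5}, using the Kawashima-type symmetrizer $s(u)$ (Assumption \ref{goodred}(i)), the constant left kernel of $b_*$ (Assumption \ref{goodred}(iii)), genuine coupling (Assumption \ref{goodred}(iv)), and the genuine nonlinearity of the simple near-zero eigenvalue of $dh_*$. The Lax condition (dimensions of unstable at $-\infty$ plus stable at $+\infty$ summing to $n+1$) then implies that the intersection of these two subspaces is exactly one-dimensional, spanned by the translation mode $\bar u_{NS}'$, and that the sum is all of $\RR^n$. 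Consequently the operator is surjective with one-dimensional kernel, and can be inverted uniformly in $\eps$ in the rescaled variable; the Jacobian $dx = \eps^{-1}dy$ produces the announced factor $\eps^{-1}$ in the original variable.

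The third step is to fix the indeterminacy by the phase condition \eqref{phase}. Choose $\ell_\eps$ so that it is uniformly transversal to the one-dimensional kernel at $x=0$; the explicit choice indicated in Remark \ref{ellchoice}, namely $\ell_\eps$ parallel to the left eigenvector of $dh_*(u_0)$ for the zero eigenvalue, ensures $\ell_\eps \cdot \bar u_{NS}'(0) \sim \eps^2 \ne 0$ by Proposition \ref{NSprofbds}. Uniform transversality of $\ell_\eps$ to the kernel (in rescaled variables) then yields a uniquely determined solution and the bound \eqref{rightinv}. The weighted version with $\delta>0$ is obtained by the standard conjugation $u \mapsto e^{\delta\eps\langle x\rangle}u$, picking up a zeroth-order perturbation of size $O(\delta\eps)$; for $\delta$ smaller than the dichotomy rates the dichotomy structure and the estimate persist.

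The main obstacle is verifying the uniform-in-$\eps$ exponential dichotomy for the singularly perturbed degenerate operator and the correct counting of dimensions across the shock. This is the content of the finite-dimensional Chapman--Enskog theory developed in \cite{MaZ5,MeZ1}: in the present paper it is used as a black box, since after the rescaling and the bounds of Section \ref{energy} the problem is entirely finite-dimensional, depending only on the reduced Navier--Stokes-type system \eqref{ce} for which Assumptions \ref{goodred}--\ref{profass} have been verified.
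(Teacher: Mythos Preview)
Your accounting for the factor $\eps^{-1}$ does not hold up. Under $y=\eps x$ the Jacobian contributes the same factor $\eps^{-1/2}$ to both $\|u\|_{L^2(dx)}$ and $\|h\|_{L^2(dx)}$, so it cancels and produces no net loss in \eqref{rightinv}; if the inverse really were uniformly bounded in the $y$ variable you would obtain the too-strong estimate $\|u\|_{L^2_{\eps,\delta}}\le C\|h\|_{L^2_{\eps,\delta}}$. The underlying error is the claim that the rescaled profile is independent of $\eps$: by \eqref{NSbds} its amplitude is still $|u_+-u_-|=\eps$, the endstates $u_\pm$ depend on $\eps$, and in particular the small eigenvalue $\alpha(u_\pm)$ of $dh_*$ remains $O(\eps)$ after rescaling. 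It is this $O(\eps)$ eigenvalue, not a Jacobian, that forces the $\eps^{-1}$ loss---in either variable. Your dichotomy on $\pm y\ge0$ therefore cannot be uniform in $\eps$ in the sense you need.

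The paper's proof avoids this by \emph{not} rescaling globally. In the original variable it uses Assumption~\ref{goodred}(iii)--(iv) to eliminate the kernel component of $b_*$ algebraically, reducing to a nondegenerate ODE $\tilde u_2'-m(x)\tilde u_2=O(|h|)$. The matrix $m$ is then block-diagonalized into a fast part with spectrum uniformly bounded away from $i\RR$ and a single scalar slow mode with eigenvalue $\eps\mu(x)$, $\mu_-\ge\alpha>0$, $\mu_+\le-\alpha$. The fast blocks are inverted in $x$ with uniform $L^2_{\eps,\delta}$ bounds; only the scalar slow equation $z_0'-\eps\mu z_0=h_0$ is rescaled to $\tilde x=\eps x$, where it becomes an $\eps$-independent index-one problem, the phase condition $z_0(0)=0$ is imposed, and the bound $\|z_0\|_{L^2}\le C\eps^{-1}\|h_0\|_{L^2}$ follows. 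Tracing back through the changes of variables produces $\ell_\eps$ and \eqref{rightinv}. This mode-by-mode treatment is what isolates the source of the $\eps^{-1}$ and handles the degeneracy of $b_*$, both of which your sketch leaves unresolved.
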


Taking this proposition for granted, we finish the proof of the main estimates in Proposition~\ref{invprop}.

 \begin{prop}
 \label{prop72}
There are  constants $C$,  $\eps_0 > 0 $ and  $\delta_0 > 0$   
such that 
for $\eps \in ]0, \eps_0]$, $\delta \in [0, \delta_0]$,       
$f \in H^{3}_{\eps, \delta} $, $g \in H^{2}_{\eps, \delta} $ and 
$U \in H^2_{\eps, \delta}$ satisfying  
  \eqref{neweq} and \eqref{phasecond}  
  \begin{equation}
  \label{invbdH2s7}
\big\| U \big\|_{H^2_{\eps, \delta} }\le 
C\eps^{-1}\big( \big\| f \|_{H^{3}_{\eps, \delta} }
+ \big\|g  \big\|_{H^2_{\eps, \delta} }\big).
\end{equation}
\end{prop}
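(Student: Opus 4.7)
\medskip
\noindent
\textbf{Proof plan for Proposition \ref{prop72}.}
The plan is to combine the linearized Chapman--Enskog inversion of Proposition \ref{uprop} with the energy estimates of Propositions \ref{energypropL2} and \ref{estHs}, using the $\eps$-smallness of the back-reaction to close a bootstrap on the macroscopic component $u$.

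\medskip
First I would apply Proposition \ref{uprop} to the linearized profile equation \eqref{intpertu}. Since $U$ satisfies the phase condition \eqref{phasecond}, one has
\[
\|u\|_{L^2_{\eps,\delta}} \;\le\; C\eps^{-1}\|A_{12}r - f\|_{L^2_{\eps,\delta}},
\]
so everything reduces to controlling $\|A_{12}r\|_{L^2_{\eps,\delta}}$. Inspecting the explicit form of $r$ displayed just after \eqref{T2bs7}, and using that $dv_*(\bar u_{NS})= -p$ is $O(\eps)$ because $p(\underline u)$ vanishes (cf.~\eqref{smallp}), together with the bound $\|\bar u'_{NS}\|_{L^\infty}=O(\eps^2)$ from \eqref{NSbds}, yields
\[
\|A_{12}r\|_{L^2_{\eps,\delta}} \;\le\; C\bigl(\|\tilde v'\|_{L^2_{\eps,\delta}} + \|g\|_{L^2_{\eps,\delta}} + \eps\|f'\|_{L^2_{\eps,\delta}}\bigr) + O(\eps^3)\,\|u\|_{L^2_{\eps,\delta}}.
\]

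\medskip
Next I would invoke Proposition \ref{estHs} with $k=1$ to estimate $\|\tilde v'\|_{L^2_{\eps,\delta}}$, and Proposition \ref{energypropL2} to control the $\|U'\|_{L^2_{\eps,\delta}}$ and $\|\tilde v\|_{L^2_{\eps,\delta}}$ remainders that appear on the right-hand side of Proposition \ref{estHs}. The crucial feature is that the feedback on $u$ coming from these two energy estimates enters only through a factor $\eps^2\|u\|_{L^2_{\eps,\delta}}$ (the product of one $\eps$ from the $\eps^k$ prefactor in Proposition \ref{estHs} and another $\eps$ from the $\eps\|u\|_{L^2_{\eps,\delta}}$ term in Proposition \ref{energypropL2}). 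Combining with the previous display and bounding the source terms uniformly by $\|f\|_{H^3_{\eps,\delta}}+\|g\|_{H^2_{\eps,\delta}}$ in the $H^s_\eps$ scaling \eqref{defnorm}, I obtain
\[
\|u\|_{L^2_{\eps,\delta}} \;\le\; C\eps^{-1}\bigl(\|f\|_{H^3_{\eps,\delta}}+\|g\|_{H^2_{\eps,\delta}}\bigr) \;+\; C\eps\,\|u\|_{L^2_{\eps,\delta}},
\]
and the last term may be absorbed into the left for $\eps\le\eps_0$ small enough, leaving the desired bound for $\|u\|_{L^2_{\eps,\delta}}$.

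\medskip
With $\|u\|_{L^2_{\eps,\delta}}$ thus controlled, the remaining pieces of $\|U\|_{H^2_{\eps,\delta}}$ follow directly: the components $\|v\|_{L^2_{\eps,\delta}}$, $\|U'\|_{L^2_{\eps,\delta}}$, and $\|U''\|_{L^2_{\eps,\delta}}$ are bounded by Propositions \ref{energypropL2} and \ref{estHs} (with $k=1$), using the identity $v = \tilde v - pu$ and $\|pu\|_{L^2_{\eps,\delta}} = O(\eps)\|u\|_{L^2_{\eps,\delta}}$ to pass from $\tilde v$ to $v$. Reassembling all three contributions with the correct $\eps$-weights from \eqref{defnorm} yields \eqref{invbdH2s7}. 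The main technical obstacle is precisely the quantitative closing of the bootstrap in the paragraph above: one must check that the coupling between the macroscopic inversion (which loses an $\eps^{-1}$) and the microscopic energy estimates (which gain back at least $\eps^2$ on the feedback to $u$) leaves a net factor $\eps$ in front of $\|u\|_{L^2_{\eps,\delta}}$, small enough to be absorbed. This is exactly where the sharp $\eps^k$ gain in the higher-order estimate \eqref{sharph2eq}, inherited from the Kawashima-type estimate of Lemma \ref{basicest}, is essential.
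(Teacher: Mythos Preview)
Your proposal is correct and follows essentially the same approach as the paper's own proof: apply Proposition~\ref{uprop} to \eqref{intpertu} to get the macroscopic bound \eqref{temp2}, then use Propositions~\ref{energypropL2} and \ref{estHs} (with $k=1$) to control $\|\tilde v'\|_{L^2_{\eps,\delta}}$ with an $\eps^2\|u\|_{L^2_{\eps,\delta}}$ feedback, absorb for small $\eps$, and feed the resulting bound on $u$ back into the energy estimates to assemble $\|U\|_{H^2_{\eps,\delta}}$. The only difference is cosmetic: you track slightly sharper $\eps$-powers on the $f'$ and $u$ contributions to $A_{12}r$ (via $dv_*(\bar u_{NS})=O(\eps)$), whereas the paper records the cruder $O(|f'|+\eps^2|u|)$, but either suffices to close the bootstrap.
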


\begin{proof}
Going back now to \eqref{intpertu}, $u$ satisfies 
$$
\begin{aligned}
\bar b_* u'- \bar {dh}_* u &=  O(|\tilde v'|+ |g| + |f'| + \eps^2 | u |  )  -   f,
\end{aligned}
$$
   If in addition  $u$ satisfies the condition \eqref{phase}
then  
\begin{equation}
\label{temp2}
\|u\|_{L^2_{\eps, \delta}}\le C   \eps^{-1} 
( \|\tilde v'\|_{L^2_{\eps, \delta} }
+ \|(f, f',g)\|_{L^2_{\eps, \delta} }     + \eps^2  \| u \|_{L^2_{\eps, \delta}} \big) . 
\end{equation}

By  Proposition~\ref{energypropL2} and Proposition~\ref{estHs} for $k = 1$, we have 
  \begin{equation} 
  \label{est77}
\big\| U'   \big\|_{L^2_{\eps, \delta} }   + \big\| \tilde v   \big\|_{L^2_{\eps, \delta} }  \le 
C    \big( \big\| (f, f', f'', g, g') \|_{L^2_{\eps, \delta} }
 + \eps  \big\| u  \big\|_{L^2_{\eps, \delta} }  \big).
\end{equation}
\begin{equation}
\label{est78}
\begin{aligned}
  \|   U'' \|_{L^2_{\eps, \delta}} + &
   \big\| \tilde v '   \big\|_{L^2_{\eps, \delta} }  \le 
   \\
& C    \big( \big\| (f', f'', f''',  g',   g'') \|_{L^2_{\eps, \delta} }
 + \eps  \big\| U'  \big\|_{L^2_{\eps, \delta} }   + 
 \eps^2   \big\| u  \big\|_{L^2_{\eps, \delta} } \big).
\end{aligned}
 \end{equation}
Combining these estimates,  
this implies 
 \begin{equation*} 
 \begin{aligned}
  \big\| \tilde v '   \big\|_{L^2_{\eps, \delta} }  & \le    
C      \big( \big\| (f', f'', f''',  g',   g'') \|_{L^2_{\eps, \delta} } + \eps \big\| (f, f', f'', g, g') \|_{L^2_{\eps, \delta} }
 + \eps^2  \big\| u  \big\|_{L^2_{\eps, \delta} }  \big)\\
& \le    
C      \big(   \eps \big\| (f, f', f'', g, g') \|_{H^1_{\eps, \delta} }
 + \eps^2  \big\| u  \big\|_{L^2_{\eps, \delta} }  \big).
 \end{aligned}
\end{equation*}
Substituting in \eqref{temp2}, yields 
$$ 
\eps \|u\|_{L^2_{\eps, \delta}} \le C \big(  \|(f, f',g )\|_{L^2_{\eps, \delta}} + 
 \eps \|(f, f',f'',g, g' )\|_{H^1_{\eps, \delta}}    
+ \eps^2 \|  u \|_{L^2_{\eps, \delta}} \big). 
$$
Hence for $\eps $ small,  
\begin{equation}\label{temp3}
\eps \|u\|_{L^2_{\eps, \delta}} \le C \big(  \|(f, f',g )\|_{L^2_{\eps, \delta}} + 
 \eps \|(f, f',f'',g, g' )\|_{H^1_{\eps, \delta}}     \big). 
\end{equation}
 
Plugging this estimate in \eqref{est77} 
 \begin{equation} 
  \label{est711}
\big\| U'   \big\|_{L^2_{\eps, \delta} }   + \big\| \tilde v   \big\|_{L^2_{\eps, \delta} } 
+  \eps  \big\| u  \big\|_{L^2_{\eps, \delta} }  \le 
C     \big\| (f, f', f'', g, g') \|_{H^1_{\eps, \delta} }
 +  \big).
\end{equation}
Hence, with \eqref{est78}, one has 
\begin{equation}
\label{est712}
\begin{aligned}
  \|   U'' \|_{L^2_{\eps, \delta}} + &
   \big\| \tilde v '   \big\|_{L^2_{\eps, \delta} }  \le 
   \\
& C    \big( \big\| (f', f'', f''',  g',   g'') \|_{L^2_{\eps, \delta} }
 + \eps   \big\| (f, f', f'', g, g') \|_{H^1_{\eps, \delta} }\big).
\end{aligned}
 \end{equation}
Therefore, 
 \begin{equation}
\label{est713}
 \big\| U'   \big\|_{H^1_{\eps, \delta} }          
+  \big\| \tilde v   \big\|_{L^2_{\eps, \delta} } 
+  \eps  \big\| u   \big\|_{L^2_{\eps, \delta} }  \le 
  C     \big\| f, f', f'', g, g'  \big\|_{H^1_{\eps, \delta} }  
\end{equation}
The left hand side dominates 
$$
  \big\| U'   \big\|_{H^1_{\eps, \delta} }  + \eps  \big\| U'   \big\|_{L^2_{\eps, \delta} } 
  =  \eps \big\| U'   \big\|_{H^2_{\eps, \delta} } 
  $$
and the right hand side is smaller than or equal to  
$  \big\|  f  \big\|_{H^2_{\eps, \delta} } +  \big\| g   \big\|_{H^1_{\eps, \delta} } $.
The estimate \eqref{invbdH2s7}  follows.   
\end{proof}

Knowing a bound  for $\| u \|_{L^2_{\eps, \delta}}$, Proposition~\ref{estHs} immediately implies

\begin{prop}\label{prop73}
There are  constants $C$,  $\eps_0 > 0 $ and  $\delta_0 > 0$   
and  for  $s \ge 3$  there is a constant $C_s$
such that 
for $\eps \in ]0, \eps_0]$, $\delta \in [0, \delta_0]$,     
 $f \in H^{s+1}_{\eps, \delta} $, $g \in H^{s}_{\eps, \delta} $ and 
$U \in H^s_{\eps, \delta}$ satisfying  
  \eqref{neweq} and \eqref{phasecond}, one has 
 \begin{equation}\label{invbdHs7}
\big\|  U   \big\|_{H^s_{\eps, \delta} }\le 
C\eps^{-1}\big( \big\| f \|_{H^{s+1}_{\eps, \delta} }
+ \big\|g  \big\|_{H^s_{\eps, \delta} }\big) + C_s  \big\|  U  \big\|_{H^{s-1}_{\eps, \delta} } . 
\end{equation}

\end{prop}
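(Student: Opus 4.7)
The plan is to derive \eqref{invbdHs7} as an essentially direct corollary of Proposition~\ref{estHs}, by matching the scaling of the $\eps$-weighted Sobolev norms to the right--hand side of the energy estimate. No genuinely new analytic input is required; the argument is a bookkeeping exercise on the weights in \eqref{defwnorm}, with the conceptual work already contained in the preceding propositions.

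First I observe that by the very definition of the norms,
\[
\|U\|_{H^s_{\eps,\delta}} \;\le\; \|U\|_{H^{s-1}_{\eps,\delta}} \;+\; \eps^{\,-s}\,\|\partial_x^{s-1} U'\|_{L^2_{\eps,\delta}},
\]
since the only contribution to $\|U\|_{H^s_{\eps,\delta}}$ not already in $\|U\|_{H^{s-1}_{\eps,\delta}}$ is the top--order term $\eps^{1/2-s}\|e^{\delta\eps\langle x\rangle}\partial_x^s U\|_{L^2} = \eps^{-s}\|\partial_x^{s-1}U'\|_{L^2_{\eps,\delta}}$. So the whole task is to control $\eps^{-s}\|\partial_x^{s-1}U'\|_{L^2_{\eps,\delta}}$ by the right--hand side of \eqref{invbdHs7}.

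For this I apply Proposition~\ref{estHs} with $k=s-1$, which yields
\[
\|\partial_x^{s-1}U'\|_{L^2_{\eps,\delta}} + \|\partial_x^{s-1}\tilde v\|_{L^2_{\eps,\delta}}
\le C\,\|\partial_x^{s-1}(f,f',f'',g,g')\|_{L^2_{\eps,\delta}}
+\eps^{s-1}C_{s-1}\bigl(\|U'\|_{H^{s-2}_{\eps,\delta}}+\eps\|\tilde v\|_{H^{s-2}_{\eps,\delta}}+\eps\|u\|_{L^2_{\eps,\delta}}\bigr).
\]
Multiplying through by $\eps^{-s}$, the source term becomes $C\eps^{-s}\|\partial_x^{s-1}(f,f',f'',g,g')\|_{L^2_{\eps,\delta}}$, which, matching powers of $\eps$ in \eqref{defwnorm}, is precisely the top--order contribution to $C\eps^{-1}\bigl(\|f\|_{H^{s+1}_{\eps,\delta}}+\|g\|_{H^{s}_{\eps,\delta}}\bigr)$ (the three derivatives of $f$ up to order $s+1$ fit into $\|f\|_{H^{s+1}_{\eps,\delta}}$; the two derivatives of $g$ fit into $\|g\|_{H^s_{\eps,\delta}}$).

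For the remainder, the scaling identity $\|U'\|_{H^{s-2}_{\eps,\delta}} \le \eps\,\|U\|_{H^{s-1}_{\eps,\delta}}$ (each $\partial_x$ costs a factor $\eps$ in the norm hierarchy) and the comparison $\|\tilde v\|_{H^{s-2}_{\eps,\delta}}\le \|v\|_{H^{s-2}_{\eps,\delta}} + O(\eps)\|u\|_{H^{s-2}_{\eps,\delta}} \le C\|U\|_{H^{s-2}_{\eps,\delta}}$ (by \eqref{smallp} and \eqref{tildev}) reduce the remainder to
\[
\eps^{-s}\,\eps^{s-1}C_{s-1}\bigl(\|U'\|_{H^{s-2}_{\eps,\delta}}+\eps\|\tilde v\|_{H^{s-2}_{\eps,\delta}}+\eps\|u\|_{L^2_{\eps,\delta}}\bigr)
\le C'_s\,\|U\|_{H^{s-1}_{\eps,\delta}},
\]
which is exactly the trailing term in \eqref{invbdHs7}. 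Assembling these two contributions gives the estimate. The only point requiring any care will be the accurate bookkeeping of $\eps$--powers in the first step; there is no substantive obstacle, since Proposition~\ref{prop72} already supplies the finiteness of $\|U\|_{H^2_{\eps,\delta}}$ needed to start the induction at $s=3$ and propagate upward.
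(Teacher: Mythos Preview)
Your proof is correct and follows essentially the same route as the paper, which simply states that the result follows immediately from Proposition~\ref{estHs} once a bound on $\|u\|_{L^2_{\eps,\delta}}$ is known. You have carefully carried out the $\eps$--power bookkeeping that the paper leaves implicit: applying Proposition~\ref{estHs} at level $k=s-1$, rescaling by $\eps^{-s}$ to match the top term of the $H^s_{\eps,\delta}$ norm, and absorbing the commutator remainder into $C_s\|U\|_{H^{s-1}_{\eps,\delta}}$ via the identity $\|U'\|_{H^{s-2}_{\eps,\delta}}\le \eps\,\|U\|_{H^{s-1}_{\eps,\delta}}$.
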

%=============================================

%TODO: keep this useful remark even though it appears
%already in the semilinear paper.  Right?  -K
\begin{rem}\label{goodman}
\textup{
The estimate of Proposition \ref{uprop} may be recognized
as somewhat similar to the estimates of
Goodman \cite{Go} obtained by energy methods in the time-evolutionary case,
the same ones used by Liu and Yu \cite{LY} to control the macroscopic
variable $u$.
More precisely, the argument is a simplified version of the 
one used by Plaza and Zumbrun \cite{PZ} to show time-evolutionary
stability of general small-amplitude waves.
}
\end{rem}

%=============================================

\subsection{Proof of Proposition~\ref{uprop}}\label{recall}
 
By Assumption \ref{goodred}(i), we may assume that there are  linear coordinates  
$u = (u_1, u_2) \in \RR^{n_1} \times \RR^{n_2}$ 
and $h =( h_1, h_2)  \in \RR^{n_1} \times \RR^{n_2}$, with $n_2 = \mathrm{rank} \ b_* (\bar u) $  
 such that   
\begin{equation}
\label{blokbs}
   b_* (\bar u)  = \begin{pmatrix}0 & 0 \\ b_{21} (\bar u) & b_{22} (\bar u) \end{pmatrix}
 \end{equation}
 and $b_{22}(\bar u)$ is uniformly invertible on   $ \cU_*$. 
 Introducing the new variable
 \begin{equation}
 \label{goodu}
 \tilde u_2 = u_2 +   \bar V  u_1, 
 \quad \bar V =  ( b^{22}) ^{-1}  b_{21}   (\bar u_{NS}),
 \end{equation}
 the equation 
$\bar b_* u'- \bar{dh}_*u =h$  has the form: 
 \begin{equation}
 \label{blockprofeq}
 \begin{aligned}
\bar a^{11}  u_1 + \bar a^{12}  \tilde u_2 =  h_1,\\
\bar b^{22}  \tilde u_2'   - \bar a^{21}u_1 - \bar a^ {22} \tilde u_2  =     h_2
\end{aligned}
 \end{equation}
 where 
 $$
 \bar a :=   \bar {dh}_*     \begin{pmatrix} \Id & 0 \\ - \bar V   & \Id \end{pmatrix} 
 +   \bar b*  \begin{pmatrix} 0 & 0 \\   \bar V '   & 0 \end{pmatrix}. 
 $$
 
 Assumption \ref{goodred}(ii) implies that the left upper corner block 
 $\bar a^{11}$  is uniformly invertible. Solving the first equation
for $u_1$,   we obtain
the reduced nondegenerate ordinary differential equation 
$$
\bar b_*^{22} \tilde u_2'  +   \bar a^ {21}(\bar a^{11})^{-1}
  \bar a^ {12}  \tilde u_2 
- \bar a^{22} \tilde u_2=  h_2  +  \bar a^ {21}(\bar a^{11})^{-1}
h_1
$$
or
\begin{equation}\label{princ}
\begin{aligned}
\check b u_2'-\check a u_2
 = \check h =O(|h_1|+|h_2|). 
\end{aligned}
\end{equation}
 
Note that $\det \bar dh_* =\det \bar a^{11} \det \check a$ by
standard block determinant identities, 
so that $\det \check a  \sim \det \bar dh_*$ by Assumption \ref{goodred}(ii).
Moreover, as established in \cite{MaZ4},  by  Assumption \ref{profass} and the construction of the profile
$\bar u_{NS}$  
we find that $ m := (\check b)^{-1}\check a$ has the following properties:

 \quad  i)  with $m_\pm$ denoting the end  points values of $m$, there is $\theta > 0$ such that 
  for all $k$ : 
\begin{equation}
\label{est718}
| \D_x^k ( m(x) - m_\pm)  | \lesssim  \eps^{k+1} e^{ - \eps  \theta | x |};       
\end{equation}
   
\quad    ii)   $m (x)$  has a single simple eigenvalue   of order $\eps$, 
dented by $ \eps \mu (x) $,  and  there is $c > 0$ such that 
   for all $x$ and $\eps $ 
the other eigenvalues $\lambda$ satisfy  $| \Re \lambda | \ge c$; 

\quad iii)  the end point values  $\mu_\pm$ of $\mu$  satisfy
\begin{equation}
\label{est719} 
\mu_-  \ge   \alpha    \qquad \mu_+ \le - \alpha  
\end{equation}
for some $\alpha  > 0$ independent of $\eps$.

\smallbreak 
In the strictly parabolic case $\det b_*\ne 0$, this follows
by a lemma of Majda and Pego \cite{MP}.  

 At this point, we have reduced to the case 
\begin{equation}\label{semifinalform}
u_2'- m (x)   u_2 =  O(|h_1|+|h_2|),
 \end{equation}
with $m$ having the properties listed above. 
The important feature is that $m' = O (\eps^2)  << \eps $,  the spectral gap between stable, unstable, and $\eps$-order subspaces
of $m$.  The conditions above imply that there is a matrix 
$ \omega $ such that 
$$
p := \omega ^{ -1} m \omega   =    \blockdiag\{p^+,  \eps  \mu , p-\},
$$
 where the spectrum of $p_\pm$ lies in $ \pm \Re \lambda \ge c $. Moreover, $\omega $ 
 and $p$ satisfies estimates 
 similar to \eqref{est718}. The change of variables 
 $u_2 =  \omega  z$  reduces \eqref{semifinalform}
 to 
 \begin{equation}
 \label{finalform}
 z'  -   p z   =      \omega^{-1} \omega' z    +     O(|h_1|+|h_2|)  . 
 \end{equation}

The equations  $(z^+)'- p^+ z^+ = h^+ $ and
$(z^-)'- p^-z^-= h^- $ either by standard linear theory 
\cite{He} or by symmetrizer estimates as in \cite{GMWZ},  admit unique
  solutions in weighted $L^2$ spaces,  satisfying 
$$
\| e^{  \delta | x |}  z^ \pm\|_{L^2}\le C \| e^{   \delta | x |}  h^\pm \|_{L^2},  
$$
 provided that $\delta$ remains small, typically  $\delta < | \Re p^\pm |$. 

The equation $z_0' -  \eps \mu  z_0  =  h_0 $ may be converted by
the change of coordinates $x\to \tilde x:= \eps x$ to 
\begin{equation}\label{finalz0}
\D_{\tilde x}  \tilde z_0-\tilde  \mu (\tilde x) z_0=   \tilde h_0 (\tilde x)  = \eps^{-1} h_0 (\tilde x/ \eps) ,
\end{equation}
where $\tilde z_0 (\tilde x) = z_0(\tilde x / \eps) $ and $\tilde \mu (\tilde x):=
\mu (\tilde x/\eps)$. By  \eqref{est718} 
$$
| \tilde \mu (\tilde x)- \mu_\pm|\le Ce^{-\theta |\tilde x|}
$$
with $\mu_\pm $ satisfying  \eqref{est719}. 
This equation is underdetermined with index one, reflecting the 
translation-invariance of the underlying equations.
However, the operator $\partial_{\tilde x}-\tilde  \mu$ has a bounded
$L^2$ right inverse $(\partial_{\tilde x}-\tilde  \mu )^{-1}$, as
may be seen by adjoining an additional artificial constraint
\begin{equation}
  \tilde z_0(0) = 0
\end{equation}
 fixing the phase.  This can be seen by solving explicitly the 
 equation  or  applying the gap lemma
of \cite{MeZ2} to reduce the problem to two constant-coefficient
equations on $\tilde x\gtrless 0$, with boundary conditions 
at $z = 0$. 
 We obtain as a result that  
$$
\|e^{   \delta | \tilde x |} \tilde z_0\|_{L^2 }\le C  \| e^{   \delta | \tilde x |} \tilde h_0\|_{L^2}
$$
if $\delta < \min \{\alpha, \theta\}$, which
yields  by  rescaling  
the   estimate
$$
\|e^{ \eps  \delta | x |} z_0\|_{L^2 }\le C\eps^{-1} \| e^{ \eps  \delta | x |} h_0\|_{L^2}
$$

Together with the (better) previous estimates, this gives
existence and uniqueness for the equation
$$
z'  -p z = h , \qquad z_0(0)= 0
$$ with the estimate 
$\| e^{ \eps  \delta | x |}z\|_{L^2}\le C\eps^{-1} \| e^{ \eps  \delta | x |}h\|_{L^2}$.  Because $\omega^{-1} \omega' = O(\eps^2)$, 
this implies that for $\eps$ small enough, the equation \eqref{finalform} with  $z_0(0) = 0$ 
has a unique solution.   Tracing back to the original variables $u$, the condition
$z_0 (0) = 0$ translates into a condition of the form 
$\ell_\eps \cdot u(0) = 0$. Therefore,   
the equation 
$ \bar b_* u' -  \bar d f_* u = h $ has a unique solution such $u$ that  
$ 
\ell_\eps \cdot u(0) = 0
$, which satisfies 
$$
\|e^{ \eps  \delta | x |}u\|_{L^2}\le C\eps^{-1} \|e^{ \eps  \delta | x |} h\|_{L^2}
$$ 
for $\delta$ and $\eps $ small enough, finishing the proof of Proposition~\ref{uprop}.

 %%%%%%%%%%%%%%%%%%%%%%%%%%%%%%%%%%%%
 %%%%%%%%%%%%%%%%%%%%%%%%%%%%%%%%%%%%%
 %
 %  SECTION 8
 %
 % 
 
\section{Existence for the linearized problem}

The desired estimates \eqref{invbdH2} and \eqref{invbdHs} are given by 
Propositions~\ref{prop72} and \ref{prop73}. 
It remains to prove existence for the linearized problem
with phase condition $u(0)\cdot r(\eps)=0$.
This we carry out using a vanishing
viscosity argument.

Fixing $\eps$, consider in place of $\csL U=F$ the 
family of modified equations
\begin{equation}\label{modeq}
\csLe U:=\csL U - \eta \begin{pmatrix}u'\\v''\end{pmatrix}= F := 
\begin{pmatrix}f \\g\end{pmatrix},
\quad
\ell_\eps \cdot u(0) =0.
\end{equation}
Differentiating the first equation yields 
\begin{equation}\label{modeqd}
A  U'  - d Q (x) U  -   U'' = 
\begin{pmatrix}f' \\g\end{pmatrix},
\quad
\ell_\eps \cdot u(0) =0.
\end{equation}
where  $d Q (x)$ denotes here the matrix $ dQ (\bar u_{NS}, v_* (\bar u_{NS}))$.

\subsection{Uniform estimates} 
We first prove uniform a-priori estimates. 
We denote by   $ \sS  $ the   Schwartz space and for 
$\delta \ge 0$, by $\sS_{\eps \delta} $ the space of functions $u$ such that 
$e^{ \eps \delta \la x \ra } u \in \sS$, with $\la x \ra = \sqrt{1 + x^2}$ as in \eqref{modx}. 

\begin{prop}
\label{lemunifbds}  
There are  constants  $\eps_0 > 0 $, $\delta_0 > 0$   and $\eta_0> 0$, and for 
all $s \ge 2$ a constant $C_s$, 
such that 
for $\eps \in ]0, \eps_0]$, $\delta \in [0, \delta_0]$, $\eta \in ]0, \eta_0]$, 
and  $U  $ and $F$   in  $ \sS_{\eps\delta}(\RR) $, 
satisfying \eqref{modeq}  
  \begin{equation}\label{unifbds83}
\big\|  U  \big\|_{H^s_{\eps, \delta} }\le 
C_s \eps^{-1}\big( \big\| f \|_{H^{s+1}_{\eps, \delta} }
+ \big\|g  \big\|_{H^s_{\eps, \delta} }\big).
\end{equation}

\end{prop}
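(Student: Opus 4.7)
The plan is to mirror the a priori estimates of Sections~\ref{energy}--\ref{linCEestimates}, tracking carefully the effect of the viscosity regularization and showing that every constant may be taken independent of $\eta \in (0,\eta_0]$ once $\eta_0$ is small enough. The guiding principle is that the viscous term $-\eta (u',v'')^T$ (equivalently $-\eta U''$ in the differentiated form \eqref{modeqd}) is a nonnegative perturbation in the symmetric energy estimate, and a small perturbation of the Chapman--Enskog reduction, so no structural feature of the argument is altered.

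First I would revisit the basic symmetric $H^1$ estimate of Lemma~\ref{basicest} applied to \eqref{modeqd}. Testing against the symmetrizer $\cS = \D_x^2 + \D_x \circ K - \lambda \Id$, the extra viscous contribution is
\[
\Re\big(\cS(-\eta U''),\, U\big)_{L^2}
\;=\; \eta\|U''\|_{L^2}^2 + \eta \lambda \|U'\|_{L^2}^2 - \eta\,\Re(K U'',U')_{L^2},
\]
whose indefinite cross term is absorbed by the coercive piece $\gamma \|U'\|_{L^2}^2$ coming from the Kawashima term, uniformly in $\eta$ small. Hence the conclusion of Lemma~\ref{basicest} persists, with the bonus bound $\sqrt{\eta}\,\|U''\|_{L^2_{\eps,\delta}}$ on the left. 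The weighted and differentiated variants of Propositions~\ref{energypropL2} and \ref{estHs} follow by the same arguments, since differentiating \eqref{modeq} $k$ times introduces only lower-order commutators with $\bar u_{NS}$-dependent coefficients, together with additional $-\eta \partial_x^k U''$ terms that repeat the above structure.

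Next I would redo the Chapman--Enskog reduction of Section~\ref{linCEestimates}. The modified first equation of \eqref{modeq} now reads $A_{11}u + A_{12}v = f + \eta u'$, while the modified second equation yields
\[
\tilde v \;=\; c_*(\bar u_{NS})\, u' + r - \eta\, \partial_v q^{-1} v'',
\]
so substituting back gives the reduced equation
\[
\bar b_* u' - \bar{dh}_* u \;=\; A_{12} r - f - \eta u' + O(\eta |v''|).
\]
The crucial point is that the $\eta$ modifications appear on the right-hand side and do not perturb the symbol $\bar b_*\partial_x - \bar{dh}_*$; hence the right inverse of Proposition~\ref{uprop} applies unchanged and delivers an $\eps^{-1}$ bound on $\|u\|_{L^2_{\eps,\delta}}$ under the phase condition $\ell_\eps\cdot u(0)=0$. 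The new source terms $\eta\|u'\|$ and $\eta\|v''\|$ are majorized by $\sqrt{\eta}\cdot\big(\sqrt{\eta}\|U''\|\big)$, hence by the right-hand side of the uniform energy estimate (times $\sqrt{\eta_0}$); they are absorbed into the dissipative left-hand side for $\eta_0$ small. Combining exactly as in the proofs of Propositions~\ref{prop72} and \ref{prop73} yields the $H^s$ estimate \eqref{unifbds83} with $\eta$-independent constants.

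The main obstacle, and the only place where some care is needed, is this last absorption step: one must check that the $\eta$-perturbations of the Chapman--Enskog reduction are genuinely lower order relative to the $\eps^{-1}$ bound, uniformly as $\eta \to 0$. This reduces to the two inequalities $\eta \ll 1$ (so that the extra $-\eta u'$ in the reduced ODE is a bounded perturbation of the source) and $\eta_0$ small relative to the constants in Lemma~\ref{basicest} (so that the dissipation-plus-Kawashima estimate dominates the viscous commutator with $K$). Both are achieved by fixing $\eta_0$ sufficiently small in advance, independent of $\eps$ and $\delta$, which is exactly the uniformity asserted in the proposition.
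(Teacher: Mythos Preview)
Your proposal is correct and follows essentially the same route as the paper: repeat the symmetrizer estimate of Lemma~\ref{basicest} noting that the viscous term contributes the favorable $\eta\|U''\|^2 + \eta\lambda\|U'\|^2$ minus a Kawashima cross term absorbable for small $\eta$, then carry through the weighted and higher-order estimates, and finally redo the Chapman--Enskog reduction with the extra $\eta$ terms treated as small sources and absorbed after applying Proposition~\ref{uprop}. One small slip: the term $\eta\|u'\|$ is not majorized by $\sqrt{\eta}\cdot(\sqrt{\eta}\|U''\|)$ but rather by $\eta\|U'\|$, which is nonetheless absorbed by the $\|U'\|_{H^1_{\eps,\delta}}$ already present on the left of the combined estimate; the paper records the extra source as $\eta(\|U'\|+\|U''\|)$ and absorbs both for $\eta$ small.
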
 

\begin{proof}
 The argument of Proposition~\ref{energypropL2}
  goes through essentially unchanged, with
new $\eta$ terms providing additional favorable higher-derivative
terms sufficient to absorb new higher-derivative errors coming
from the Kawashima part.    

Thus we are led to equations of the form 
%CHANGED, right? (K):
%\eqref{simplfeq} with the additional 
\eqref{apriorieq} with the additional
%ENDCHANGED
term $- \eta U''$ in the left hand side.  Using the symmetrizer $\cS $ \eqref{def615}, 
one gains 
$  \eta   \| U'' \|^2_{L^2}  +\lambda \| U' \|^2_{L^2} $ in the minorization 
of $\Re (\cS F, U ) $  and loses commutator terms which are dominated by 
$$
\eta \| S'' \|_{L^\infty} ( \| U'\|^2_{L^2}  + \| U \|_{L^2} \| U' \|_{L^2} ) 
+ \eta \| K \|_{L^\infty}  (  \| U' \|_{L^2} + \| U \|_{L^2}) \| U'' \|_{L^2} ,  
$$
which can be  absorbed by the left hand side  yielding uniform estimates  
\begin{equation}\label{sharph1eq8}
\sqrt \eta \| \widetilde U'' \|_{L^2}   +   \| \widetilde U'  \|_{L^2} + \| \tilde v \|_{L^2}  \le C \big(     
\|  f  \|_{H^2}   +   \| h \|_{H^1} + \| \tilde g  \|_{H^1}   
+ \eps \| u \|_{L^2} \big). 
\end{equation}
Going back to \eqref{modeqd}, this implies uniform estimates of the form 
  \begin{equation}\label{invbd6}
\sqrt \eta \| U''|_{L^2_{\eps, \delta}}  +   \big\| U'   \big\|_{L^2_{\eps, \delta} }   + \big\| \tilde v   \big\|_{L^2_{\eps, \delta} }  \le 
C    \big( \big\| (f, f', f'', g, g') \|_{L^2_{\eps, \delta} }
 + \eps  \big\| u  \big\|_{L^2_{\eps, \delta} }  \big).
\end{equation}
for $\delta = 0$, and next for $\delta \in [0, \delta_0]$ with 
$\delta_0 > 0$ small, as in the proof of Proposition~\ref{energypropL2}.  

\medbreak

When commuting derivatives to the equation, the additional term $ \eta \D_x^2$ brings no new 
term and the proof of Proposition~\ref{estHs} can be repeated without changes, yielding 
estimates of the form 
\begin{equation}\label{est87}
\begin{aligned}
\sqrt \eta \| D_x^k U'' \|_{L^2_{\eps, \delta} }   +  & \| \D_x^k U' \|_{L^2_{\eps, \delta}}   + 
   \| \partial^k_x \tilde v \|_{L^2_{\eps, \delta}}  
\\  
  \le  C  &
  \| \partial_x^k (f, f', f'', g, g')  \|_{L^2_{\eps, \delta} }     
 \\
 &  +    \eps^k  C_k \big( 
 \| U' \|_{H^{k-1}_{\eps, \delta} } + \eps   \| \tilde v \|_{H^{k-1}_{\eps, \delta} }  +
  \eps \| u \|_{L^2_{\eps, \delta}}\big)  . 
\end{aligned}
\end{equation}

\medbreak

Next, applying the Chapman--Enskog argument of
Section \ref{linCEestimates} to the viscous system, we obtain 
in place of \eqref{intpertu} the equation
\begin{equation}\label{viscintpertu}
\begin{aligned}
\bar b_* u'- \bar {dh}_* u &=   f +   O(|\tilde v'|+ |g| + |f'| ) 
+\eps^2 O( | u | )    +    \eta O(|u'|+|U''|),
\end{aligned}
\end{equation}
where the final $\eta$ term coming from artificial viscosity 
is treated as a source. One applies  Proposition~\ref{uprop} 
to estimate $ \eps \|u\|_{L^2_{\eps, \delta}}$  by the 
$ L^2_{\eps, \delta}$-norm of the right hand side, 
and continuing as in the proof of Proposition~\ref{prop72}, 
the estimate \eqref{est713} is now replaced by 
 \begin{equation}
\label{est89}
\begin{aligned}
\sqrt \eta    \| U'''\|_{L^2_{\eps, \delta}} &+  \big\| U'   \big\|_{H^1_{\eps, \delta} }          
+  \big\| \tilde v   \big\|_{L^2_{\eps, \delta} } 
+  \eps  \big\| u   \big\|_{L^2_{\eps, \delta} } 
\\
& \le 
  C    \big(   \big\| f, f', f'', g, g'  \big\|_{H^1_{\eps, \delta} }  
  + \eta  ( \| U'\|_{L^2_{\eps, \delta}}  + \| U''\|_{L^2_{\eps, \delta}} ) \big) . 
  \end{aligned}
\end{equation}
  Therefore, for $\eta$ small, the new  $O(\eta)$  terms can be absorbed, and 
  \eqref{unifbds83} for $s = 2$ follows as before. The higher order estimates 
 follow from \eqref{est87}. 
\end{proof} 

%=================================================

\subsection{Existence} 

We now prove existence and uniqueness for \eqref{modeq}. 
First, recast the the problem as a first-order system
\begin{equation}
\label{o1red}
\cU'  - \mA \cU = \cF 
\end{equation}
with 
$$
\cU =  \begin{pmatrix}
u\\v\\v'
\end{pmatrix}'
 , \qquad 
\cF = \begin{pmatrix} f\\0\\g \end{pmatrix},
$$
and 
\begin{equation}\label{AA}
\mA:=
\eta^{-1}
\begin{pmatrix}
 A_{11}& A_{12} & 0\\
0 & 0 & \eta I\\
\eta^{-1} A_{21}A_{11}-   Q_{21} & \eta^{-1} A_{21} A_{12}  -  Q_{22}  & A_{22}\\
\end{pmatrix}.
\end{equation}
Next, consider this as a transmission problem or a doubled boundary value problem on $x\gtrless 0$,
with boundary condtitions given by the $n+2r$ matching conditions $\cU(0^-)=\cU(0^+)$ at $x=0$  together with   the phase condition $\ell_\eps  \cdot u(0)  =0$,  
that is  $n+2r+1$ conditions in all: 
\begin{equation}
\label{transmcond}
\cU(0^-)=\cU(0^+), \qquad  \ell_\eps  \cdot u(0)  =0. 
\end{equation}

%TODO: make sure throughout this section that ``matrix'' is replaced
%by ``operator''.
%CHANGED:
%Note  that the coefficient matrix $\mA$ converges exponentially to
%its endstates at $\pm\infty$. 
Note  that the operator-valued coefficient matrix $\mA$ 
converges exponentially to its endstates at $\pm\infty$,
by exponential convergence of $\bar U_{NS}$ and boundedness
of $A$, $Q$.
%ENDCHANGED:

\begin{lem}
\label{lem82}  There is  $\theta_1 > 0$ such that for $\eps$ small enough , the limiting coefficient matrices 
$\mA_\pm$ have no eigenvalue in the strip 
$| \Re z  |  \le \eps \delta_0$. 
\end{lem}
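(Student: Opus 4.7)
The plan is to exploit the Chapman--Enskog reduction together with the Navier--Stokes pencil analysis of Section~\ref{recall}. Seeking $\cU = e^{zx}\cU_0$ with $\cU_0 = (u_0, v_0, w_0)^T$ in $\cU' = \mA_\pm \cU$, one computes from \eqref{AA} that $z$ is an eigenvalue of $\mA_\pm$ iff there exists nontrivial $(u_0, v_0) \in \UU \times \VV$ solving the constant-coefficient system
\begin{align*}
(A_{11} - \eta z) u_0 + A_{12} v_0 &= 0,\\
(z A_{21} - Q_{21}^{\pm}) u_0 + (z A_{22} - Q_{22}^{\pm} - \eta z^2) v_0 &= 0,
\end{align*}
with $w_0 = z v_0$, where $Q_{ij}^\pm$ denote the partials of $q$ evaluated at the limiting equilibrium $(u_\pm, v_*(u_\pm))$.

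By the coercivity of Proposition~\ref{propcoercs} (applied at the Maxwellian $M_{u_\pm}$, which is $\eps$-close to the reference $\underline M$), the operator $-Q_{22}^{\pm}$ is uniformly positive definite on $\VV$ with spectral gap at least $c_0 > 0$ independent of $\eps$. Combined with boundedness of $A_{22}$ after the rescaling of Section~\ref{collop}, this yields uniform invertibility of $Q_{22}^\pm + \eta z^2 - z A_{22}$ on $\VV$ for $|z|$ in a fixed neighborhood of $0$ and $\eta$ small, by Neumann series. Solving the second equation for $v_0$ and substituting into the first reduces the eigenvalue problem to the finite-dimensional equation $N(z, \eta) u_0 = 0$ on $\UU$, where
$$
N(z, \eta) = (A_{11} - \eta z) + A_{12}(Q_{22}^\pm + \eta z^2 - z A_{22})^{-1}(z A_{21} - Q_{21}^\pm).
$$
Using the identity $Q_{21}^\pm = -Q_{22}^\pm\, dv_*(u_\pm)$ coming from differentiating $q(u, v_*(u)) \equiv 0$, and Neumann-expanding the inverse, one obtains $N(z, \eta) = dh_*(u_\pm) - z\, b_*(u_\pm) + O(z^2 + \eta z)$, which is exactly the Navier--Stokes pencil studied in the proof of Proposition~\ref{uprop}, up to lower-order corrections.

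The spectrum of this pencil has already been located: after the block reduction producing $m_\pm = \check b_\pm^{-1} \check a_\pm$, there is a single slow eigenvalue $z = \eps \mu_\pm$ with $|\mu_\pm| \ge \alpha$ and $\mu_-, \mu_+$ real and of strictly opposite signs by \eqref{est719}, while all other finite eigenvalues satisfy $|\Re z| \ge c > 0$ uniformly in $\eps$. Choosing $\delta_0 < \alpha$ and applying standard analytic perturbation in $z$ and $\eta$, every small-$|z|$ root of $\det N(z, \eta) = 0$ avoids the strip $|\Re z| \le \eps \delta_0$ for $\eps, \eta$ small. Modes with $|z|$ bounded below by a fixed constant, for which the Neumann reduction fails, are dispatched directly by pairing the eigenvalue system with $U_0$ in $\HH$, taking real parts, and combining coercivity of $-L_\pm$ on $\VV$ with the Kawashima estimate of Lemma~\ref{kawa}; the resulting inequality $\gamma\|U_0\|_\HH^2 \le C(\eps\delta_0 + \eta)\|U_0\|_\HH^2$ forces $\|U_0\|_\HH = 0$. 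The main obstacle, new to the infinite-dimensional setting, is ensuring the Neumann inversion of $Q_{22}^\pm + \eta z^2 - z A_{22}$ is uniform in $z$ across the infinite-dimensional $\VV$; this rests on the rescaled boundedness of $A_{22}$ from Section~\ref{collop} together with the spectral gap of $Q_{22}^\pm$.
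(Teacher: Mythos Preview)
Your proof is correct and uses the same two ingredients as the paper---the Kawashima estimate to rule out eigenvalues with $|z|$ bounded below, and a Chapman--Enskog reduction to the Navier--Stokes pencil for small $|z|$---but you organize them differently. The paper applies the symmetrizer $\Sigma = |\tau|^2 - i\bar\tau K - \lambda$ uniformly to the full eigenvalue equation first, obtaining the bound $(\eta|\tau|^4 + |\tau|^2)|U|^2 + |\PP_\VV U|^2 \le C(|\Im\tau| + \eps)|\PP_\UU U|^2$, which in one stroke forces $|\tau|$ small and then permits the CE reduction; you instead split a priori into small-$|z|$ and large-$|z|$ regimes and treat each separately, using a direct Neumann inversion of $Q_{22}^\pm + \eta z^2 - zA_{22}$ for the former. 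Your route is slightly more algebraic and perhaps more transparent in the small-$|z|$ regime, while the paper's unified symmetrizer computation avoids having to delimit the two regimes by hand.

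Two small points worth tightening. First, your expansion $N(z,\eta) = dh_* - zb_* + O(z^2 + \eta z)$ is not literally correct as an operator identity: comparing with \eqref{bstar}--\eqref{cstar}, the $O(z)$ coefficient you obtain is $A_{12}Q_{22}^{-1}(A_{21} + A_{22}dv_*)$, which differs from $-b_*$ by $A_{12}Q_{22}^{-1}dv_*\, dh_*$. This does not affect your conclusion, since $N = (I + zM)dh_* - zb_* + O(z^2)$ with $M$ bounded, and the factor $(I+zM)$ is invertible for small $z$, so small roots of $\det N = 0$ coincide to leading order with those of $\det(dh_* - zb_*) = 0$; but you should say this explicitly. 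Second, in the large-$|z|$ case your stated inequality $\gamma\|U_0\|^2 \le C(\eps\delta_0 + \eta)\|U_0\|^2$ is not quite what the symmetrizer yields: the right-hand side should carry $C(\eps\delta_0 + \eps)$ (coming from $|\Re z| \le \eps\delta_0$ and the $O(\eps)$ coupling in $L_\pm$), not $\eta$. The conclusion is unaffected.
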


\begin{proof}
The proof is parallel to the proof of the estimates. Dropping the $\pm$, 
 suppose  that  $i\tau$ is an eigenvalue of $\mA$, or equivalently that 
 there is a constant vector $   U\ne 0$ such that 
  $ e^{i\tau x}  U$ is a solution of 
of equations  \eqref{modeq}
Thus 
\begin{equation}
\label{eq814}
\begin{aligned}
& A_{11}   u + A_{12}   v = i \tau \eta   u,
\\
&(i \tau   A - Q  + \tau^2 \eta )   U = 0 . 
\end{aligned}
\end{equation}

%(TODO: replace the below with argument in spirit of basic estimate,
%without use of $S$)
%DONE-K
In the first equation, introduce once again the variable 
$\tilde v = v + Q_{22}^{-1}  Q_{21 } u $, so that the equations 
are transformed to 
\begin{equation}
\label{eq815}
\begin{aligned}
& A^*_{11}  u + A_{12} \tilde v = i \tau \eta   u,
\\
&(i \tau A - Q + \tau^2 \eta )  U = 0 . 
\end{aligned}
\end{equation}
Denoting by $ K$ the end point values 
of the Kawashima multipliers associated  to  $A$ and 
$Q$,  consider the symmetrizer
$$
\Sigma = | \tau|^2   - i \overline \tau K  - \lambda  . 
$$
Multiplying the second equation in \eqref{eq815} by  $\Sigma$ and taking the real part of the 
scalar product with $U$ yields
$$
\begin{aligned}
  | \tau |^2   \Re (K  A & -  Q U, U)  
+ \lambda (Q U, U) + \eta | \tau |^4 (U, U) 
\\
&\le  C \big( | \im \tau | (| \tau |^2 + \lambda ) \big) | U |^2  +  C  
| \tau |  | Q U| | U | 
\\
& \qquad \qquad  + \eta ( | \tau |^2 |\im \tau|^2  + | \tau|^3 + \lambda | \tau |^2) \big)  | U |^2  . 
\end{aligned}
$$ 
Therefore, choosing appropriately $\lambda$, for $\eta$ and 
$|\im \tau |$ sufficiently small, one has 
\begin{equation}
\label{eq816}
(\eta | \tau |^4 +  | \tau |^2)  | U |^2 + | v |^2 \le C 
(| \im \tau |+\eps) | u |^2  
\end{equation}
In particular, $|\tau| $ must be small if $\im \tau $, $\eps$ are small. 

%(TODO: this paragraph must be changed to merge with exposition earlier
%on, not making use of $\tilde v$ coordinate...-K)
%NO! It now looks ok to me, since we did change over to \tilde v in
%the end.  -KZ 6-09.  DONE.
From the equation 
$i \tau A_{21} u + A_{22} v - 
Q_{21} u - Q_{22} v + \eta \tau^2 v  = 0 $
and the fact that $|Q_{21}|=O(\eps)$ by \eqref{smallp},
one deduces that 
$$
\tilde v -   i \tau (Q_{22})^{-1} A_{21} u  
=  O (| \tau |  + \eta | \tau |^2 ) | \tilde v | . 
$$
Substituting in the first equation of \eqref{eq815}, we obtain the 
Chapman-Enskog approximation 
\begin{equation*}
( A_{11}^*  - i \tau \bar b_* ) u =   O( \eta | \tau |  + 
  | \tau| + \eta | \tau|^2 )  | \im \tau|^\mez )  ) | u |
\end{equation*}
 where $\bar b_*$ denotes the end point value of the function  \eqref{bstar}.  
 Therefore,    
\begin{equation}
\label{eq817}
| ( \bar b_*)^{-1}  A^*_{11} u  - i \tau u  |  \le  C  |\im \tau|^\mez  |  \tau |  | u | 
\end{equation}
 with arbitrarily small $c >0$. 
 We know from Assumption~\ref{profass} that for 
 $\eps$ small, $( \bar b_*)^{-1}  A^*_{11}$ has a unique small eigenvalue, of order 
 $O(\eps)$, real. Let us denote it by $\eps \mu$. Then we know that 
 $| \mu |$ is bounded from below, see \eqref{est719}. Then 
 \eqref{eq817} implies that there is a constant $C$ such that  for $| \im \tau |$  small enough, 
 and thus $| \tau| $ small,   
 $| i \tau - \eps \mu | \le   C  |\im \tau|^\mez  |  \tau |  $. Therefore,  
 $ l \im \tau + \eps \mu |  \le \mez \eps | \mu |$ if $\eps$ is small enough. 
%(ENDTODO) (NOW DONE. 6-09).
  
 Summing up, we have proved that if 
 $\eps$ is small enough, $\mA$ has at most one eigenvalue $z $   in the strip 
 $| \re z  \le   \eps 2 | \mu| $,  such that 
 $ |z - \eps  \mu |  \le \mez \eps | \mu|$. 
  This implies the lemma.
 \end{proof}

\begin{rem}
\textup{The same reasoning can be applied to prove that $\mA$ actually has a simple eigenvalue such that  $ |z - \eps  \mu |  \le \mez \eps | \mu|$. }
\end{rem}

\subsubsection{Finite-dimensional case}

We first review the case that $U$ is finite-dimensional, recalling
for completeness the analysis of \cite{MeZ1}.

\begin{prop}[\cite{MeZ1}]\label{viscexist}
There are  constants  $\eps_0 > 0 $, $\delta_0 > 0$   and $\eta_0> 0$ 
such that 
for $\eps \in ]0, \eps_0]$, $\delta \in [0, \delta_0]$, $\eta \in ]0, \eta_0]$, 
and $F$   in  $ \sS_{\eps\delta}(\RR) $, \eqref{modeq}
admits a unique solution $ U\in \sS_{\eps \delta} (\RR)$. 
\end{prop}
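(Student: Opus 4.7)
The plan is to solve \eqref{modeq} by working with its first-order reformulation \eqref{o1red}--\eqref{transmcond}, using the hyperbolic dichotomies supplied by Lemma \ref{lem82} to put the problem in a Fredholm framework, and then invoking the uniform a priori bound of Proposition \ref{lemunifbds} to upgrade this to an isomorphism.

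First I would conjugate by the weight $e^{\eps\delta\la x\ra}$. Since $\la x\ra' \to \pm 1$ at $\pm\infty$, the limiting coefficient matrices become $\mA_\pm \pm \eps\delta I$, which for $\delta \le \delta_0$ remain hyperbolic (no spectrum on $i\RR$) by Lemma \ref{lem82}. Classical Coppel--Palmer roughness theory then furnishes exponential dichotomies on the half-lines $(0,+\infty)$ and $(-\infty,0)$; denote by $d_s^+$ and $d_u^-$ the ranks of the stable projector at $+\infty$ and unstable projector at $-\infty$, which are constant in $\delta \in [0,\delta_0]$. The associated boundary-value operator
\[
  \cB : \cU \ \longmapsto \ \bigl(\cU' - \mA \cU,\ \cU(0^+) - \cU(0^-),\ \ell_\eps \cdot u(0)\bigr)
\]
acting on piecewise-$H^s_{\eps,\delta}$ functions is then Fredholm of index $d_s^+ + d_u^- - (n+2r) - 1$.

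Next I would verify that this index vanishes. The Lax condition in Proposition \ref{NSprofbds} yields $d_u^-(dh_*(u_-)) + d_s^+(dh_*(u_+)) = n + 1$ for the reduced Chapman--Enskog matrix. The $\eta$-regularization adds $2r$ fast eigenvalues of order $\eta^{-1}$ to each $\mA_\pm$, and the block structure of $\mA$ in \eqref{AA}, together with the symmetrizability in Assumption \ref{goodred}(i), forces these fast modes to split symmetrically into $r$ stable and $r$ unstable directions at each end. Hence $d_s^+ + d_u^- = (n+1) + 2r$, so $\mathrm{ind}\,\cB = 0$.

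Finally, Proposition \ref{lemunifbds} shows that the kernel of $\cB$ on the weighted Sobolev space is trivial, so the index-zero operator $\cB$ is an isomorphism and a unique solution $\cU \in H^s_{\eps,\delta}$ exists for every $s \ge 2$. Taking $F \in \sS_{\eps\delta}$ and applying \eqref{unifbds83} at every order places $\cU$ in $\bigcap_s H^s_{\eps,\delta} = \sS_{\eps\delta}$. I expect the index computation to be the main obstacle: one must track the three distinct scales of eigenvalues of $\mA_\pm$ through the singular limit, separating the single $O(\eps)$ eigenvalue responsible for the shock (provided by Assumption \ref{profass}) from the $O(1)$ Chapman--Enskog eigenvalues controlled by Proposition \ref{NSprofbds} and the $O(\eta^{-1})$ fast viscous ones controlled by Assumption \ref{goodred}(i), and verifying transversality of the phase condition $\ell_\eps \cdot u(0) = 0$ to the translation kernel through the normalization \eqref{normalization}.
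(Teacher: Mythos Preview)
Your approach is essentially the paper's: reduce to a transmission problem for the first-order system \eqref{o1red}, verify hyperbolicity of $\mA_\pm$ via Lemma \ref{lem82}, check that the stable/unstable dimension count matches the $n+2r+1$ boundary conditions, and conclude injectivity (hence bijectivity) from the a~priori estimate \eqref{unifbds83}. The paper uses the conjugation lemma of \cite{MeZ1} to reduce to constant coefficients on each half-line, whereas you invoke Coppel--Palmer dichotomies directly; these are equivalent devices.

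One correction on the index step: the symmetric $r$--$r$ splitting of the fast eigenvalues does \emph{not} come from Assumption~\ref{goodred}(i), which concerns only the reduced Navier--Stokes system. It comes from the negative definiteness of $Q_{22}$ on $\VV$ (Assumption~\ref{refass}): after the change of variables and rescaling by $\eta^{1/2}$ carried out in the paper, the leading $(v,\tilde w)$ block is $\bigl(\begin{smallmatrix}0 & I\\ -Q_{22} & 0\end{smallmatrix}\bigr)$, whose eigenvalues are $\pm\sqrt{\sigma}$ with $\sigma\in\mathrm{spec}(-Q_{22})\subset(0,\infty)$. Thus the fast modes are of order $\eta^{-1/2}$, not $\eta^{-1}$, and their splitting is governed by coercivity of the linearized collision operator, not by symmetrizability of the Chapman--Enskog approximation. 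The remaining $n$ eigenvalues of $\eta^{1/2}\widetilde\mA$ perturb from zero and are shown by standard perturbation theory to track those of $A_{11}^{*\pm}=dh_*(u_\pm)$, after which the Lax count from Proposition~\ref{NSprofbds} applies exactly as you indicate.
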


\begin{proof}
 Noting that the coefficient matrix $\mA$ converges exponentially to
 $\mA_\pm$ at   $\pm\infty$,  we may apply the conjugation lemma
of \cite{MeZ1} to convert the equation \eqref{o1red}  by an asymptotically trivial 
change of coordinates $\cU=T(x)Z$ to a constant-coefficient problems
\begin{equation}\label{ccprob}
Z_-' - \mA_- Z_-=F_-,
\quad
Z_+'-\mA_+ Z_+=F_+,
\end{equation}
on $\{ \pm x \ge 0 \}$, 
with $n+2r+1$ modified boundary conditions determined by the value of the
transformation $T$ at $x=0$, where $\mA_\pm:= \mA(\pm \infty)$, and
$Z_\pm(x):= Z(  x)$ for $ \pm x>0$.

By standard boundary-value theory (see, e.g., \cite{He}), to prove existence 
and uniqueness in the Schwartz space for the   problem \eqref{o1red} on 
$\{ x < 0\}$ and $\{ x > 0 \}$ with transmission conditions \eqref{transmcond},  it is sufficient
to show that 

\quad (i) the limiting coefficient matrices $\mA_\pm$ are hyperbolic,
i.e., have no pure imaginary eigenvalues, 

\quad (ii) the number of boundary conditions is equal to the number
of stable (i.e., negative real part)
eigenvalues of $\mA_+$ plus the number of unstable eigenvalues (i.e., positive real part) of
$\mA_-$, and

\quad  (iii) there exists no nontrivial solution of the
homogeneous equation $f=0$, $g=0$.

Moreover, since the eigenvalues of $\mA_\pm$  are located in  $\{ | \re z | \ge \theta_1 \eps$, 
the conjugated form \eqref{ccprob} of the equation show that if the source term $f$ has an exponential decay  $e^{ - \eps \delta \la x \ra }$ at infinity, then the bounded solution also 
has the same exponential decay, provided that 
$\delta < \theta_1$ . Therefore, the three conditions above are also sufficient to prove existence 
and uniqueness in $\sS_{\eps \delta} $ if $\eps$ and $\delta $ are small. 

\medbreak

Note that  (i) is a consequence of Lemma~\ref{lem82}, while (iii) follows from the 
estimate \eqref{unifbds83}. 
 To verify (ii), it is enough to establish the formulae
\begin{equation}\label{dims}
\begin{aligned}
\dim \cS(\mA_\pm)&= r+ \dim \cS(A_{11}^{*\pm}),\\
\dim \cU(\mA_\pm)&= r+ \dim \cU(A_{11}^{*\pm}),
\end{aligned}
\end{equation}
where $A_{11}^{* \pm}  = dh_* (u_\pm) = A_{11} + A_{12} dv_*(u_\pm)$ 
and $\cS(M)$ and $\cU(M)$ denote the stable and unstable
subspaces of a matrix $M$. 
We note that $A_{11}^{*\pm} =dh_*(u_\pm )$ are invertible, 
with dimensions of the  stable subspace of $A_{11}^{*+}$
and the unstable subspace of $A_{11}^{*-}$ summing to 
$n+1$, by Proposition \ref{NSprofbds}.
Thus, \eqref{dims} implies that 
$$
\dim \cS(\mA_+)+ \dim \cU(\mA_-)= 2r+ \dim \cS(A_{11}^{*+})
+ \dim \cU(A_{11}^{*-})= 2r + n+1
$$
as claimed.

To establish \eqref{dims}, introduce the variable 
$\tilde v = v + Q_{22}^{-1}  Q_{21 } u $, and the variable corresponding 
to $\tilde v'$ scaled by a factor  $\eta^\mez$, that is 
$\tilde w = \eta^\mez  w + \eta^{-\mez}  Q_{22}^{-1}  Q_{21 } ( A_{11} u + A_{12} v) $. 
After this change of variables, the matrix $\mA$ it conjugated   
to $\widetilde \mA$  with 
\begin{equation}
\eta^\mez \widetilde \mA = 
\begin{pmatrix}
 0& 0 & 0\\
0 & 0 &   I\\
0  &   -  Q_{22}  & 0 \\
\end{pmatrix} +  \eta^{- \mez}\begin{pmatrix}
 A^*_{11}& A_{12} & 0\\
0 & 0 & 0 \\
O (\eta^{-\mez})        &   O (\eta^{-\mez})     & A_{22}\\
\end{pmatrix}.
\end{equation}
  From (i),  the matrix $\eta^\mez \widetilde \mA$ has no eigenvelue
  on the imaginary axis, and the number of eigenvalues in 
  $\{ \Re \lambda > 0 \}$ is independent of $\eta$, and thus can be determined
 taking $\eta$ to infinity. 
 The limiting matrix has $r$ eigenvalues in 
 $\{ \Re \lambda > 0 \}$,  $r$ eigenvalues in 
 $\{ \Re \lambda < 0 \}$   and the eigenvalue $0$ with multiplicity $n$, since  
$-Q_{22}$ has its spectrum in   $\{ \Re \lambda > 0 \}$.  
The classical perturbation theory as in \cite{MaZ1} shows that 
for $\eta^{- \mez}  $ small,   $\eta^\mez \widetilde \mA$ has 
$n $ eigenvalues of order $\eta^{- \mez} $, close to the spectrum of 
$A_{11}^* $ with error $O (\eta^{-1})$.  
 Thus, for $\eta > 0$ large,  
 $\eta^\mez \widetilde \mA$ has $r + \dim \cS (A_{11}^*)$ eigenvalue 
 in $\{ \Re \lambda < 0 \}$, proving \eqref{dims}. 
 
  The proof of the Proposition is now complete. 
\end{proof}

\subsection{Finite dimensional approximations} 
To treat the infinite-dimensional case, we proceed
by finite-dimensional approximations.
Let $\underline Q = Q_{\underline M} $
and $\underline K = K_{\underline M}$ denote the operators  
$Q_U$ and $K_U$ evaluated at the equilibrium 
$\underline M = M (\underline u)$, so that 
\begin{equation}
\label{prf1}
\underline A = (\underline A)^*, \quad 
 \underline Q = (\underline Q)^* = 
 \begin{pmatrix} 0 &0 \\ 0 & \underline Q_{22} \end{pmatrix},  
\end{equation} 
with 
\begin{equation}
\label{prf3}
 \underline Q_{22}  \ge c \Id, \quad  c > 0. 
 \end{equation}
Moreover, the Kawashima multiplier has the form 
\begin{equation}
\label{prf4}
 \underline K =  - (\underline K)^* =  \theta 
 \begin{pmatrix} \underline K_{11}  & \underline K_{12} 
  \\ \underline K_{21}  & 0  \end{pmatrix}.   
\end{equation} 
Thanks to \eqref{prf3} the condition \eqref{pr5} is satisfied for 
$\theta$ small enough as soon as 
\begin{equation}
\label{prf5}
\Re \big( \underline K_{11} \underline A_{11}  +  
\underline K_{12} \underline A_{21} \big)    \ge c \Id , \quad  c > 0. 
\end{equation}

Consider an increasing  sequence of finite dimensional subspaces 
\begin{equation}
\label{pra1}
\VV_r \subset \VV_{r+1} , \qquad  \cup \VV_r \ \mathrm{dense \ in } 
\ \VV. 
\end{equation} 
Similarly, let $\HH_r = \UU \oplus \VV_r$. 
Let $\Pi_r(U) $ denote the orthogonal  projector onto
$\HH_r$, so that
\be\label{prf6}
\Pi_r=\Pi_r^*,
\ee
 and define
\begin{equation}
\label{prf7}
A_r (U) = \Pi_r( U) A \Pi_r(U), 
\qquad  Q_r(U) =  \Pi_r (U) Q_U \Pi_r(U). 
\end{equation}

\begin{lem}
\label{lemappf}
(i)  $A_r(U) $ is symmetric.

(ii)   $ Q_r(U)$ is uniformly definite negative 
on $\{ 0 \} \oplus \VV_r$. 

(iii)  With $K_r :  \Pi_r( U) K_U  \Pi_r(U)$, the Kawashima condition 
\begin{equation}
\label{prf9} 
\Re   K_r  A_r  -  Q_r   \ge \gamma \Id  
\end{equation}
is uniformly satisfied for $U$ in a neighborhood of $\underline M$
and $r$ large enough. 
\end{lem}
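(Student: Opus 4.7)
My plan is to handle (i) and (ii) directly from the structural assumptions and to reduce (iii) to the already-established Kawashima inequality (Lemma \ref{kawa}) by exploiting the finite rank of the compensator.

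For (i), since $A$ is self-adjoint by Assumption \ref{ass1}(i) and $\Pi_r$ is orthogonal, we have $A_r^* = \Pi_r^* A^* \Pi_r^* = \Pi_r A \Pi_r = A_r$. For (ii), any $V \in \{0\} \oplus \VV_r$ belongs to $\HH_r$, so $\Pi_r V = V$ and hence $(Q_r(U) V, V)_\HH = (L_U V, V)_\HH$. Since $\PP_\UU V = 0$ and $\PP_\VV V = V$, Lemma \ref{coerc1} yields $-\Re(L_U V, V)_\HH \ge \delta \|V\|_\HH^2$ uniformly for $U$ in an $\eps_0$-ball about $\underline M$, which is the stated uniform definite negativity on $\{0\} \oplus \VV_r$.

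The substantive step is (iii), and the key observation is that the compensator $K$ built in the construction preceding Remark \ref{Kcomp} has finite rank: it is the sum $\theta(K_{11}+K_{12}+K_{21})$ with $K_{11}$ acting on the finite-dimensional space $\UU$, $K_{12}=\PP_\UU A \PP_\VV$ taking values in $\UU$, and $K_{21}=-\PP_\VV A \PP_\UU$ taking values in $\PP_\VV A(\UU)$. Hence the range of $K$ lies in the finite-dimensional subspace $\UU + \PP_\VV A(\UU)$. Choosing $r_0$ so large that $\VV_{r_0} \supset \PP_\VV A(\UU)$, we have $KW \in \HH_r$ for every $W \in \HH$ and every $r \ge r_0$.

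With this in hand the estimate reduces to a short computation. For $W \in \HH_r$,
\be
(K_r A_r W, W)_\HH
= (K \Pi_r A W, W)_\HH
= (K A W, W)_\HH - (K(\Id - \Pi_r) A W, W)_\HH.
\ee
Using skew-symmetry $K^*=-K$, the error equals $((\Id - \Pi_r) A W, K W)_\HH$, which vanishes because $(\Id - \Pi_r) A W \in \HH_r^\perp$ while $K W \in \HH_r$ by the preceding paragraph. The analogous identity $(Q_r(U) W, W)_\HH = (L_U W, W)_\HH$ is immediate from $\Pi_r W = W$. Combining, Lemma \ref{kawa} (applied at $a=U$, valid for $U$ in a neighborhood of $\underline M$) gives $\Re((K_r A_r - Q_r) W, W)_\HH \ge \gamma \|W\|_\HH^2$. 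The only technical point to watch, and the reason the argument closes uniformly in $r$, is the finite-rank construction of $K$: with a generic bounded $K$ the projection-error term would not vanish and one would need quantitative control on $(\Id - \Pi_r) K$, precisely the difficulty that Remark \ref{Kcomp}'s finite-rank construction is designed to avoid.
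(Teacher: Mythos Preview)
Your proofs of (i) and (ii) are correct and match the paper's. For (iii) your route differs from the paper's: instead of writing out the block structure of $\underline K_r \underline A_r$ and showing that its $(1,1)$-block $\underline K_{11}\underline A_{11} + \underline K_{12}\pi_r \underline A_{21}$ converges (as a map on the finite-dimensional space $\UU$) to the positive-definite limit $\underline K_{11}\underline A_{11} + \underline K_{12}\underline A_{21}$, you reduce directly to the already-established inequality of Lemma~\ref{kawa} by controlling the single projection error $(K(\Id-\Pi_r)AW,W)$. This is a genuinely cleaner reduction and makes transparent why the finite-rank construction of $K$ is the decisive structural point.

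There is, however, a small gap. You assert that one can choose $r_0$ with $\VV_{r_0}\supset \PP_\VV A(\UU)$, but hypothesis \eqref{pra1} only gives density of $\bigcup_r\VV_r$ in $\VV$, which does \emph{not} imply that any particular finite-dimensional subspace is eventually contained in some $\VV_r$. The fix is immediate and in the same spirit: since $K$ has finite-dimensional range $F$ and $\Pi_r\to\Id$ strongly, compactness of the unit ball of $F$ gives $\|(\Id-\Pi_r)|_F\|\to 0$, hence $\|(\Id-\Pi_r)K\|\to 0$ in operator norm. Writing the error term (with the sign you dropped) as $-\big((\Id-\Pi_r)AW,\,KW\big)_\HH=-\big(AW,\,(\Id-\Pi_r)KW\big)_\HH$, it is bounded by $\|A\|\,\|(\Id-\Pi_r)K\|\,\|W\|_\HH^2=o(1)\|W\|_\HH^2$, and \eqref{prf9} follows with $\gamma$ replaced by $\gamma/2$ for $r$ large. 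Alternatively, since the $\VV_r$ are at one's disposal in this construction, one may simply stipulate from the start that $\VV_1\supset \PP_\VV A(\UU)$, after which your argument goes through verbatim.
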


\begin{proof}
(i) follows by \eqref{prf6} and
symmetry of $A$ on $\HH$.
 
 On $\{ 0 \} \oplus \VV_r$, $ Q_r(U)$ is a perturbation of 
 $\pi_r \underline Q_{22} \pi_r $ where $\pi_r $ is the (usual) 
orthogonal projection onto $\VV_r$, with
 $$
 \underline \Pi_r \begin{pmatrix} u \\ v \end{pmatrix} = 
  \begin{pmatrix} u \\ \pi_r v \end{pmatrix}. 
 $$
 
 To prove \eqref{prf9}, is is sufficient to prove the property at $U = \underline M$. 
 Restricting     to $\VV_r$ by $\pi_r$,  one has 
$$
   \underline A_r  =      
 \begin{pmatrix} \underline A_{11}  & \underline A_{12} \pi_r  
  \\ \pi_r \underline A_{21}  & \pi_r \underline A_{22} \pi_r   \end{pmatrix}, 
  \qquad 
  \underline K_r  =    \theta 
 \begin{pmatrix} \underline K_{11}  & \underline K_{12} \pi_r  
  \\ \pi_r \underline K_{21}  & 0  \end{pmatrix}.   
$$
Note that  
$$ 
  \underline K_{11} \underline A_{11}  +  
\underline K_{12} \pi_r \underline A_{21}     
$$
is an $n$-dimensional perturbation of 
$ \underline K_{11} \underline A_{11}  +  
\underline K_{12}  \underline A_{21} $  whose real part is definite positive. 
Therefore, \emph{ for $r$ large enough},  
$$
\Re\big(   \underline K_{r} \underline A_r\big)_{11}=
\Re\big(   \underline K_{11} \underline A_{11}  +  
\underline K_{12} \pi_r \underline A_{21}    \big) \ge c \Id
$$
with $c$ independent of $r$.  Since 
$$
\pi_r \underline Q_{22} \pi_r \ge  c_1 \Id  \qquad \mathrm{on} \ \VV_r
$$ 
uniformly in $r$, and since the other blocks of 
$K_r A_r$  are uniformly $O(\theta)$, the condition \eqref{prf9} 
is satisfied for $r$ large enough and $\theta$ sufficiently small. 
\end{proof}

\begin{cor}
On  $\HH_r$  the 
equation 
\begin{equation}
A_r (\bar u_{NS}) \D_x U -   Q_r (\bar u_{NS}) U   = \begin{pmatrix}  
f'\\ g \end{pmatrix}  , \qquad \ell \cdot u(0) = 0 
\end{equation} 
is well posed, and there are uniform estimates in $r$, for
$r$ sufficiently large. 
\end{cor}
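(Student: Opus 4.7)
The plan is to reduce to the finite-dimensional framework of \cite{MeZ1} already invoked in Proposition \ref{viscexist}. Since $\dim \HH_r < \infty$, the projected system $A_r(\bar u_{NS})\D_x U - Q_r(\bar u_{NS})U = (f',g)^T$ with phase condition $\ell\cdot u(0)=0$ is a genuine finite-dimensional boundary-value problem on $\HH_r$. Once I verify that Assumptions \ref{ass1}, \ref{refass}, \ref{assinf3}, \ref{goodred}, and \ref{profass} all hold for the truncated triple $(A_r,Q_r,K_r)$ with constants independent of $r$ (for $r\ge r_0$), existence and uniqueness on $\HH_r$ follow from Proposition \ref{viscexist}, while the bounds of Propositions \ref{prop72}--\ref{prop73} yield the uniform estimates.

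The verification of the structural hypotheses is essentially supplied by Lemma \ref{lemappf}: symmetry of $A_r(\bar u_{NS})$ from (i); the orthogonal splitting $\HH_r=\UU\oplus\VV_r$ together with uniform negative definiteness of the linearized collision operator on the microscopic part from (ii); and the Kawashima estimate $\Re(K_r A_r - Q_r)\ge \gamma \Id$ on $\HH_r$ from (iii), with $\gamma>0$ independent of $r$ once $r\ge r_0$. The key simplifying observation is that $\UU$ is $r$-independent and contained in every $\HH_r$, so $\Pi_r$ acts as the identity on $\UU$; consequently the block $A_{11}$ is unchanged, and the Chapman--Enskog coefficients $h_*$, $b_*$ built from $A_r$, $Q_r$ and the $\VV_r$-equilibrium manifold converge to those of the full system as $r\to\infty$. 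Hence Assumptions \ref{goodred}--\ref{profass} persist uniformly in $r$ for $r$ large, and the profile $\bar u_{NS}$ itself, being intrinsic to the reduced fluid system, is independent of $r$.

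With these uniform structural bounds in hand, I rerun the energy estimates of Section \ref{energy} together with the linearized Chapman--Enskog estimate of Section \ref{linCEestimates} verbatim on $\HH_r$, and apply the viscous existence argument of Proposition \ref{viscexist} to the regularized equation $\csLe U =F$ on $\HH_r$. Since every constant appearing in the derivation of \eqref{invbdH2} and \eqref{invbdHs} depends only on the uniform quantities listed above (operator norms of $A$ and $K$, coercivity constants on $\VV$, and the spectral data for the reduced flux), the resulting bounds are uniform in $r$, and passing $\eta\to 0$ preserves them.

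The main point that requires care is the boundedness of the Kawashima multiplier $K_r=\Pi_r K \Pi_r$ together with retention of its positivity margin after truncation. Uniform norm control is automatic because the construction of Proposition \ref{Kcomp} produces a finite-rank $K$, hence $\|K_r\|\le \|K\|$; the less trivial point, namely that the lower bound \eqref{pr5} survives truncation for all sufficiently large $r$, is precisely (iii) of Lemma \ref{lemappf}, which handles the top-left block $\Re(\underline K_{11}\underline A_{11}+\underline K_{12}\pi_r\underline A_{21})$ as a finite rank perturbation of the limit, and controls the remaining blocks by smallness of $\theta$. Once this uniform Kawashima bound is in hand, the rest of the argument is mechanical repetition of the finite-dimensional analysis.
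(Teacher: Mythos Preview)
Your proposal is correct and follows the same approach as the paper, which states this result as an immediate corollary of Lemma \ref{lemappf} without explicit proof. You have correctly identified that the structural content is entirely contained in Lemma \ref{lemappf} (symmetry of $A_r$, uniform coercivity of $Q_r$ on $\VV_r$, and the uniform Kawashima bound for $K_r$), and that once these hold with $r$-independent constants, the energy and Chapman--Enskog estimates of Sections \ref{energy}--\ref{linCEestimates} and the finite-dimensional existence argument of Proposition \ref{viscexist} apply verbatim on $\HH_r$.
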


\begin{cor}
On  $\HH$  the 
equation 
\begin{equation}
A  \D_x U -   Q (\bar u_{NS}) U   = \begin{pmatrix}  
f'\\ g \end{pmatrix}  , \qquad \ell \cdot u(0) = 0 
\end{equation} 
is well posed.
\end{cor}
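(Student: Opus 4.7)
\medbreak
\noindent\textbf{Proof proposal.}
The plan is to deduce well-posedness on $\HH$ by passing to the limit in the finite-dimensional approximations on $\HH_r$ already provided by the preceding corollary, then invoking the a priori estimates of Propositions \ref{prop72}--\ref{prop73} for uniqueness.

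First I would fix $\eps$, $\delta$ small and a source $(f,g)\in H^{s+1}_{\eps,\delta}\times H^s_{\eps,\delta}$ with values in $\HH$. Since $\Pi_r\to \Id$ strongly on $\HH$ as $r\to\infty$ by density \eqref{pra1}, the projected data $(\Pi_r f,\Pi_r g)$ converge to $(f,g)$ in the relevant weighted Sobolev spaces. By the previous corollary, for all $r$ large enough there exists a unique solution $U_r\in H^s_{\eps,\delta}(\RR;\HH_r)$ of the projected problem with phase condition $\ell_\eps\cdot u_r(0)=0$, and the uniform version of the estimates in Propositions \ref{prop72}--\ref{prop73} (applied to the finite-dimensional system, whose coefficients $A_r$, $Q_r$, $K_r$ satisfy the hypotheses of Section~\ref{abframe} uniformly in $r$ by Lemma~\ref{lemappf}) yields
\[
\big\|U_r\big\|_{H^s_{\eps,\delta}}\le C\eps^{-1}\bigl(\|f\|_{H^{s+1}_{\eps,\delta}}+\|g\|_{H^s_{\eps,\delta}}\bigr)
\]
with $C$ independent of $r$.

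Next I would extract a subsequence $U_{r_k}\rightharpoonup U$ weakly in $H^s_{\eps,\delta}(\RR;\HH)$. The point is to show $U$ solves $A\partial_x U-dQ(\bar u_{NS},v_*(\bar u_{NS}))U=(f',g)^T$ on $\HH$. Since $A$ and $dQ(\bar u_{NS},v_*(\bar u_{NS}))$ are bounded on $\HH$ (after the rescaling of Section~\ref{abframe}) and $\Pi_r\to\Id$ strongly, for any test function $\phi\in \sS(\RR;\HH)$ we have
\[
(A_r\partial_x U_r,\phi)=(\partial_x U_r,A_r\phi)\longrightarrow(\partial_x U,A\phi),
\]
and similarly for the $Q_r$ term, because $A_r\phi=\Pi_r A\Pi_r\phi\to A\phi$ strongly in $L^2(\RR;\HH)$ by dominated convergence. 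Passing to the limit in the weak formulation identifies $U$ as a solution of the limit equation. The phase condition survives because $\ell_\eps\in\UU$ is finite-dimensional, so $\ell_\eps\cdot u_r(0)=\ell_\eps\cdot u(0)$ already at the level of the projection $\PP_\UU U_r$, which converges strongly at $x=0$ via the trace inequality applied to $\PP_\UU U_r$ (a function in a fixed finite-dimensional space $\UU$, where weak and strong convergence coincide).

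Uniqueness on $\HH$ is immediate: if $(f,g)=0$ and $U\in H^2_{\eps,\delta}(\RR;\HH)$ solves the homogeneous equation with $\ell_\eps\cdot u(0)=0$, then Proposition~\ref{prop72} gives $\|U\|_{H^2_{\eps,\delta}}\le 0$, hence $U\equiv 0$. The main obstacle is the limit step: one must be sure that the finite-dimensional coercivity and Kawashima estimates of Lemma~\ref{lemappf} translate into \emph{$r$-uniform} versions of the estimates of Propositions \ref{prop72}--\ref{prop73}. This is however automatic from the proofs in Sections~\ref{energy}--\ref{linCEestimates}, which rely only on the structural assumptions of Section~\ref{abframe} and the abstract constants $\delta,\gamma,\theta$ furnished uniformly by Lemma~\ref{lemappf} for $r$ large; no argument there uses dimensionality of the state space. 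Once this point is checked, the remainder is a standard compactness-and-passage-to-the-limit argument.
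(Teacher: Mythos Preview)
Your proposal is correct and follows essentially the approach the paper intends: the corollary is stated without proof immediately after the finite-dimensional corollary with uniform-in-$r$ estimates, and the implicit argument is precisely the one you give---uniform bounds, weak compactness, passage to the limit in the weak formulation using $\Pi_r\to\Id$ strongly and boundedness of $A$, $dQ$, plus uniqueness from Proposition~\ref{prop72}. One point worth making slightly more explicit is that the macroscopic (Chapman--Enskog) side of the argument, Proposition~\ref{uprop} and Assumptions~\ref{goodred}--\ref{profass}, also holds uniformly in $r$: since $\UU$ is fixed and the reduced coefficients $\bar b_*^r$, $\bar{dh}_*^r$ are built from $A_{12}\pi_r$, $(\pi_r Q_{22}\pi_r)^{-1}$, etc., they converge to $\bar b_*$, $\bar{dh}_*$ as $r\to\infty$, so for $r$ large the reduced ODE inherits the same structure and constants; this is what justifies invoking Propositions~\ref{prop72}--\ref{prop73} uniformly, not only Lemma~\ref{lemappf}.
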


\subsection{Proof of Proposition \ref{invprop}}
Let $(\csLe)^\dagger$ denote the inverse operator of 
$\csLe$ defined by \eqref{modeq}, for   sufficiently small $\eta>0$. 
The uniform bound \eqref{unifbds83}, and weak compactness of the
unit ball in $H^2$,  for $F \in \sS$, we obtain existence of a weak solution $U \in H^2$ of
\begin{equation} 
\label{eq820}
\csL U  = F:= \begin{pmatrix}f\\g\end{pmatrix}, \qquad \ell_\eps \cdot u(0 ) = 0,  
\end{equation}
 along some weakly
convergent subsequence.
Proposition~\ref{prop72} implies uniqueness in $H^2$ for this problem,  
therefore the full family converges, giving sense to the definition 
\begin{equation}
\csLd = \lim_{\eta \to 0}  (\csLe)^\dagger
\end{equation}
acting from $\sS$ to $H^2$. 

 For $F \in \cS_{\eps \delta}$, the uniform bounds \eqref{unifbds83} imply that 
 the limit $  \csLd U \in H^s_{\eps, \delta}$ and satisfies same estimate. 
 By density, the operator $\csLd$  extends to 
 $f \in H^{s+1}_{\eps, \delta}$ and $g \in H^{1}_{\eps, \delta}$, with  
 $\csLd F \in H^s_{\eps, \delta}$.

 The sharp  bound \eqref{invbdH2} and \eqref{invbdHs} 
  now follow  immediately from Propositions~\ref{prop72} and 
  \ref{prop73}. The proof of Proposition~\ref{invprop} is now complete.

%%%%%%%%%%%%%%%%%%%%%%%%%%%%%%%%%%%%%%

\section{Other norms}\label{othernorms}
%\section{General weights}\label{generalwt}

We now briefly discuss the modifications needed to 
obtain the full result of Theorem \ref{mainthm}.

\subsection{Pointwise velocity estimates}

%(TODO: just expand/smooth very slightly... KZ 6-09)DONE.

\begin{proof}[Proof of Theorem \ref{mainthm} ($\HH^{1/2}$)]
To obtain pointwise bounds with respect to velocity,
we carry out the same argument as in the proof
of Proposition \ref{main}, substituting
in place of the $L^2$ norm $|\cdot|$ in $\xi$, the
weighted $H^s$ (Sobolev) norm
$$
|f|_{s}:=
\sum_{k=0}^s C^{-k} |\D_\xi^k f|_{^2},
$$
$C>0$ sufficiently large, similarly as we did for the $x$-variable
in order to get pointwise bounds in $x$.

We have only to observe that differentiating the linearized equations in $\xi$
gives the same principal part applied to the $\xi$-derivative of $U$,
plus commuator terms.
Since commutator terms, both for the linearized collision operator
$L$ and the transport operator $A$ are of one lower derivative in
$\xi$ and also one lower factor in $\la \xi \ra$ (straightforward
computation differentiating $|\xi-\xi'|$, $\xi_1$, respectively)
for the hard-sphere case, we easily find that commutator terms
are absorbable for $C>0$ sufficiently large by lower order estimates
already carried out.

%TODO: merge discussion from failed positivity draft, separation
%into $-\nu$ and $K$ parts?... this should help the reader... NO, not
%needed. DONE.

Thus, we obtain all the same estimates as before and the argument closes
to give the same result in the stronger norm $|\cdot|_s$.
(Note: a detail is to observe that 
truncation errors of the approximate solution are of the same
order in the Sobolev norm, which follows
by the corresponding property of the Maxwellian.) 
Applying the Sobolev embedding estimate in $\xi$, we obtain
\eqref{finalbds} for $\eta=1/2$, which evidently implies the
same estimate for $\eta\ge 1/2$.
\end{proof}

\subsection{Higher weights}\label{generalwt}

%NEW CHANGED: added 2-10-09 -KZ
\begin{proof}[Proof of Theorem \ref{mainthm} ($\HH^s$)]
To extend our results from $\HH^{\frac{1}{2}}$ to $\HH^s$,
we use a simple bootstrap argument together with the key
observation that the $\HH^s$ norm of $\PP_\UU f$ is controlled
(by equivalence of finite-dimensional norms) by the $\HH^{\frac{1}{2}}$
estimates already obtained.
Namely, starting similarly as in \eqref{lineareq} with the equation
$ A\partial_x -L_a U=F$, 
$ \PP_\UU F= f$, $\PP_\VV F=g$,
we find, taking the 
$\HH^s$-inner product of $U$ against this equation and
applying the result of Proposition \ref{propcoercs} and
recalling that $A$ is formally self-adjoint in $\HH^s$,
we obtain the estimate
\begin{equation}\label{friedeq}
\| \PP_{\VV} U \|_{L^2}  \le C \big(     
\|  f  \|_{L^2}   +    \| g  \|_{L^2}   
+ \| \PP_{\UU} U \|_{L^2} \big). 
\end{equation}
Differentiating the equations $k$ times and taking the inner
product with $\partial_x^k U$, we find, similarly,
the higher-derivative estimate
\begin{equation}\label{friedeqk}
\| \PP_{\VV} U \|_{H^k_{\eps,\delta}}  \le C \big(     
\|  f  \|_{H^k_{\eps,\delta}}   +    \| g  \|_{H^k_{\eps,\delta}}   
+ \| \PP_{\UU} U \|_{H^k_{\eps,\delta}} \big). 
\end{equation}

Specializing now to the case \eqref{neweq}, \eqref{phasecond},
and bounding the $\HH^s$ norm of $\PP_\UU U$ by a constant times
the $\HH^{\frac{1}{2}}$ bound obtained already in our previous analysis,
we recover the key bounds \eqref{invbdH2}--\eqref{invbdHs}
of Proposition \ref{invprop} in the general space $\HH^s$.
With this bound, the entire contraction mapping argument goes
through in $\HH^s$, since this relies only on boundedness estimates
on $A$, $Q$ already obtained, the estimate \eqref{redbds} (still valid in 
$\HH^s$), and the linearized estimates \eqref{invbdH2}--\eqref{invbdHs},
yielding \eqref{finalbds}(i) and (ii) as claimed,
%Applying the Sobolev embedding estimate in $\xi$, we obtain \eqref{finalbds} 
for any $\eta >0$.

The estimate \eqref{finalbds}(iii) then follows by
Remark \ref{fastdecay} estimating decay in velocity $\xi$ of
the approximating profile $\bar f_{NS}$.
\end{proof}

\begin{rem}\label{nonsa}
\textup{
We emphasize that $L$ is {\it not} approximately self-adjoint with respect
to $\HH^s$, $s >> 1/2$, and, likewise,
 the splitting $\HH^s = \UU \oplus \VV^s$ using projectors 
 $\PP_U$ and $\PP_V$ is not orthogonal in this norm.
For this reason, we obtain term
$ \| \PP_{\UU} U \|_{H^k_{\eps,\delta}}$ in the righthand side of
\eqref{friedeqk} and not 
$\eps \| \PP_{\UU} U \|_{H^k_{\eps,\delta}}$ as in the $\HH^{1/2}$
computations.
The missing $\eps$ factor was crucial in closing the argument in $\HH^{1/2}$
and estimating $ \PP_{\UU} U $.
However, with $ \PP_{\UU} U $ already bounded it is no longer needed,
since our final estimates make no distinction between
$ \PP_{\UU} U $ and $ \PP_{\VV} U $ components; that is, the lost
$\eps$ factor is needed only to close the loop between microscopic
and macroscopic estimates, and not to
bound $\PP_{\VV}U$ in terms of $\PP_{\UU}U$.
}
\end{rem}

\section{Other potentials}\label{genpot}
%\section{General potentials}\label{genpot}

Finally, we briefly indicate the changes needed to
accomodate general hard cutoff potentials.
Recall \cite{CN,GPS} that these give structure $L=-\nu(\xi)+\cK$,
where $\nu \sim \la \xi\ra^{\beta}$, $0<\beta <1$, and $\cK$ is
compact, and similarly for $Q$.
Dividing by $\nu \sim \la \xi\ra^{\beta}$ as before, we can
thus obtain $Q$, $L$ bounded, but this leaves $A$ unbounded.  
Nonetheless, a closer look shows that the Kawashima compensator
$K$ as constructed is still bounded, the key point.  
For, examining $A_{12}$, we see that it decays as a polynomial
in $\xi$ times full Maxwellian, so is clearly bounded in $\HH^s$
for $s<1$.

Since the norm of $A$ does 
not enter except through the good term $KA$, our basic micro-estimates 
therefore still survive.  
Of course, the macro-estimates, since finite-dimensional,
survive as well.  (This follows by the same estimate
that shows that $K$ as constructed is bounded; that is, one has only to
check that $A_{12}$ and $A_{21} $ entries remain bounded,
thanks to Maxwellian rate decay.)
Thus, the argument goes through as before, also for these more
general potentials.

%%%%%%%%%%%%%%%%%%%%%%%%%%%%%%%%%%%%%%

\end{document}